\newcommand{\address}[1]{\gdef\@address{#1}}
\newcommand{\email}[1]{\gdef\@email{\url{#1}}}
\newcommand{\@endstuff}{\par\vspace{\baselineskip}\noindent\small
\begin{tabular}{@{}l}\scshape\@address\\\textit{E-mail address:} \@email\end{tabular}}
\author{Damien Junger\footnote{This work has been written in a great part during the author PhD thesis at the ENS Lyon. His work are currently funded by the Deutsche Forschungsgemeinschaft (DFG, German Research Foundation) under Germany's Excellence Strategy EXC 2044–390685587, Mathematics Münster: Dynamics–Geometry–Structure.}}
\address{Mathematisches Institut, Universität Münster,\\ Fachbereich Mathematik und Informatik der Universität Münster,  Orléans-Ring 10, 48149 Münster, Germany.}
\email{djunger@uni-muenster.de}
\title{Cohomologie de de Rham du revêtement modéré de la tour de Lubin-Tate}
\newtheorem{theointro}{Th\'eor\`eme}
\newtheorem{theo}{Th\'eor\`eme}[section]
\newtheorem{lem}[theo]{Lemme}
\newtheorem{lemintro}[theointro]{Lemme}
\newtheorem{prop}[theo]{Proposition}
\newtheorem{propintro}[theointro]{Proposition}
\theoremstyle{definition}
\newtheorem{defi}[theo]{D\'efinition}
\newtheorem{claim}[theo]{Fait}
\theoremstyle{remark}
\newtheorem{rem}[theo]{Remarque}
\DeclareMathOperator{\cind}{c-ind}
\DeclareMathOperator{\ind}{ind}
\DeclareMathOperator{\card}{Card}
\DeclareMathOperator{\spec}{Spec}
\DeclareMathOperator{\spf}{Spf}
\DeclareMathOperator{\spg}{Sp}
\DeclareMathOperator{\mat}{M}
\DeclareMathOperator{\gln}{GL}
\DeclareMathOperator{\pgln}{PGL}
\DeclareMathOperator{\dl}{DL}
\DeclareMathOperator{\stab}{Stab}
\DeclareMathOperator{\fix}{Fix}
\DeclareMathOperator{\hhh}{H}
\DeclareMathOperator{\wwwww}{W}
\DeclareMathOperator{\homm}{Hom}
\DeclareMathOperator{\aut}{Aut}
\DeclareMathOperator{\gal}{Gal}
\DeclareMathOperator{\id}{Id}
\DeclareMathOperator{\tr}{Tr}
\DeclareMathOperator{\nr}{Nr}
\DeclareMathOperator{\lt}{LT}
\DeclareMathOperator{\coker}{coker}
\DeclareMathOperator{\jl}{JL}
\DeclareMathOperator{\gr}{Gr}
\DeclareMathOperator{\art}{Art}
\DeclareMathOperator{\bl}{Bl}
\DeclareMathOperator{\proj}{Proj}
\DeclareMathOperator{\sym}{Sym}
\DeclareMathOperator{\codim}{codim}
\DeclareMathOperator{\Irr}{Irr}
 \newcommand{\iso}{\stackrel{\sim}{\fl}}
\font\tengoth=eufb10
\font\sevengoth=eufb7
\font\fivegoth=eufb5
\def\A{{\mathbb{A}}}
\def\B{{\mathbb{B}}}
\def\F{{\mathbb{F}}}
\def\H{{\mathbb{H}}}
\def\M{{\mathbb{M}}}
\def\N{{\mathbb{N}}}
\def\P{{\mathbb{P}}}
\def\Q{{\mathbb{Q}}}
\def\Z{{\mathbb{Z}}}
\def\BC{{\mathcal{B}}}
\def\CC{{\mathcal{C}}}
\def\FC{{\mathcal{F}}}
\def\HC{{\mathcal{H}}}
\def\GC{{\mathcal{G}}}
\def\MC{{\mathcal{M}}}
\def\OC{{\mathcal{O}}}
\def\TC{{\mathcal{T}}}
\def\XC{{\mathcal{X}}}
\def\YC{{\mathcal{Y}}}
\def\SG{{\mathfrak{S}}}
\def\XG{{\mathfrak{X}}}
\def\mG{{\mathfrak{m}}}
\def\If{{\mathscr{I}}}
\def\Jf{{\mathscr{J}}}
\def\Nf{{\mathscr{N}}}
\def\Of{{\mathscr{O}}}
\def\Pf{{\mathscr{P}}}
\def\bar#1{\overline{#1}}
\def\hetc#1{{\rm H}^{#1}_{{\rm ét},c}}
\def\hdr#1{{\rm H}^{#1}_{\rm dR}}
\def\hdrc#1{{\rm H}^{#1}_{{\rm dR},c}}
\def\hrig#1{{\rm H}^{#1}_{\rm rig}}
\def\hrigc#1{{\rm H}^{#1}_{{\rm rig},c}}
\def\et{\text{ et }}
\def\si{\text{Si }}
\def\sinon{\text{Sinon}}
\def\and{\text{ and }}
\def\avec{\text{ avec }}
\def\fl{\rightarrow}
\def\fln#1#2{\xrightarrow[#2]{#1}}
\def\inter#1#2#3{\bigcap\limits_{{\substack{#2}}}^{#3}{#1}}
\def\uni#1#2#3{\bigcup\limits_{{\substack{#2}}}^{#3}{#1}}
\begin{document}
\maketitle

\begin{abstract}
In this article, we study the De Rham cohomology of the first cover in the Lubin-Tate tower. In particular, we get a purely local proof that the supercuspidal part realizes the local Jacquet-Langlands correspondence for ${\rm GL}_n$ by comparing it to the rigid cohomology of some Deligne-Lusztig varieties. The representations obtained are analogous to the ones appearing in the $\ell$-adic cohomology if we forget the action of the Weil group. The proof relies on the generalization of an excision result of Grosse-Kl\"onne and on the existence of a semi-stable model constructed by Yoshida for which we give a more explicit description.
\end{abstract}

\tableofcontents
 
\section{Introduction}

Cet article fait suite à \cite{J4} où l'on détermine la partie supercuspidale de la cohomologie de Rham pour le premier revêtement de la tour de Drinfeld. Dans les articles fondateurs \cite{lt1,lt2,dr1,dr2}, deux tours de revêtements rigides-analytiques  $(\MC_{LT}^n)_n$ ont été construits dont la cohomologie étale $l$-adique a un lien profond avec les correspondances de Jacquet-Langlands et de Langlands locales. La première, la tour de Drinfeld, est une famille dénombrable de revêtements successifs de l'espace symétrique de Drinfeld à savoir $\P_K^d \backslash \bigcup_{H \in \HC} H$ le complémentaire dans l'espace projectif sur $K$ une extension finie de $\Q_p$ des hyperplans $K$-rationnels. La seconde, la tour de Lubin-Tate, constitue une famille de revêtements sur la boule unité rigide $\mathring{\B}_K$. L'étude des propriétés cohomologiques de ces espaces a culminé dans les travaux \cite{dr1, cara3, harrtay, falt, fargu, dat1, dat0, mied, Scholze1}...  où il a été montré que la partie supercuspidale de la cohomologie $l$-adique réalisait à la fois les correspondances de  Jacquet-Langlands et de Langlands locale. Le but de ce travail ainsi que de \cite{J4} est d'obtenir des résultats similaires pour les cohomologies $p$-adiques avec l'espoir d'exhiber des versions $p$-adiques des correspondances  de Langlands locale  encore conjecturales. En guise de première étape à l'établissement de ce programme conséquent, nous nous concentrons uniquement dans ces travaux sur la cohomologie de de Rham où nous nous attendons à obtenir les mêmes représentations que pour la cohomologie $l$-adique où l'on a oublié l'action du groupe de Weil. En particulier, nous pouvons seulement exhiber la correspondance de  Jacquet-Langlands. 

Même si certaines méthodes en $l$-adique ne peuvent être adaptées à l'étude que nous voulons mener et si la cohomologie de de Rham de l'ensemble des deux tours semble inaccessible pour le moment, quelques cas particuliers ont été établis comme le cas de dimension $1$ du coté Drinfeld (quand $K=\Q_p$) \cite{brasdospi}, ($K$ quelconque)\cite{GPW1}, du coté Lubin-Tate \cite{GPW1} et récemment en dimension quelconque pour le premier revêtement du côté Drinfeld \cite{J4} (voir aussi \cite{scst,iovsp,ds} pour le niveau $0$ du côté Drinfeld\footnote{En niveau $0$ du coté Lubin-Tate, le résultat est immédiat car la boule unitée ouverte  rigide n'a pas de cohomologie.}). Dans cet article, nous prouvons le résultat pour le premier revêtement du coté Lubin-Tate. L'espoir de prolonger les résultats du côté Drinfeld aux espaces analogues du coté Lubin-Tate est motivé par le lien profond qui existe entre la géométrie des deux tours. Ce lien s'illustre dans les travaux \cite{falt2, fafa} où il a été montré que chaque tour en niveau infini à une structure d'espace perfecto\"ide et que ces deux espaces sont isomorphes.

Avant d'énoncer le résultat principal de cet article, introduisons les représentations qui apparaissent dans l'énoncé. Soit $K$ une extension finie de $\Q_p$ d'anneau des entiers $\OC_K$, d'uniformisante $\varpi$  et de corps résiduel $\F=\F_q$. Notons $C=\widehat{\bar{K}}$ le complété d'une clôture alg\'ebrique de $K$ et $\breve{K}= \widehat{K^{nr}}$ le compl\'eté de l'extension maximale non ramifi\'ee dans $\bar{K}$.  On note $(\MC_{LT}^n)_n$ les revêtements   de dimension  $d$ sur $\breve{K}$ de la tour de Lubin-Tate. Chacun de ces espaces rigide-analytiques admettent des actions des groupes $\gln_{d+1}(\OC_K)=G^\circ$, $D^*$ avec $D$ l'algèbre à division sur   $K$ de dimension $(d+1)^2$ et d'invariant $1/(d+1)$, $W_K$ le groupe de Weil de $K$ qui commute entre elles. De plus, ces revêtements se décompose sous la forme $\MC_{LT}^n=\lt^n\times\Z$ et on peut aisément relier les cohomologie de $\MC_{LT}^n$ et de $\lt^n$.

  Étant donné un caractère primitif $\theta$ du groupe $\F_{q^{d+1}}^*$, on peut lui associer les représentations suivantes :

\begin{enumerate}
\item Sur le groupe $\gln_{d+1}(\F_q)$, $\dl(\theta)$ sera la repr\'esentation cuspidale associ\'ee \`a $\theta$ via la correspondance de Deligne-Luzstig. On pourra la voir comme une repr\'esentation de $G^\circ $ par inflation. 
\item $\tilde{\theta}$ sera le caract\`ere de $I_K (\varphi^{d+1})^{\Z}\subset W_K$ par le biais de $I_K \to I_K/ I_{K_N}\cong \F_{q^{d+1}}^*\xrightarrow{\theta} \widehat{\bar{K}}^*$ avec  $K_N=K(\varpi^{1/N})$. On impose de plus $\tilde{\theta}(\varphi^{d+1})=(-1)^d q^{\frac{d(d+1)}{2}}$.  
\end{enumerate}
Nous donnons maintenant les repr\'esentations associ\'ees sur les groupes $D^*$, $W_K$. Posons :
\begin{enumerate}
\item $\rho(\theta):= \cind_{\OC_D^*\varpi^\Z}^{D^*} \theta$ o\`u $\theta$ est vue comme une $\OC_D^* \varpi^{\Z}$-repr\'esentation via $\OC_D^* \varpi^{\Z} \to \OC_D^* \to \F_{q^{d+1}}^*$. 
\item $\wwwww(\theta):= \cind_{I_K (\varphi^{d+1})^{\Z}}^{W_K} \tilde{\theta}$. 
\end{enumerate}

Le théorème principal de l'article est le suivant :
\begin{theointro}
\label{theointroprinc}
  Pour tout caractère primitif $\theta: \F_{q^{d+1}}^*\to C^*$, il existe des isomorphismes de $G^\circ$-représentations 
 \[\homm_{D^*}(\rho(\theta), \hdrc{i}((\MC_{LT, C}^1/ \varpi^{\Z}))){\cong} \begin{cases} \dl(\theta)^{d+1} &\text{ si } i=d \\ 0 &\text{ sinon} \end{cases}.\]

\end{theointro}
Rappelons que les représentations $\rho(\theta)$ considéré parcourt l'ensemble des représentations irréductibles de niveau $0$ de $D^*$ de caract\`ere central trivial sur $\varpi^{\Z}$ dont l'image par la correspondance de Jacquet-Langlands est   supercuspidale. 
 
Expliquons dans les grandes lignes la stratégie de la preuve. L'un des grands avantages à travailler sur les premiers revêtements de chacune des tours est que la géométrie des deux espaces considérés reste encore accessible. Cela nous permet alors d'espérer pouvoir appliquer les arguments purement locaux  des thèses de \cite{yosh} (côté Lubin-Tate) et \cite{wa} (côté Drinfeld). En niveau supérieur, la géométrie se complique grandement et nous doutons qu'une stratégie similaire puisse fonctionner. Du côté Drinfeld, cela s'illustre par l'existence d'une équation globale au premier revêtement (voir \cite{J3}). Du côté Lubin-Tate, la situation est encore meilleure car on a l'existence d'un modèle semi-stable généralisé (voir plus bas) construit par Yoshida. Cette propriété jouera un rôle clé dans la stratégie que nous avons entrepris et nous expliquons maintenant son utilité.

Soit $\XC$ un schéma formel $p$-adique, il est de réduction semi-stable généralisée si, Zariski-localement sur $\XC$, on peut trouver un morphisme étale vers ${\rm Spf}(\OC_K\langle X_1,...,X_n\rangle/
   (X_1^{\alpha_1}...X_r^{\alpha_r}-\varpi))$ pour certains $r\leq n$ et $\alpha_i\geq 1$ (ou $\varpi$ est une uniformisante de $K$).  Observons que pour tout schéma formel $\XC$ vérifiant cette propriété, l'anneau local complété en un point fermé est de la forme 
\[
\OC_{\breve{K}} \llbracket T_0,\ldots, T_d  \rrbracket /(T_0^{e_0}\cdots T_r^{e_r}- \varpi )
\]
avec $r\leq d$ et $e_i$ premier à $p$. 

Nous dirons alors que $\XG$ est ponctuellement semi-stable généralisé si pour tous point fermés, l'anneau local complété est de cette forme. Il s'agit d'une notion un peu plus faible car on peut s'autoriser des espaces qui ne sont pas $p$-adiques\footnote{Par exemple, $\spf(\OC_{\breve{K}} \llbracket T_0,\ldots, T_d  \rrbracket /(T_0^{e_0}\cdots T_r^{e_r}- \varpi ))$ est ponctuellement semi-stable généralisé mais pas semi-stable généralisé.}. 
Le résultat suivant qui a en premier été prouvé par Grosse-Kl\"onne  dans \cite[Theorem 2.4.]{GK1} pour le cas semi-stable puis dans \cite[Théorème 5.1.]{J4} pour le cas semi-stable généralisé, est le point de départ de notre argument.
\begin{propintro}

Étant donné un schéma formel  semi-stable généralisé $\XC$  avec pour décomposition en composantes irréductibles $\XC_s=\bigcup_{i\in I} Y_i$, pour toute partie finie $J\subset I$, la flèche naturelle de restriction 
\[\hdr{*} (\pi^{-1}(]Y_J[_\XC))\fln{\sim}{} \hdr{*} (\pi^{-1}(]Y_{J}^{lisse}[_\XC))\]
est un isomorphisme pour $Y_J=\inter{Y_j}{j\in J}{}$ et $Y_J^{lisse}=Y_{J}\backslash \bigcup_{i\notin J}Y_i$.

\end{propintro}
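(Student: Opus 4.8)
La preuve reprend la stratégie de Grosse-Klönne dans \cite{GK1} (étendue au cas semi-stable généralisé dans \cite{J4}) ; en voici les grandes lignes.

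\textbf{Réduction à un modèle local.} L'énoncé est local sur $\XC$ pour la topologie de Zariski — les deux membres se recollent le long d'un recouvrement et la formation des tubes, de $Y_J$ et de $Y_J^{lisse}$ y est locale — et insensible à un changement de base étale sur $\XC$ ; on peut donc supposer $\XC = \spf\big(\OC_K\langle X_1,\dots,X_n\rangle/(X_1^{a_1}\cdots X_r^{a_r}-\varpi)\big)$, $Y_i=V(X_i)$ pour $1\le i\le r$, $I=\{1,\dots,r\}$ et $J\subseteq I$ (seules interviennent les composantes rencontrant $Y_J$). Le cas $J=I$ étant trivial, on suppose $J\subsetneq I$. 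En explicitant le tube de Berthelot dans la fibre générique rigide, $\pi^{-1}(]Y_J[_\XC)$ est le lieu $\{|X_j|<1 : j\in J\}$, et $\pi^{-1}(]Y_J^{lisse}[_\XC)$ s'en déduit en imposant de plus $|X_i|=1$ pour $i\in I\setminus J$. Quitte à localiser davantage pour la topologie étale, la localisation rationnelle qui inverse les $X_j$, $j\in J$ (licite car $|X_j|$ y est minoré par $|\varpi|^{1/a_j}$), identifie $\pi^{-1}(]Y_J[_\XC)$ à un revêtement fini étale — provenant de $\pi$ et des exposants $a_i$ — d'un produit
\[
A_J \times \B^{n-r} \times \{\,|X_i|=1 : i\in I\setminus J\,\},
\]
où $A_J = \{\,\prod_{j\in J}|X_j|^{a_j}\in[\,|\varpi|\,,1)\,\}$ est une couronne tordue semi-ouverte, le deuxième facteur étant un polydisque et le troisième un produit de cercles $\{|X_i|=1\}$ ; sous cette identification, $\pi^{-1}(]Y_J^{lisse}[_\XC)$ correspond à remplacer $A_J$ par son bord fermé $\partial A_J = \{\,\prod_{j\in J}|X_j|^{a_j}=|\varpi|\,\}$.

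\textbf{Le calcul local.} Il reste à voir que la restriction $\hdr{*}(A_J\times Z)\to\hdr{*}(\partial A_J\times Z)$ est un isomorphisme pour $Z$ produit d'un polydisque et d'un produit de cercles, puis à descendre le long du revêtement fini étale (qui remplace chaque facteur par une réunion disjointe de facteurs du même type). Toutes ces cohomologies étant prises au sens surconvergent de Grosse-Klönne, la formule de Künneth, le lemme de Poincaré pour le polydisque surconvergent ($\hdr{i}$ vaut le corps de base si $i=0$, $0$ sinon) et le calcul de la cohomologie d'un produit de cercles (algèbre extérieure sur les classes $\tfrac{dX_i}{X_i}$) ramènent à la seule assertion que $\hdr{*}(A_J)\to\hdr{*}(\partial A_J)$ est un isomorphisme. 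Pour $|J|=1$, c'est le calcul classique : une couronne semi-ouverte munie de son faisceau structural surconvergent a, via la restriction, la même cohomologie de de Rham que son cercle-bord ; seule $\tfrac{dX}{X}$ survit, tout autre monôme $X^m\tfrac{dX}{X}$ ($m\neq 0$) étant exact des deux côtés, la convergence des primitives résultant de la surconvergence aux deux extrémités. Pour $|J|>1$, on procède par dévissage sur $|J|$ : en isolant une variable, une projection exhibe $A_J$ (resp.\ $\partial A_J$) comme un espace fibré dont les fibres sont des couronnes semi-ouvertes (resp.\ leurs bords fermés), et l'on applique fibre par fibre le cas $|J|=1$.

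\textbf{Globalisation.} Les composantes $Y_i$ ($i\notin J$) rencontrant $Y_J$ étant localement en nombre fini, un argument de Mayer-Vietoris ramène l'énoncé global au calcul local par récurrence finie sur leur nombre : à chaque étape on supprime un tube $\pi^{-1}(]Y_{J\cup\{i\}}[_\XC)$, ce qui ne change pas la cohomologie de de Rham puisque, localement, l'ouvert de recollement est un fibré en couronnes semi-ouvertes au-dessus de ce tube, que l'on rétracte — au sens de la cohomologie de de Rham — sur son bord. L'obstacle principal reste le calcul de la deuxième étape sur les couronnes tordues — contrôler uniformément la combinatoire des exposants $a_i$ et son interaction avec les revêtements de Kummer induits par $\pi$ —, ainsi que la compatibilité des structures surconvergentes des deux membres le long de la comparaison ; pour $a_i=1$ il s'agit du Théorème 2.4 de \cite{GK1}, et le cas général du Théorème 5.1 de \cite{J4}.
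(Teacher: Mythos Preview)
Le papier ne démontre pas cet énoncé dans le corps du texte : sa preuve (Théorème~\ref{theoexcision}) se réduit à un renvoi à \cite[Théorème~5.1]{J4}, qui généralise \cite[Theorem~2.4]{GK2} au cas semi-stable généralisé. Ta phrase finale invoque exactement ces références, et sur ce point tu es en accord avec le papier.

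Ton esquisse contient en revanche une erreur concrète dans la description locale. Tu affirmes qu'après localisation étale, $]Y_J[$ s'identifie à un produit $A_J\times\B^{n-r}\times\{|X_i|=1:i\in I\setminus J\}$ ; mais sur $]Y_J[$ les variables $X_i$ pour $i\in I\setminus J$ vérifient seulement $|X_i|\le 1$, pas $|X_i|=1$ --- cette dernière condition est précisément ce qui découpe $]Y_J^{lisse}[$ dans $]Y_J[$. Avec ton identification, les deux espaces coïncideraient et il n'y aurait rien à démontrer. De même, ta minoration « $|X_j|$ minoré par $|\varpi|^{1/a_j}$ » est fausse dès que $|J|\ge 2$ : seule la quantité $\prod_{j\in J}|X_j|^{a_j}$ est minorée par $|\varpi|$, pas chaque facteur. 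La structure effective de l'argument dans \cite{GK2} et \cite{J4} n'est pas une réduction à un unique calcul de produit, mais une récurrence globale retirant les composantes $Y_i$ ($i\notin J$) une à une, chaque pas reposant sur un résultat d'invariance homotopique pour des fibrations en couronnes ouvertes ; ton étape~3 y fait allusion, mais elle ne s'articule pas de façon cohérente avec la décomposition en produit (erronée) de l'étape~1, ni avec l'affirmation trop rapide que « l'énoncé est local pour la topologie de Zariski ». Enfin, le morphisme $\pi$ de l'énoncé n'est défini nulle part dans le papier et semble être un artefact rédactionnel ; ta lecture comme revêtement fini étale, bien que compatible avec la version plus générale de \cite{J4}, n'est pas justifiée par le contexte immédiat.
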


Ce résultat nous permet de mettre en place l'heuristique suivante. Étant donné un schéma formel semi-stable généralisé, on a recouvrement de la fibre générique par les tubes des composantes irréductibles $(]Y_i[)_{i\in I}$  de la fibre spéciale. On veut alors appliquer la suite spectrale de Cech à ce recouvrement. On se ramène alors à calculer les cohomologies des intersections $]Y_J[$ pour $J\subset I$ et donc de $]Y_J^{lisse}[$. La géométrie de ces derniers plus simples et les exemples apparaissant dans cet article  admettent par exemple des modèles entiers lisses ce qui permet de calculer leurs cohomologies par théorème de comparaison avec la cohomologie rigide de leur fibre spéciale. 

Pour pouvoir mettre en place cette stratégie dans ce cas, nous devons comprendre le modèle semi-stable construit par Yoshida  et décrire explicitement les différentes composantes irréductibles de la fibre spéciale. En fait, le premier revêtement admet un modèle naturel $Z_0$ provenant de son interprétation modulaire. Ce dernier n'est pas encore semi-stable et nous y résolvons les singularités en éclatant successivement des fermés bien choisis. Plus précisément, il construit une famille de fermés dans la fibre spéciale $(Y_M)_M$ indexés par les sous-espaces vectoriels propres $M\subsetneq \F^{d+1}$ qui vérifie $Y_M\cap Y_N=Y_{M\cap N}$. Lorsque $M$ parcourt l'ensemble des hyperplans, les fermés $Y_M$ décrivent l'ensemble des composantes irréductibles de $Z_0$ et on peut exhiber une stratification de la fibre spéciale par les fermés
\[
Y^{[h]}:=\bigcup_{N :\dim N=h} Y_N
\]
On construit alors une suite de modèles $Z_0, \cdots, Z_d$ en éclatant successivement le long des fermés de cette stratification. De plus, à chaque modèle, on peut construire de manière similaire une famille de fermés $(Y_{M,h})_h$ de $Z_h$  avec $Y_{M,h}=Y_M$ en prenant des transformées propres ou strictes suivant les cas (voir la définition \ref{defitrans} pour plus de précisions). Cette famille est essentielle pour comprendre la géométrie de la fibre spéciale et le résultat suivant illustre ce principe.
\begin{theointro}
On a les points suivants :
\begin{enumerate}
\item $Z_d$ est ponctuellement semi-stable généralisé.
\item Les composantes irréductibles de la fibre spéciale de $Z_h$ sont les fermés  de dimension $d$ suivant   $(Y_{M,h})_{M: \dim M\in \left\llbracket 0,h-1\right\rrbracket\cup\{d\}}$. 
\item Les intersections non-vides  de composantes irréductibles de $Z_h$ sont en bijection avec les drapeaux $M_1\subsetneq \cdots \subsetneq M_k$  tels que $\dim M_{k-1}< h$ via l'application \[M_1\subsetneq \cdots \subsetneq M_k \mapsto \bigcap_{1\leq i \leq k} Y_{M_i,h}.\]
\item  Si $Y_{M,h}$ est une composante irréductible de $Z_{h,s}$ avec $\dim M\neq d$, alors les morphismes naturels $\tilde{p}_i$ avec $i=h+1,\ldots,  d$ induisent  des isomorphismes $  Y_{M,d}^{lisse}  \cong\cdots \cong Y_{M,h+1}^{lisse} \cong Y_{M,h}^{lisse}$. 
\item Le changement de base du tube $]Y_{\{0\},d}^{lisse}[_{Z_d} \otimes \breve{K}(\varpi_N)\subset {\rm LT}_1 \otimes \breve{K}(\varpi_N)$ admet un modèle lisse dont la fibre spéciale est isomorphe à la variété de Deligne-Lusztig $\dl_{\bar{\F}}$ associée à $\gln_{d+1}$ et à l'élément de Coxeter $(1,\cdots, d)\in \SG_d$ (ici, $N=q^{d+1}-1$ et $\varpi_N$ est le choix d'une racine $N$-ième de $\varpi$).
\end{enumerate}

\end{theointro}

Grâce à ce résultat et au théorème d'excision cité précédemment, nous pouvons alors prouver :
\begin{propintro}

On a un isomorphisme $\gln_{d+1}(\OC_K)$-équivariant :
\[\hdr{*} (\lt^1/\breve{K}_N)\cong \hrig{*} (\dl^d_{\bar{\F}}/\breve{K}_N)\] Par dualité de Poincaré, on a un isomorphisme semblable pour les cohomologies à support compact.

\end{propintro}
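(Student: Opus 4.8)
The plan is to realize the de Rham cohomology of $\lt^1/\breve{K}_N$ via the semi-stable model $Z_d$ and a Čech/Mayer–Vietoris argument applied to the cover of the generic fiber by the tubes of the irreducible components of the special fiber, then to identify the resulting complex with the analogous Čech computation of the rigid cohomology of $\dl^d_{\bar{\F}}$. Concretely, I would first take the open cover $\big(\pi^{-1}(]Y_{H,d}[_{Z_d})\big)_{H}$ of $(\lt^1/\breve{K}_N)$ indexed by the hyperplanes $H \subsetneq \F^{d+1}$ (the components of $Z_{d,s}$ with $\dim M = d$, which are the ``new'' components), together with the components $Y_{M,d}$ with $\dim M \in \{0,\dots,d-1\}$ — actually, by point (2) of the geometric theorem the full list of components is $(Y_{M,d})_{\dim M \in \{0,\dots,d-1\}\cup\{d\}}$, and by point (3) the nonempty intersections are indexed by flags $M_1 \subsetneq \cdots \subsetneq M_k$ with $\dim M_{k-1} < d$. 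This combinatorial indexing by flags is exactly the combinatorics governing the rigid cohomology of the Deligne–Lusztig variety $\dl^d$ for $\gln_{d+1}$ with Coxeter element, via the standard stratification of $\dl$ by the Bruhat-type cells / the cover coming from the building.

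Next I would use the excision Proposition quoted above to replace, in the Čech spectral sequence, each $\hdr{*}(\pi^{-1}(]Y_{J,d}[_{Z_d}))$ by $\hdr{*}(\pi^{-1}(]Y^{lisse}_{J,d}[_{Z_d}))$, where $Y^{lisse}_{J,d} = Y_{J,d}\setminus\bigcup_{i\notin J}Y_{i,d}$. By point (4) of the geometric theorem these smooth loci are, via the morphisms $\tilde p_i$, isomorphic to $Y^{lisse}_{M,h}$ at earlier levels, and crucially point (5) identifies $]Y^{lisse}_{\{0\},d}[_{Z_d}\otimes\breve K_N$ with a space admitting a smooth integral model whose special fiber is $\dl_{\bar{\F}}$ for the Coxeter element. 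For the general stratum $Y^{lisse}_{M,d}$ one expects, by the product/inductive structure of the stratification (the stabilizer of a flag acts and the stratum fibers over a smaller Deligne–Lusztig variety times an affine space or torus), a smooth integral model whose special fiber is the corresponding locally closed Deligne–Lusztig stratum; then the comparison theorem between de Rham cohomology of a smooth rigid space with smooth integral model and the rigid cohomology of its special fiber (Grosse-Klönne, or Berthelot) identifies $\hdr{*}(\pi^{-1}(]Y^{lisse}_{J,d}[))$ with $\hrig{*}$ of the matching stratum of $\dl^d_{\bar{\F}}$. Assembling these term-by-term identifications, the Čech double complex computing $\hdr{*}(\lt^1/\breve K_N)$ becomes isomorphic, $\gln_{d+1}(\OC_K)$-equivariantly, to the Čech/stratification double complex computing $\hrig{*}(\dl^d_{\bar{\F}}/\breve K_N)$, which gives the desired isomorphism. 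The statement for compactly supported cohomology then follows from Poincaré duality on both sides (the rigid spaces involved are smooth and the Deligne–Lusztig variety is smooth), as asserted.

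The main obstacle I expect is the precise control of the smooth models of the \emph{general} strata $]Y^{lisse}_{M,d}[$ for $0 < \dim M < d$ and the verification that their special fibers match the intermediate strata of $\dl^d_{\bar{\F}}$ compatibly with all the restriction maps — i.e. checking that the isomorphisms of point (4)/(5) and the comparison isomorphisms are natural enough to glue into an isomorphism of the two full Čech complexes, not merely a term-by-term agreement of cohomology groups. Getting the $\gln_{d+1}(\OC_K)$-equivariance threaded through every identification (the action permutes the $Y_{M,h}$ according to its action on subspaces $M$, hence on flags, which must match the action on the stratification of $\dl^d$) is the delicate bookkeeping. A secondary technical point is handling the base change to $\breve K_N$: one must check that adjoining $\varpi_N = \varpi^{1/N}$ with $N = q^{d+1}-1$ is exactly what is needed to split the relevant ramification and make the models smooth (this is where point (5) is used), and that the cohomology of $\MC_{LT}^1$ relates to that of $\lt^1$ in the stated way via the decomposition $\MC_{LT}^1 = \lt^1\times\Z$. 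Once the term-wise comparison and its naturality are in hand, the spectral sequence argument and the Poincaré duality statement are formal.
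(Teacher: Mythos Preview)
Your plan overcomplicates the argument and misses the key geometric observation that makes the paper's proof a one–line application of excision. Since $Z_0=\spf(A_1)$ is local, its unique closed point is $Y_{\{0\}}$, and by the lemma one has $Y_{\{0\},d}=p_d^{-1}(Y_{\{0\}})$. Consequently the tube $]Y_{\{0\},d}[$ over this \emph{single} component is all of the generic fibre $\lt^1$. The paper then applies the excision theorem once, to the component $Y_{\{0\},d}$, obtaining
\[
\hdr{*}(\lt^1\otimes\breve K_N)\;\cong\;\hdr{*}\big(]Y_{\{0\},d}^{lisse}[\otimes\breve K_N\big),
\]
and finishes by the smooth–model/rigid comparison using point (5) of the geometric theorem. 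No \v{C}ech spectral sequence, no analysis of the strata $]Y_{M,d}^{lisse}[$ for $0<\dim M<d$, and no matching of a flag–indexed double complex with a putative stratification of $\dl^d_{\bar\F}$ is required. The ``main obstacle'' you flag (finding smooth models for the intermediate strata and identifying their special fibres with intermediate DL pieces, compatibly with all restriction maps) is a genuine gap in your outline, and the paper never needs to confront it because it never uses those strata.

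There is a second technical point you skip. The excision theorem in the paper requires a formal scheme that is semi-stable généralisé in the strong sense (in particular $p$-adic), whereas $Z_d$ is only \emph{ponctuellement} semi-stable généralisé. You apply excision directly in $Z_d$; this is not justified. The paper handles this by embedding $Z_d$ into the blown-up Shimura model $\hat{\rm Sh}_d$, choosing a $p$-adic étale neighbourhood $V$ of $Z_d$ of semi-stable reduction, identifying $\lt^1\cong{]}\bar{\rm Sh}_{d,\bar s}{[}_V$, and applying excision inside $V$. This passage through the Shimura variety is not optional and is absent from your proposal.
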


\begin{rem}

Notons que le modèle $Z_d$ obtenu est semi-stable généralisé au sens faible et nous ne pouvons appliquer directement le théorème d'excision. Cette difficulté est surmontée par une astuce déjà présente dans les travaux de Yoshida  où $Z_d$ est plongé dans un modèle entier $\hat{{\rm Sh}}$ bien choisi d'une variété de Shimura qui est cette fois-ci semi-stable généralisée au sens fort (et donc $p$-adique). On peut alors un voisinage étale $U\subset \hat{{\rm Sh}}$ de $Z_d$ qui est $p$-adique et semi-stable généralisé et sur lequel on peut appliquer le théorème d'excision.

\end{rem}

L'intérêt principal du résultat précédent réside dans le fait que les variétés de Deligne-Lusztig $\dl^d_{\F}$ réalisent les correspondances de Green desquels peuvent se déduire  les correspondances de Jacquet-Langlands et de Langlands locale pour les représentation supercuspidales de niveau $0$. La dernière difficulté consiste à vérifier que les actions des groupes $G^\circ:=\gln_{d+1}(\OC_K)$, $D^*$ et $W_K$ sur $\lt^1$ induisent les bons automorphismes sur la variété de Deligne-Lusztig  pour pouvoir appliquer cette correspondance. Plus précisément, l'espace algébrique $\dl^d_{\F}$ est un revêtement galoisien de $\Omega^d_{\F}:=\P_{\F}^d \backslash \bigcup_{H\in \P^d(\F)} H$ tel que $\gal (\dl^d_{\F}/\Omega^d_{\F})=\F_{q^{d+1}}$ est cyclique d'ordre premier à $p$. De plus, il admet une action de $G^\circ$ qui commute à la projection du revêtement.  Il s'agit alors de prouver que les actions des groupes  $\OC_D^*$ et $I_K$ sur $\lt^1$ se transporte à $\dl^d_{\F}$ et induisent des isomorphismes naturels :
\[
\OC_D^* /(1+\Pi_D \OC_D)\cong \gal (\dl^d_{\F}/\Omega^d_{\F})\cong I_K/I_{K_N}
\]
avec $N=q^{d+1}-1$ et $K_N=K(\varpi^{1/N})$.

Pour le groupe $I_K$, cela a été déjà fait par Yoshida dans sa thèse et le cas de $\OC_D^*$ est essentiel pour étudier la correspondance de Jacquet-Langlands. Expliquons comment nous parvenons à traiter ce cas. Comme les actions de $I_K$ et de $\OC_D^*$ commutent entre elles, on obtient une action de $\OC_D^*$ sur la base $\Omega^d_{\F}$ qui commute avec celle de $G^\circ$. Nous nous ramenons alors à prouver le résultat technique suivant :
\begin{lemintro}\label{lemintroautdl}
On a 
\[ \aut_{\gln_{d+1}(\F)}(\Omega^d_{\bar{\F}})= \{ 1 \}. \]
\end{lemintro}

En particulier, cette annulation montre que l'on a bien une flèche naturelle $\OC_D^* \to \gal (\dl^d_{\F}/\Omega^d_{\F})$ qui induit une flèche $\OC_D^* /(1+\Pi_D \OC_D)\to \gal (\dl^d_{\F}/\Omega^d_{\F})$ car le groupe $1+\Pi_D \OC_D$ est $N$-divisible alors que $ \gal (\dl^d_{\F}/\Omega^d_{\F})$ est de $N$-torsion. Par finitude et égalité des cardinaux, il suffit de montrer que cette flèche est injective.

Mais par description explicite de la cohomologie $l$-adique de $\lt^1$ et de $\dl^d_{\F}$  en tant que $\bar{\Q}_l[\F_{q^{d+1}}^*]$-module, l'identité est le seul élément qui agit par un automorphisme de trace non-nulle. Par formule de Lefschetz, il suffit alors de prouver que les éléments réguliers elliptiques de $\OC_D^*$ qui ne sont pas dans $1+\Pi_D \OC_D$ n'ont aucun point fixe dans $\lt^1(C)$. Cela se prouve en étudiant l'application des périodes $\pi_{GH}: \lt^1(C)\to \P^d(C)$ de Gross-Hopkins (cf \cite{grho}). Les points fixes dans le but sont faciles à déterminer et leur fibre est explicite. On peut alors montrer qu'il n'y a aucun point fixe dans chacune de ces fibres par calcul direct.

En particulier, le raisonnement précédent permet aussi de réaliser  les correspondances de Jacquet-Langlands et de Langlands locale  pour la cohomologie $l$-adique donnant ainsi une version plus forte du résultat principal de la thèse de Yoshida \cite{yosh} :     

\begin{theointro}
\label{theointroprincet}
Fixons un isomorphisme $C\cong \bar{\Q}_l$.  Pour $l$ premier à $p$, tout caractère primitif $\theta: \F_{q^{d+1}}^*\to C^*$, il existe des isomorphismes de $G^\circ\times W_K$-représentations 
 \[\homm_{D^*}(\rho(\theta), \hetc{i}((\MC_{LT}^1/ \varpi^{\Z}),\bar{\Q}_l)\otimes C)\underset{G^{\circ}\times W_K}{\cong}  \begin{cases} \dl(\theta) \otimes \wwwww(\theta) &\text{ si } i=d \\ 0 &\text{ sinon} \end{cases}\]

\end{theointro}

\subsection*{Remerciements}

Le présent travail a été, avec \cite{J1,J2,J3,J4}, en grande partie réalisé durant ma thèse à l'ENS de lyon,  et a pu bénéficier des nombreux conseils et de l'accompagnement constant de mes deux maîtres de thèse Vincent Pilloni et Gabriel Dospinescu. Je les en remercie très chaleureusement. Je tenais aussi à exprimer ma reconnaissance envers  Juan Esteban Rodriguez Camargo pour les nombreuses discussions sur les éclatements qui ont rendu possible la rédaction de ce manuscrit.

\subsection*{Conventions\label{paragraphconv}}
Dans tout l'article, on fixe $p$ un premier. Soit $K$ une extension finie de $\Q_p$ fixée, $\mathcal{O}_K$ son anneau des entiers, $\varpi$ une uniformisante et $\F=\F_q$ son corps r\'esiduel. On note $C=\hat{\bar{K}}$ une complétion d'une clôture algébrique de $K$ et $\breve{K}$ une complétion de l'extension maximale non ramifiée de $K$. Soit $L\subset C$ une extension complète de $K$ susceptible de varier, d'anneau des entiers $\mathcal{O}_L$, d'idéal maximal  $\mG_L$ et de corps r\'esiduel $\kappa$. $L$ pourra être par exemple spécialisé en $K$, $\breve{K}$ ou $C$.

Soit $S$ un $L$-espace analytique, on note $\A^n_{ S}=\A^n_{ L}\times S$ l'espace affine  sur $S$ 
  L'espace $\B^n_S$ sera la boule unité et les boules ouvertes seront notées  $\mathring{\B}^n_S$. 




Si $X$ est un affinoïde sur $L$,  on notera $X^\dagger$ l’espace surconvergent associé. Dans ce cas,  le complexe de de Rham $\Omega_{X/L}^\bullet$ (resp. le complexe de de Rham surconvergent $\Omega_{X^\dagger/L}^\bullet$) qui calcule la cohomologie de de Rham $\hdr{*}(X)$ (resp. la cohomologie de de Rham surconvergent $\hdr{*}(X^\dagger)$). Quand $X$ est quelconque, ces cohomologies seront formées à partir de l’hypercohomologie de ces complexes. Par \cite[Proposition 2.5]{GK1}, le théorème $B$ de Kiehl \cite[Satz 2.4.2]{kie} et la suite spectrale de Hodge-de Rham, si $X$ est Stein, ces cohomologies sont encore calculées à  partir de leur complexe respectif\footnote{En cohomologie de de Rham (non surconvergente), l'hypothèse $X$ quasi-Stein  suffit.}. Les deux cohomologies coïncident si $X$ est partiellement propre (par exemple Stein).

La cohomologie rigide d'un schéma algébrique $Y$ sur $\kappa$ sera notée $\hrig{*} (Y/L)$. Si $X$ est un espace rigide sur $L$ et $Y$ un schéma algébrique sur $\kappa$, $\hdrc{*} (X^\dagger)$ et $\hrigc{*} (Y/L)$ seront les cohomologies de $X^\dagger$ et de $Y$ à  support compact. On rappelle la dualité de Poincaré :
\begin{theo} \label{theodualitepoinc}
\begin{enumerate}
\item \cite[proposition 4.9]{GK1} Si $X$ est un $L$-affinoïde lisse  de dimension pure $d$,  on a \[\hdr{i} (X^\dagger)\cong \hdrc{2d-i} (X^\dagger)^\lor \et \hdrc{i} (X^\dagger)\cong \hdr{2d-i} (X^\dagger)^\lor\]

\item \cite[proposition 4.11]{GK1}  Si $X$ est un $L$-espace lisse et Stein de dimension pure $d$,  on a \[\hdr{i} (X^\dagger)\cong \hdrc{2d-i} (X^\dagger)^\lor \et \hdrc{i} (X^\dagger)\cong \hdr{2d-i} (X^\dagger)^\lor\]
\item \cite[théorème 2.4]{bert1} Si $Y$ est un schéma algébrique lisse sur $\kappa$ de dimension $d$, on a pour tout $i$ \[\hrig{i} (Y/L)\cong \hrigc{2d-i} (Y/L)^\lor \et \hrigc{i} (Y/L)\cong \hrig{2d-i} (Y/L)^\lor\]
\end{enumerate}

\end{theo}
 Nous donnons un théorème de comparaison :
\begin{theo}\label{theopurete}
 \cite[proposition 3.6]{GK3}  
Soit $\XC$ un schéma affine formel sur $\spf (\OC_L)$ de fibre spéciale $\XC_s$ et de fibre générique $\XC_\eta$. Supposons $\XC$ lisse, alors on a un isomorphisme fonctoriel \[\hdr{*}(\XC_\eta^\dagger)\cong \hrig{*}(\XC_s)\]

\end{theo}

Dans ce paragraphe, $L$ est non-ramifié sur $K$. Soit $\XC$ un schéma formel topologiquement de type fini sur $\spf  (\OC_L)$ de fibre g\'en\'erique $\XC_\eta$ et de fibre sp\'eciale $\XC_s$. On a une flèche de spécialisation $\spg: \XC_\eta \rightarrow \XC_s$ et pour tout fermé $Z\subset \XC_s$, on note $]Z[_{\XC}$ l'espace analytique $\spg^{-1} (Z)\subset \XC_\eta$.  

\begin{defi}

On dit que $\XC$ est faiblement de réduction  semi-stable généralisé s'il admet un recouvrement $\XC=\uni{U_t}{t\in T}{}$ et un jeu de morphisme fini étale  \[\varphi_t : U_t\to  \spf (\OC_L\left\langle x_1,\cdots , x_d\right\rangle/(x_1^{\alpha_1}\cdots x_r^{\alpha_r}-\varpi )).\]

\end{defi}

  Quitte à rétrécir les ouverts $U_t$ et à prendre $r$ minimal, on peut supposer que les composantes irréductibles  $\bar{U}_t$ sont les $V(\bar{x}^*_i)$ pour $i\leq r$ avec $\bar{x}^*_i =\bar{\varphi}_t (\bar{x}_i)$ et   ont multiplicités $\alpha_i$. 

Observons que pour schéma formel semi-stable $\XC$, on vérifie aisément que $\XC$ est ponctuellement de réduction semi-stable généralisé au sens suivant. 
\begin{defi}

Un schéma formel est ponctuellement de réduction semi-stable généralisé si l'anneau local complété en chacun des points fermés et de la forme
\[\OC_L\left\llbracket x_1,\cdots , x_d\right\rrbracket /(x_1^{\alpha_1}\cdots x_r^{\alpha_r}-\varpi ).\]

\end{defi}

Cette notion plus faible  nous permet par exemple de considérer des schémas formels qui ne sont pas localement des complétions $p$-adiques de schémas algébriques de type fini sur $\OC_L$. Par exemple, l'espace lui-même $\spf(\OC_L\left\llbracket x_1,\cdots , x_d\right\rrbracket /(x_1^{\alpha_1}\cdots x_r^{\alpha_r}-\varpi ))$ est ponctuellement  semi-stable généralisé alors qu'il n'est pas  semi-stable généralisé d'après la remarque précédent.  Le résultat  suivant suggère que sous l'hypothèse d'être localement la complétion $p$-adique d'un schéma algébrique de type fini sur $\OC_L$, les deux notions de semi-stabilité  sont  en quelque sorte équivalentes. 

\begin{prop}

Si $\XC$ est un schéma formel ponctuellement  semi-stable généralisé qui  admet une immersion ouverte vers un schéma formel $\YC$ qui est  localement la  complétion $p$-adique de schémas algébriques de type fini sur $\OC_L$, alors il existe un voisinage étale $U$ de $\XC$ dans l'espace ambiant $\YC$ qui est  semi-stable généralisé.

\end{prop}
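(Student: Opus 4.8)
The plan is to reduce the statement to a local computation, using that the property of being (strongly) semi-stable generalized is étale-local and that the hypotheses give us, around each closed point, a concrete algebraic model. Let $x \in \XC$ be a closed point; since $\XC \hookrightarrow \YC$ is an open immersion and $\YC$ is locally the $p$-adic completion of a finite-type $\OC_L$-scheme, there is an affine open $V = \spf(\hat{A})$ of $\YC$ containing $x$ with $\hat{A}$ the $p$-adic completion of a finite-type $\OC_L$-algebra $A$. Meanwhile the pointwise semi-stable generalized hypothesis tells us that the completed local ring $\hat{\OC}_{\XC,x}$ (which equals $\hat{\OC}_{\YC,x}$, since $\XC \subset \YC$ is open) is isomorphic to $\OC_L\llbracket x_1,\dots,x_d\rrbracket/(x_1^{\alpha_1}\cdots x_r^{\alpha_r}-\varpi)$.

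First I would extract from this formal-local isomorphism an \emph{algebraic}, in fact an étale-local, normal form. The elements $x_1,\dots,x_d$ of $\hat{\OC}_{\YC,x}$ can be approximated by elements $y_1,\dots,y_d$ of $A$ (or of a localization of $A$ at a function not vanishing at $x$); the key point is that $y_1^{\alpha_1}\cdots y_r^{\alpha_r}-\varpi$ will then vanish to the correct order, and the induced map
\[
B := \OC_L[T_1,\dots,T_d]/(T_1^{\alpha_1}\cdots T_r^{\alpha_r}-\varpi) \to A_f
\]
(for suitable $f$) is étale at $x$ after possibly a further localization, by the local criterion for smoothness/étaleness applied modulo $\varpi$ — here one uses that the $\alpha_i$ are prime to $p$, so that the special fibre $B/\varpi = \F[T]/(T_1^{\alpha_1}\cdots T_r^{\alpha_r})$ is reduced and the comparison of completed local rings forces the Jacobian condition. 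After $p$-adic completion this gives an étale morphism $\spf(\widehat{A_f}) \to \spf(\OC_L\langle T_1,\dots,T_d\rangle/(T_1^{\alpha_1}\cdots T_r^{\alpha_r}-\varpi))$ from an open neighbourhood of $x$ in $\YC$ to the standard semi-stable generalized model; this exhibits a strongly semi-stable generalized open $W_x \subset \YC$ containing $x$.

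It then remains to patch these local neighbourhoods. The union $U := \bigcup_{x \in \XC} W_x$ is an open formal subscheme of $\YC$ containing $\XC$; it is covered by the $W_x$, each of which admits a finite étale map to a standard chart, so $U$ is semi-stable generalized by definition. Strictly speaking one should check that $U$ can be taken to be a genuine \emph{étale neighbourhood} of $\XC$ in $\YC$ — here an open neighbourhood already suffices, so no refinement is needed — and that the $W_x$ are étale over the standard chart rather than merely at the point $x$; this is arranged by shrinking $W_x$ to the locus where the morphism constructed above is étale, which is open and contains $x$.

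The main obstacle is the middle step: passing from the \emph{formal} (completed-local) normal form to an \emph{étale} algebraic one. The subtlety is that an isomorphism of completed local rings does not by itself produce an étale map of the ambient finite-type schemes; one needs Artin approximation (or its elementary consequences for étale-local structure) together with careful bookkeeping on multiplicities to ensure that the lifted equation $y_1^{\alpha_1}\cdots y_r^{\alpha_r}-\varpi$ really is, up to a unit, a defining equation near $x$ and that $r$ is minimal. The hypothesis that $\alpha_i$ is prime to $p$ is essential precisely here, since it is what makes the special fibre reduced and hence lets the formal isomorphism be detected by an étale map; without it the statement fails, as the footnote's example of $\spf(\OC_L\llbracket x_1,\dots,x_d\rrbracket/(x_1^{\alpha_1}\cdots x_r^{\alpha_r}-\varpi))$ already shows that the pointwise notion is genuinely weaker in the non-$p$-adic setting.
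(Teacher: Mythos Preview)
Your overall strategy is correct and matches the paper, which simply defers to Yoshida (his Proposition 4.8(i)) where the argument runs via Artin approximation---exactly the tool you name in your closing paragraph. The gap is in the direct route you sketch before that. You claim the special fibre $B/\varpi = \F[T_1,\dots,T_d]/(T_1^{\alpha_1}\cdots T_r^{\alpha_r})$ is reduced because the $\alpha_i$ are prime to $p$; this is false as soon as some $\alpha_i\geq 2$ (the class of $T_1\cdots T_r$ is a nonzero nilpotent), and non-reducedness of the special fibre is precisely what distinguishes the \emph{generalized} semi-stable setting from the classical one. Moreover the map $B\to A_f$, $T_i\mapsto y_i$, is not even well-defined: approximating $x_i$ by $y_i\in A$ gives $y_1^{\alpha_1}\cdots y_r^{\alpha_r}\equiv\varpi$ only modulo a high power of the maximal ideal, not an equality in $A_f$. (Also, the paper's definition does not require the $\alpha_i$ to be prime to $p$, and the footnote example you invoke illustrates failure of $p$-adicness, not of tameness.)

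The fix is to promote Artin approximation from a caveat to the core of the proof: since $\YC$ is locally the $p$-adic completion of a finite-type $\OC_L$-scheme and $\hat{\OC}_{\YC,x}$ agrees with the completed local ring of the origin in $\spec\bigl(\OC_L[T_1,\dots,T_d]/(T_1^{\alpha_1}\cdots T_r^{\alpha_r}-\varpi)\bigr)$, Artin approximation yields a common étale neighbourhood of the two points, whose $p$-adic completion is an \emph{étale} (not open) neighbourhood of $x$ in $\YC$ mapping étale to the standard model. That is exactly what ``voisinage étale'' asks for; drop the attempt to produce open $W_x$ directly.
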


\begin{proof}
Voir la preuve de \cite[Propositions 4.8. (i)]{yosh}
\end{proof}

L'intérêt des espaces semi-stables que nous avons introduits provient du fait que leur géométrie fait naturellement apparaître des recouvrements dont on peut espérer calculer la cohomologie des intersections grâce à \ref{theopurete} et au résultat qui va suivre. Pour pouvoir énoncer ce dernier, nous introduisons quelques notations pour un schéma formel semi-stable généralisé $\XG$ (au sens le plus fort) dont la fibre spéciale admet la décomposition en composantes irréductibles $\XC_s=\bigcup_{i\in I} Y_i$. Pour toute partie finie $J\subset I$ de l'ensemble des composantes irréductibles, on note $Y_J=\inter{Y_j}{j\in J}{}$ et $Y_J^{lisse}=Y_{J}\backslash \bigcup_{i\notin J}Y_i$. Le résultat est le suivant 

\begin{theo}\label{theoexcision}

Étant donné un schéma formel  semi-stable généralisé $\XC$ comme précédemment avec pour décomposition en composantes irréductibles $\XC_s=\bigcup_{i\in I} Y_i$, pour toute partie finie $J\subset I$, la flèche naturelle de restriction 
\[\hdr{*} (\pi^{-1}(]Y_J[_\XC))\fln{\sim}{} \hdr{*} (\pi^{-1}(]Y_{J}^{lisse}[_\XC))\]
est un isomorphisme.

\end{theo}

\begin{proof}
Il s'agit de la généralisation du résultat pour le cas semi-stable \cite[Theorem 2.4.]{GK2} au cas semi-stable généralisé réalisée dans \cite[Théorème 5.1.]{J4}.
\end{proof}

\section{La tour de revêtements}


Nous allons d\'efinir la tour de Lubin-Tate construite dans \cite[section 1 et 4]{dr1}. 
Soit $\CC$ la sous-cat\'egorie pleine des $\OC_{\breve{K}}$-alg\`ebres locales noeth\'eriennes et compl\`etes $A$ telles que le morphisme naturel $\OC_{\breve{K}}/ \varpi\OC_{\breve{K}} \to A/ \mG_A$ soit un isomorphisme. On consid\`ere, pour $A$ un objet de $\CC$, l'ensemble des $\OC_K$-modules formels $F$ sur $A$ modulo isomorphisme. On note $X+_{F} Y\in A\left\llbracket X,Y\right\rrbracket$ la somme et $[ \lambda ]_F X\in A\left\llbracket X\right\rrbracket$ la multiplication par $\lambda$ dans $F$. 

Si $A$ est de caract\'eristique $p$, on appelle hauteur le plus grand entier $n$ (possiblement infini) tel que $[ \varpi ]_F$ se factorise par ${\rm Frob}_q^n$. On a le r\'esultat classique (voir par exemple \cite[chapitre III, §2, théorème 2]{frohlforgr})  : %

\begin{prop}
Si $A= \bar{\F}$, la hauteur est un invariant total i.e. deux $\OC_K$-modules formels sont isomorphes si et seulement si ils ont la m\^eme hauteur.  
\end{prop}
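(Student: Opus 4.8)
Pour cet énoncé classique, voici le plan que je suivrais. Le sens direct (isomorphes $\Rightarrow$ même hauteur) est formel : si $f:F\iso F'$ est un isomorphisme de $\OC_K$-modules formels, la relation $f\circ[\varpi]_F=[\varpi]_{F'}\circ f$ donne $[\varpi]_F=f^{-1}\circ[\varpi]_{F'}\circ f$, et il suffit de remarquer qu'en caractéristique $p$, composer une série de $\bar\F\llbracket X\rrbracket$ à gauche ou à droite par une série de coefficient linéaire non nul ne modifie pas le plus grand entier $n$ pour lequel cette série appartient à $\bar\F\llbracket X^{q^n}\rrbracket$ : d'un côté $g(X)^{q^n}\in\bar\F\llbracket X^{q^n}\rrbracket$ pour toute $g\in\bar\F\llbracket X\rrbracket$, de l'autre $g'(0)\neq0$ permet de revenir en arrière en composant par $g^{-1}$. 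Donc $F$ et $F'$ ont la même hauteur, et il reste à établir la réciproque.

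Soit $h$ la hauteur commune. Le cas $h=\infty$ (c'est-à-dire $[\varpi]_F=[\varpi]_{F'}=0$) se traite à part : alors $p$ est nul dans $\en(F)$ et $\en(F')$, donc $F$ et $F'$ sont des lois de groupe formel de dimension $1$ annulées par $p$, nécessairement isomorphes à $\widehat\G_a$ sur le corps parfait $\bar\F$, et un changement de coordonnée supplémentaire ramène la structure de $\OC_K/\varpi=\F$-module à la structure scalaire (par le procédé d'approximations successives ci-dessous). Supposons désormais $h<\infty$. L'approche que je privilégierais est celle des approximations successives : on construit des polynômes $f_n\in\bar\F[X]$ de degré $\leq n$, de coefficient linéaire non nul, vérifiant $f_{n+1}\equiv f_n\pmod{X^{n+1}}$ et tels que $f_n$ soit un morphisme de $\OC_K$-modules formels modulo le degré $n+1$, c'est-à-dire $f_n(F(X,Y))\equiv F'(f_n(X),f_n(Y))$ et $f_n([\lambda]_FX)\equiv[\lambda]_{F'}f_n(X)$ pour tout $\lambda\in\OC_K$, modulo le degré $n+1$ ; la limite $f=\lim_nf_n$ est alors l'isomorphisme cherché, en initialisant par $f_1(X)=X$.

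L'étape de récurrence, passant de $f_n$ à $f_{n+1}=f_n+cX^{n+1}$, revient à résoudre en $c$ un système dont l'obstruction, lue en degré $n+1$, comporte deux ingrédients : un $2$-cocycle symétrique provenant des lois de groupe, que le lemme de comparaison de Lazard identifie à un cobord ; et des équations additives de type Artin--Schreier $u\,c^{q^{m}}-v\,c=w$ avec $u,v\in\bar\F^{\times}$ et $w\in\bar\F$ fabriqué à partir de $f_n$, toujours solubles car $\bar\F$ est algébriquement clos (une application polynomiale additive non nulle $\bar\F\to\bar\F$ est surjective). Vérifier que ces obstructions sont simultanément dans l'image et mutuellement compatibles, et que la seule condition relative à $\varpi$ entraîne celles relatives à tous les $\lambda$ (il suffit d'ailleurs d'imposer $\varpi$ et les représentants de Teichmüller $\mu_{q-1}$, qui engendrent $\OC_K$ comme anneau topologique), est une affaire combinatoire tirée des contraintes d'associativité et de distributivité de $F$ et $F'$ : c'est exactement le contenu de \cite[chapitre III, §2, théorème 2]{frohlforgr}, auquel je renverrais pour les détails — et c'est aussi le point délicat, l'endroit où l'on utilise de façon essentielle que $\bar\F$ est algébriquement clos.

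Une alternative plus conceptuelle, moyennant davantage de machinerie, consiste à passer par la théorie de Dieudonné relative à $\OC_K$. Sur $\bar\F$, un $\OC_K$-module formel de hauteur $h<\infty$ correspond à un module de Dieudonné libre de rang $h$ sur $\OC_{\breve K}$ muni de ses opérateurs habituels, dont l'isocristal associé sur $\breve K$ est de dimension $h$ avec polygone de Newton joignant $(0,0)$ à $(h,1)$, donc de pente unique $1/h$. Comme $1$ et $h$ sont premiers entre eux, cet isocristal est simple, donc isomorphe à l'unique isocristal simple de pente $1/h$ d'après la classification de Dieudonné--Manin sur le corps algébriquement clos $\bar\F$ ; ainsi tous les $F$ de hauteur $h$ ont des isocristaux isomorphes. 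Son algèbre d'endomorphismes $D$ étant l'algèbre à division centrale sur $K$ de dimension $h^2$, les réseaux de Dieudonné adéquats de cet isocristal forment une unique orbite sous $D^{\times}$, d'où l'unicité du module de Dieudonné à isomorphisme près, puis celle de $F$.
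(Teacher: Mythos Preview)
Le papier ne donne pas de démonstration propre pour cet énoncé : il se contente de renvoyer à \cite[chapitre III, §2, théorème 2]{frohlforgr} dans le texte qui précède la proposition. Ta première approche par approximations successives est précisément celle de cette référence, et ton esquisse en restitue fidèlement l'architecture (cocycle symétrique de Lazard pour la loi de groupe, équations additives de type Artin--Schreier pour la structure de $\OC_K$-module, réduction aux générateurs $\varpi$ et $\mu_{q-1}$). Tu vas donc au-delà du papier en explicitant ce qu'il ne fait que citer.

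Ta seconde approche, via la théorie de Dieudonné relative à $\OC_K$ et la classification de Dieudonné--Manin, est une alternative correcte et conceptuellement plus transparente : elle explique d'emblée pourquoi l'invariant est total (un seul isocristal simple de pente $1/h$, une seule orbite de réseaux sous $D^\times$), au prix d'une machinerie plus lourde. Le papier n'y fait pas allusion, mais cela cadre bien avec l'esprit de l'article, qui utilise par ailleurs l'identification $\en(\Phi_d)\otimes K\cong D$.
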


Fixons un repr\'esentant "normal"  $\Phi_d$ de hauteur $(d+1)$ tel que : 
\begin{enumerate}
\item $[ \varpi ]_{\Phi_d} X\equiv X^{q^{d+1}} \pmod{X^{q^{d+1}+1}}$, 
\item $X +_{\Phi_d} Y\equiv X+Y \pmod{ (X,Y)^2}$, 
\item $[ \lambda ]_{\Phi_d} X\equiv \lambda X \pmod{X^2}$ pour $\lambda \in \OC_K$. 
\end{enumerate}

\begin{defi}
On appelle $\widehat{\MC}^0_{ LT}$ le foncteur qui \`a un objet $A$ dans $\CC$ associe  les doublets $(F, \rho)$ \`a isomorphisme pr\`es o\`u : 
\begin{enumerate}
\item $F$ est un $\OC_K$-module formel sur $A$ de hauteur $d+1$,
\item $\rho : F \otimes A/ \mG_A \to \Phi_d$ est une quasi-isog\'enie.  
\end{enumerate} 
On d\'efinit $\FC^{0,(h)}_{ LT}$ le sous-foncteur de $\widehat{\MC}^0_{ LT}$ des doublets $(F, \rho)$ o\`u $\rho$ est une quasi-isog\'enie de hauteur $h$.   
\end{defi}

\begin{theo}[Drinfeld \cite{dr1} proposition 4.2]
Le foncteur $\FC^{0,(0)}_{ LT}$ est repr\'esentable par $\widehat{\lt}^0=\spf (A_0)$ o\`u $A_0$ est isomorphe \`a $\OC_{\breve{K}} \llbracket T_1, \dots, T_d \rrbracket$.
Le foncteur $\widehat{\MC}^0_{LT}$ se d\'ecompose en l'union disjointe $\coprod_h \FC^{0,(h)}_{LT}$, chaque $ \FC^{0,(h)}_{LT}$ étant isomorphe non-canoniquement à $ \FC^{0,(0)}_{LT}$.  
\end{theo}

D\'efinissons maintenant les structures de niveau. 

\begin{defi}
Soit $n$ un entier sup\'erieur ou \'egal \`a 1. Soit $F$ un $\OC_K$-module formel sur $A$ de hauteur $d+1$, une $(\varpi)^n$-structure de niveau est un morphisme de $\OC_K$-modules formels $\alpha : ((\varpi)^{-n}/ \OC_K)^{d+1} \to F \otimes \mG_A$ qui v\'erifie la condition : 
\[ \prod_{x \in ((\varpi)^{-n}/ \OC_K)^{d+1}} (X - \alpha(x)) \; |  \; [ \varpi^n ](X) \]
dans $A \llbracket X \rrbracket$.   
Si $(e_i)_{ 0 \le i  \le d}$ est une base de $((\varpi)^{-n}/ \OC_K)^{d+1}$, le $(d+1)$-uplet $(\alpha(e_i))_i$ est appel\'e syst\`eme de param\`etres formels. 

On note $\widehat{\MC}^n_{LT}$ le foncteur classifiant, pour tout objet $A$ de $\CC$, les triplets $(F, \rho, \alpha)$ o\`u $(F, \rho) \in \widehat{\MC}^0_{LT}(A)$ et $\alpha$ est une $(\varpi)^n$-structure de niveau. On d\'efinit de m\^eme par restriction, $\FC^{n,(h)}_{LT}$.
\end{defi} 

\begin{theo}[Drinfeld \cite{dr1} proposition 4.3]
\label{theoreplt}
\begin{enumerate}
\item Le foncteur $\FC^{n,(0)}_{LT}$ est repr\'esentable par $\widehat{\lt}^n=\spf (A_n)$ o\`u $A_n$ est local de dimension $d+1$, r\'egulier sur $A_0$. Le morphisme $A_0 \to A_n$ est fini et plat de degr\'e $\card(\gln_{d+1}(\OC_K/ \varpi^n \OC_K))= q^{(d+1)^2(n-1)} \prod_{i=0}^d (q^{d+1}-q^i)$. 
\item Si $(F^{univ}, \rho^{univ}, \alpha^{univ})$ est le groupe formel universel muni de la structure de niveau universelle, tout syst\`eme de param\`etres formels $(x_i)_i$   engendre topologiquement $A_n$ i.e. on a une surjection : 
\begin{align*}
\OC_{\breve{K}} \llbracket X_0, \dots, X_d \rrbracket & \to A_n \\
X_i & \mapsto x_i
\end{align*}
\item L'analytification  $\lt^n=\widehat{\lt}^{n,rig}$ est lisse sur $\breve{K}$ et le morphisme $\lt^n\to \lt^0$ est un revêtement étale  de groupe de Galois $\gln_{d+1} (\OC_K/\varpi^n \OC_K)=\gln_{d+1} (\OC_K)/(1+\varpi^n {\rm M}_{d+1}(\OC_K))$.
\end{enumerate}
\end{theo}

\section{\'Equation des rev\^etements et géometrie de la fibre spécial de $\widehat{\lt}^1=Z_0$  \label{sssectionltneq}} 

D'apr\`es le th\'eor\`eme \ref{theoreplt}, on a une suite exacte : 
\[ 0 \to I_n \to \OC_{\breve{K}} \llbracket X_0, \dots, X_d \rrbracket \to A_n \to 0   \]
associ\'ee \`a un syst\`eme de param\`etres formels $x=(x_0, \dots, x_d)$. 
Nous allons tenter de d\'ecrire explicitement l'id\'eal $I_n$.  D'après la suite exacte précédente,  on a une immersion fermée $Z_0\rightarrow \spf  \OC_{\breve{K}} \llbracket X_0, \dots, X_d \rrbracket $ de codimension $1$ entre deux schémas réguliers.  L'idéal $I_n$ est donc principal.  Pour obtenir un générateur, il suffit d'exhiber un élément de $I_n\backslash \mG^2$ avec  $\mG=(\varpi, X_0,\cdots, X_d)$ l'id\'eal maximal de $\OC_{\breve{K}} \llbracket X_0 , \dots, X_d \rrbracket$.   Nous écrivons aussi  $\bar{\mG}=(X_0,\cdots, X_d)$ l'idéal maximal de $\bar{\F} \llbracket X_0 , \dots, X_d \rrbracket$ et $\mG_{A_n}$ l'id\'eal maximal de $A_n$.   

Inspirons-nous de \cite[3.1]{yosh}.   Si $z=(z_0, \dots , z_m)$ est un $(m+1)$-uplet de points de $\varpi^n$-torsion pour $F^{univ}$ et $a \in (\OC_K / \varpi^n \OC_K)^{m+1}$ (par exemple $m=d$), on note \[l_{a, F}(z)=[\tilde{a}_0]_{F^{univ}} (z_0)+_{F^{univ}}\cdots +_{F^{univ}} [\tilde{a}_m]_{F^{univ}} (z_m)\] où $\tilde{a}_i$ est un relevé de $a_i$ dans $\OC_K$ pour tout $i$. Par d\'efinition de la structure de niveau, on a la relation
\[ [ \varpi^k](T)= U_k(T) \prod_{a \in (\OC_K/ \varpi^k \OC_K)^{d+1}} (T- l_{a, F}(x)) \]
pour $k\leq n$,  o\`u $U_k$ est une unit\'e telle que $U_k(0) \in 1+ \mG_{A_n}$.  En comparant les termes constants de $[\varpi^n](T)/ [\varpi^{n-1}](T)$,  on obtient 
\[ \varpi = (-1)^{q^{n-1}(q-1)}U_n(0)/U_{n-1}(0) \prod_{a \in (\OC_K/ \varpi^n \OC_K)^{d+1} \backslash (\varpi \OC_K/ \varpi^n \OC_K)^{d+1} )} l_{a, F}(x)=:P. \]
Comme la fl\`eche $\mG \to \mG_{A_n}$ est surjective, on peut relever $U_n(0)/U_{n-1}(0)$ en un \'el\'ement $\tilde{U}(X_0, \dots, X_d)$ de $1+\mG$ et $l_{a,F} (x)$ en une série $l_{a,F}(X_0,\ldots, X_d)$ dans $\OC_{\breve{K}} \llbracket X_0 , \ldots, X_d \rrbracket$.  Par construction, $\varpi-P$ est un élément de $I_n$.   Pour prouver qu'il  n'est pas dans $\mG^2$, on vérifie les congruences suivantes  
\begin{prop}[\cite{yosh} Proposition 3.4]
\label{PropConglaF}
\begin{enumerate}
\item Pour tout $a\in(\OC_K/\varpi^n \OC_K)^{d+1}$, on a \[l_{a,F} (X)\equiv l_{a}(X) \pmod{(\varpi^n,X_0,\cdots,X_d)^2}\] où $l_{a}(X)={a}_0 X_0+\cdots + {a}_d X_d$. 
\item Soit $a=ca'\in (\OC_K/\varpi^n \OC_K)^{d+1}$ avec $c$ une unité de $\OC_K/\varpi^n \OC_K$, alors il existe une unité $u_c$ de $\OC_{\breve{K}} \llbracket X_0 , \dots, X_d \rrbracket$ telle que $l_{a,F}(X)=u_c(X) l_{a',F}(X)$.
\item S'il existe $j\leq d+1$ tel que $a_{j+1}=\cdots=a_d=0$ (la condition devient vide si $j=d+1$) alors $l_{a,F}(X)\in   \OC_{\breve{K}}\llbracket X_0,\ldots, X_j\rrbracket$.
\end{enumerate}
\end{prop}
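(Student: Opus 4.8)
Le plan consiste à déduire les trois assertions directement des axiomes des $\OC_K$-modules formels, en exploitant que $\alpha^{univ}$ est un morphisme et que, par construction, $l_{a,F}$ est une combinaison $\OC_K$-linéaire --- calculée dans $F^{univ}$ --- des paramètres formels $x_0,\dots,x_d$. Pour (1), on part des deux congruences valables pour tout $\OC_K$-module formel $F$ sur un anneau quelconque : $Y+_{F}Z\equiv Y+Z\pmod{(Y,Z)^2}$ et $[\lambda]_{F}(T)\equiv\lambda T\pmod{T^2}$ pour $\lambda\in\OC_K$. En les appliquant à $l_{a,F}(X)=[\tilde a_0]_{F^{univ}}(X_0)+_{F^{univ}}\dots+_{F^{univ}}[\tilde a_d]_{F^{univ}}(X_d)$, et en choisissant un relevé de $l_{a,F}(x)\in A_n$ de la forme $\sum_i \tilde a_i X_i + q(X)$ avec $q\in(X_0,\dots,X_d)^2$, on obtient $l_{a,F}(X)\equiv\sum_i \tilde a_i X_i\pmod{(X_0,\dots,X_d)^2}$ ; comme $\tilde a_i\equiv a_i\pmod{\varpi^n}$, le terme $\sum_i(\tilde a_i-a_i)X_i$ appartient à $\varpi^n(X_0,\dots,X_d)$, et les deux erreurs sont dans $(\varpi^n,X_0,\dots,X_d)^2$. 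Il s'agit d'une vérification de routine.

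Pour (2), l'ingrédient clé est l'identité $[\tilde c\,\tilde a'_i]_{F^{univ}}=[\tilde c]_{F^{univ}}\circ[\tilde a'_i]_{F^{univ}}$ (on prend $\tilde a_i=\tilde c\,\tilde a'_i$ comme relevé de $a_i$), jointe au fait que $[\tilde c]_{F^{univ}}$ est un endomorphisme de groupe formel, donc commute à $+_{F^{univ}}$ : il en résulte $l_{a,F}=[\tilde c]_{F^{univ}}(l_{a',F})$ dans $A_n$. Puisque $c$ est inversible modulo $\varpi^n$, on peut choisir $\tilde c\in\OC_K^\times$, donc $[\tilde c]_{F^{univ}}(T)=T\cdot v(T)$ avec $v\in A_n\llbracket T\rrbracket$ inversible de terme constant $\tilde c$ ; en substituant $T=l_{a',F}$, qui est dans $\mG_{A_n}$ d'après (1), on obtient $l_{a,F}=l_{a',F}\cdot u_c$ avec $u_c:=v(l_{a',F})$ inversible, que l'on relève enfin à $\OC_{\breve K}\llbracket X_0,\dots,X_d\rrbracket$.

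L'assertion (3) constituera l'obstacle principal. En prenant $\tilde a_i=0$ pour $i>j$ (licite puisque $a_i=0$), la $+_{F^{univ}}$-somme définissant $l_{a,F}$ se réduit à $[\tilde a_0]_{F^{univ}}(X_0)+_{F^{univ}}\dots+_{F^{univ}}[\tilde a_j]_{F^{univ}}(X_j)$, qui ne dépend que de $x_0,\dots,x_j$ ; mais la loi de groupe de $F^{univ}$ ayant ses coefficients dans $A_n$ tout entier, cette expression n'est pas manifestement une série en $X_0,\dots,X_j$ seules. Ce qu'il faut établir, c'est que $l_{a,F}(x)$ appartient en fait à l'image $B_j$ de $\OC_{\breve K}\llbracket X_0,\dots,X_j\rrbracket$ dans $A_n$ ; un relevé peut alors être choisi dans $\OC_{\breve K}\llbracket X_0,\dots,X_j\rrbracket$ et la conclusion suit. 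Pour cela, je montrerais que $B_j$ est stable par les opérations de $\OC_K$-module de $F^{univ}$, de façon équivalente que le sous-schéma en $\OC_K$-modules fini et plat de $F^{univ}[\varpi^n]$ engendré par $x_0,\dots,x_j$, muni de la base partielle $(x_0,\dots,x_j)$, est déjà défini sur $B_j$. Ce sous-groupe est le diviseur $\sum_{v\in(\varpi^{-n}/\OC_K)^{j+1}}[l_{v,F}(x)]$, soit une structure de niveau $n$ pour $(\varpi^{-n}/\OC_K)^{j+1}$, et l'on cherchera à identifier $\spf B_j$ au sous-schéma fermé découpé par l'espace de modules correspondant, par exemple par récurrence descendante sur $j$ en quotientant successivement par les sous-groupes $\langle x_{j+1}\rangle,\dots,\langle x_d\rangle$. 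Ce point structurel est essentiellement dû à Yoshida (cf. \cite[3.1]{yosh}) ; une fois acquis, (3) en découle immédiatement.
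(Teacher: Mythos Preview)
The paper does not give its own proof of this proposition; it is simply cited from \cite[Prop.~3.4]{yosh} and then used to deduce Théorème~\ref{theolteq}. Your arguments for (1) and (2) are correct and follow the expected lines: both are routine consequences of the axioms $Y+_F Z\equiv Y+Z\pmod{(Y,Z)^2}$, $[\lambda]_F(T)\equiv\lambda T\pmod{T^2}$, and the multiplicativity $[\tilde c\,\tilde a']_F=[\tilde c]_F\circ[\tilde a']_F$, together with the fact that the $x_i$ generate $\mG_{A_n}$ so that lifts can be chosen compatibly.

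For (3), you correctly isolate the genuine difficulty: the coefficients of $+_{F^{univ}}$ and $[\lambda]_{F^{univ}}$ lie a priori in $A_n$, so the expression $[\tilde a_0]_{F^{univ}}(x_0)+_{F^{univ}}\cdots+_{F^{univ}}[\tilde a_j]_{F^{univ}}(x_j)$ is not manifestly in the subring $B_j$ generated by $x_0,\dots,x_j$. Your instinct to invoke the moduli interpretation (partial level structures, quotients by $\langle x_{j+1}\rangle,\dots,\langle x_d\rangle$) is the right one and is indeed Yoshida's approach. One caution on the sketch as written: the moduli ring $R_j$ classifying deformations of $\Phi_d$ with a partial Drinfeld level-$n$ structure of rank $j{+}1$ is finite flat over $A_0$ and therefore still of Krull dimension $d{+}1$, so it cannot be literally identified with $B_j\cong\OC_{\breve K}\llbracket X_0,\dots,X_j\rrbracket$ when $j<d$. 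The passage from ``$l_{a,F}(x)\in R_j$'' to ``a lift can be chosen in $\OC_{\breve K}\llbracket X_0,\dots,X_j\rrbracket$'' requires Drinfeld's regularity theorem for $R_j$ and a compatible choice of regular parameters along the tower $A_0\subset R_0\subset\cdots\subset R_d=A_n$; this is exactly the structural input you attribute to \cite[3.1]{yosh}. Since the paper itself defers the entire proposition to Yoshida, your proposal is already more explicit than the paper's treatment.
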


Le théorème suivant s'en déduit

\begin{theo}
\label{theolteq}
On a $I_n=(\varpi-P(X))$. Dit autrement, \[A_n=\OC_{\breve{K}} \llbracket X_0 , \dots, X_d \rrbracket/(\varpi-P(X)).\] Ainsi, $\lt^n$ s'identifie  à l'hypersurface de la boule unité rigide ouverte de dimension $d+1$ d'équation $\varpi=P(X)$.
\end{theo}




\section{Généralités sur les éclatements}

Nous commençons par donner quelques faits généraux sur les éclatements. Nous pourrons ainsi construire des modèles convenables de $\lt^1$. 

\begin{defi}
Soit $X$ un schéma algébrique, $\If \subset \Of_X$ un faisceau d'idéaux et $Y\to X$ l'immersion fermée associée.      L'éclatement $\bl_Y (X)$ (ou $\tilde{X}$) de $X$ le long de $Y$  est  l'espace propre sur $X$
\[
p:  \underline{\proj}_X (\oplus_n \If^n) \rightarrow X.
\]
 Le fermé $E:=p^{-1}(Y)=V(\If \Of_{\tilde{X}})=  V^+(\oplus_{n}  \If^{n+1})$ est   appelé le diviseur exceptionnel.  
\end{defi}

\begin{defi}\label{defitrans}
Reprenons les notations précédentes, si $Z\to X$ est une immersion fermée, on note 
\[
\tilde{Z}=\begin{cases}
p^{-1}(Z) & \si Z\subset Y, \\
\overline{p^{-1}(Z\backslash Y)} & \sinon 
\end{cases} 
\] 
Dans le deuxième cas  $\tilde{Z}$ est appelé le transformé stricte de $Z$. 
\end{defi}

Nous énonçons les résultats principaux sur les éclatements que nous utiliserons: 

\begin{theo}
\label{theoBlowUp}
Soit $X$ un schéma algébrique, $Y\to X$ et $Z\to X$ deux immersions fermées, $E\subset \bl_Y(X)=\tilde{X}$ le diviseur exceptionnel. Les points suivants sont vérifiés: 
\begin{enumerate}

\item (Propriété universelle)  $E\rightarrow \tilde{X}$ est un diviseur de Cartier (i.e. $\If \Of_{\tilde{X}}$ est localement libre de rang $1$). Si $	f:T\to X$ est un morphisme tel que $f^{-1}(Y)$ est un diviseur de Cartier, il existe un unique morphisme sur $X$ 
\[
g: T\rightarrow \tilde{X}
\]
tel que $g^{-1}(E)=f^{-1}(Y)$.

\item (Compatibilité avec les immersions fermées) Supposons   que $Z=V(\Jf) \nsubseteq Y$. On a un isomorphisme canonique $\bl_{Z\cap Y}(Z)\cong  \tilde{Z}\subset \tilde{X}$. En particulier, $\tilde{Z}=V^+(\bigoplus_n( \Jf\cap \If^n))$

\item (Compatibilité avec les morphismes plats) Si $f:T\rightarrow X$ est un morphisme plat, alors on a un isomorphisme canonique $\bl_{f^{-1}(Y)} (T) \cong \bl_{Y}(X)\times_{X,f} T$.  En particulier, si $T=X\backslash Y$ la projection $p$ induit un isomorphisme $p^{-1}(X\backslash Y)\rightarrow X\backslash Y$.

\item (Éclatement le long d'une immersion régulière) Si $Y\rightarrow X$ est une immersion régulière alors 
  $p|_E: E\rightarrow Y$ est une fibration  localement triviale en  espace projectif de dimension $d-1$ où $d=\dim X -\dim Y$. De plus,  si $Z$ est irréductible alors $\tilde{Z}$ l'est aussi.

\end{enumerate}

\end{theo}

\begin{proof}
Le point 1. est prouvé dans  \cite[\href{https://stacks.math.columbia.edu/tag/0806}{Tag 0806}]{stp} (voir \href{https://stacks.math.columbia.edu/tag/0805}{Tag 0805} pour le point 3. et \href{https://stacks.math.columbia.edu/tag/080D}{Tag 080D}, \href{https://stacks.math.columbia.edu/tag/0806}{Tag 080E} pour le point 2.).  Supposons maintenant que  $Y\rightarrow X$ est  une immersion régulière. On a 
\[
E= \underline{\proj}_X(\Of_{\tilde{X}}/ \If \Of_{\tilde{X}})= \underline{\proj}_X(\bigoplus_{n} \If^n/ \If^{n+1})=\underline{\proj}_{X}(\underline{\sym}_{X} \Nf_{Y/X}^\vee )
\]
avec $\Nf_{Y/X}^\vee$ le faisceau conormal de $Y$ sur $X$. L'hypothèse de régularité entraîne le caractère localement libre de  $\Nf_{Y/X}^\vee$ ce qui montre que $p|_E:E\to Y$ est une fibration localement triviale en  espace projectif de dimension $d-1$.   

Prenons  $Z\subset X$ un fermé irréductible.  Si $Z\subset Y$ alors $p^{-1}(Z)\rightarrow Z$ est une fibration localement triviale dont la base et la fibre sont irréductibles. Ainsi $\tilde{Z}=p^{-1}(Z)$ est irréductible.  Si $Z\nsubseteq Y$ alors $\tilde{Z}$ est la clôture de $p^{-1}(Z\backslash Y)\cong Z\backslash Y$ (d'après le point 2.).  Mais $Z\backslash Y$ est    irréductible  en tant qu'ouvert de $Z$,  d'où l'irréductibilité de $\tilde{Z}$. 
\end{proof}

\begin{rem}\label{rembladm}
Si $X$ est une schéma formel  localement noethérien muni de la topologie $\If$-adique, et $Z\subset X$ est un fermé de la fibre spéciale, on peut définir l'éclatement admissible:
\[
\bl_Z(X)= \varinjlim_k \bl_{Z\times_X V(\If^k)}(V(\If^k))
\]
où $V(\If^k)$ est le schéma fermé définit par $\If^k$.
\end{rem}

\section{Transformée stricte et régularité}

Le but de cette section est de comprendre comment se comporte les immersions fermées régulières $Z_1\to Z_2$ lorsque l'on prend la transformée stricte de deux fermés $Z_1$, $Z_2$ d'un espace $X$ que l'on éclate. Le théorème suivant donne des critères précis pour assurer que l'immersion fermée obtenue entre les transformées $\tilde{Z}_1\to \tilde{Z}_2$ reste encore régulière. Il y est donné aussi le calcul de la transformée d'une intersection de fermé dans des cas particuliers. 

\begin{theo}\label{theoblreg}

Donnons-nous un schéma algébrique $X$ et des fermés $Y,Z_1,Z_2\subset X$ et écrivons $p:\tilde{X}=\bl_Y (X)\to X$ (en particulier, $\tilde{Y}$ est le diviseur exceptionnel). On a les points suivants : 

\begin{enumerate}
\item Si on a  $Y\subset Z_1$, $Z_1\subset Z_2$, $Z_2\subset X$ et si ces inclusions  sont des immersions régulières, alors les immersions $\tilde{Y}\cap \tilde{Z}_1\subset \tilde{Z}_1$, $\tilde{Z}_1\subset \tilde{Z}_2$ sont régulières de codimension $1$ pour la première et $\codim_{Z_2}Z_1 $ pour la seconde. En particulier, la flèche $\tilde{Z}_1\subset \tilde{X}$ est régulière en considérant le cas $\tilde{Z}_2=\tilde{X}$.
\item Si on a les immersions régulières suivantes $Y\subset Z_1\cap Z_2$, $Z_1\cap Z_2\subset Z_i$, $Z_i\subset X$ alors \[\tilde{Z}_1\cap \tilde{Z}_2=\begin{cases}
\emptyset &\si Z= Z_1\cap Z_2,\\
\widetilde{Z_1\cap Z_2} &\sinon
\end{cases}\]
\item Si $Z_1$ et $Y$ sont transverses et si les inclusions  $Z_1\cap Y\subset Y$, $Y\subset X$  sont des immersions régulières, alors $\tilde{Z}_1=p^{-1}(Z_1)$ et $\tilde{Z}_1\cap \tilde{Y}\subset \tilde{Y}$ est régulière de codimension $\codim_Y (Z_1\cap Y)$. 
\item Si $Z_1$ et $Y$ sont transverses et $Y\subset  Z_2$, $Z_2\subset X$ sont des immersions régulières, alors $\tilde{Z}_1\cap \tilde{Y}$ et $\tilde{Z}_2$  sont transverses.
\end{enumerate}

\end{theo}

\begin{rem}\label{remsuitreg}

Les hypothèses des points précédents peuvent se réécrire en termes de suites régulières. Par exemple, les deux premiers points supposent localement l'existence d'une suite régulière $(x_1,\cdots,x_s)$ et de deux sous-ensembles $S_1, S_2 \subset \left\llbracket 1,s\right\rrbracket$ (avec $S_2\subset S_1$ pour le premier point) tel que $Y=V(x_1,\cdots,x_s)$, $Z_i=V((x_j)_{j\in S_i})$ pour $i=1,2$. Pour le troisième point, on demande localement en plus de la suite régulière $(x_1,\cdots,x_s)$ l'existence d'une partition $\left\llbracket 1,s\right\rrbracket=T\amalg S$ tel que $Y=V((x_j)_{j\in T})$ et $Z_1=V((x_j)_{j\in S})$ et le dernier point est une synthèse des cas précédents. Nous laissons au lecteur le soin de trouver des interprétations similaires aux conclusions de l'énoncé en termes de suites régulières locales. 

\end{rem}
\begin{proof}

On remarque que les conclusions du théorème peuvent se vérifier localement. De plus, on a pour tout ouvert $U=\spec (A)\subset X$ affine d'après \ref{theoBlowUp} 2., 3.:
\[
p^{-1}(U)=\bl_{U\cap Y} U \et  \tilde{Z}_i\cap p^{-1}(U) =\bl_{Z_i\cap U\cap Y} Z_i\cap U =\widetilde{Z_i\cap U},
\]
on peut se ramener à étudier les objets qui vont suivre quand  $X=U=\spec (A)$ et $Y\subset U$. Donnons-nous une suite régulière $(x_1,\cdots, x_s)$ dans $A$, et introduisons  pour tout sous-ensemble $S\subset \left\llbracket 1,s\right\rrbracket$, des idéaux $I_S =\sum_{j\in S} x_j A$ et des fermés $Z_S=V(I_S)$. On fixe $S_0$ et on pose $Y=Z_{S_0}$, $p:\tilde{X}=\bl_{Y}(X)\to X$. On construit comme dans \ref{defitrans} les transformées strictes $\tilde{Y}_S$ de $Y_S$ (pour $S_0\nsubseteq S$) et on note $\tilde{\If}_S\subset \Of_{\tilde{X}}$ les idéaux associés. D'après la Remarque \ref{remsuitreg}, il suffit de prouver le résultat local suivant : 

\begin{lem}
\label{lemReg2}
En reprenant les notations précédentes $X=\spec A$, $Y$, $Z_S $, $p:\tilde{X}\to X$, $\tilde{Y}$, $\tilde{Z}_S$,  on a les points suivantes 

\begin{enumerate}

\item Si $S_1$ est une partie de $S_0$ et $n \geq 1$, on a $I_{S_1}\cap I_{S_0}^n= I_{S_1} I_{S_0}^{n-1}$. En particulier, si $S_1$, $S_2$ sont  deux parties de $S_0$, on a \[\tilde{Z}_{S_1}\cap \tilde{Z}_{S_0}=\begin{cases}
\emptyset &\si S_0= S_1\cup S_2,\\
\tilde{Z}_{S_1\cup S_0} &\sinon
\end{cases}\]

\item Si $S_1$ est une partie de $\left\llbracket 1,s\right\rrbracket$ disjointe de $S_0$ et $n \geq 1$, on a $I_{S_1}\cap I_{S_0}^n=I_{S_1} I_{S_0}^n $. Dans ce cas, on a  $ \tilde{Z}_{S_1}= p^{-1}(Z_{S_1})$.

\item Il existe un recouvrement affine de $\tilde{X}= \bigcup_{i\in S_0} U_i$ tel que pour tout $i$, il existe   une suite   régulière $(\tilde{x}_1^{(i)},  \ldots,   \tilde{x}_s^{(i)})\in \Of(U_i)$ vérifiant 

\begin{itemize}

\item[•] $\widetilde{Z_{S_1}}\cap U_i =\emptyset$ si $i\in S_1\subsetneq S_0$. 

\item[•] $ V((\tilde{x}_j)_{j\in S_1})=\tilde{Z}_{S_1}\cap U_i$ si $S_1\subset S_0\backslash\{i\}$ ou si $S_1\cap S_0 =\emptyset$.  

\item[•]   $\tilde{Y}\cap U_i= V(\tilde{x}_i)$.

\end{itemize}

\item  Si $S_1$ est une partie stricte  de $S_0$ et $S_2$ est disjointe de $S_0$, on a $\tilde{Z}_{S_1}\cap \tilde{Z}_{S_2}$ (resp. $\tilde{Z}_{S_0}\cap \tilde{Z}_{S_1}\cap \tilde{Z}_{S_2}$) est de codimension $|S_1|+|S_2|$ (resp.  $|S_1|+|S_2|+1$). 
\end{enumerate}

\end{lem}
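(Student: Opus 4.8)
The plan is to reduce all four points to explicit computations in the standard affine charts of the blow‑up of the regular centre $Y=Z_{S_0}$, after first disposing of the two ideal‑theoretic identities stated in points 1 and 2. For point 1, I would prove $I_{S_1}\cap I_{S_0}^n = I_{S_1}I_{S_0}^{n-1}$ (for $\emptyset\neq S_1\subseteq S_0$, $n\geq 1$) by induction on $|S_1|$, the inclusion $\supseteq$ being obvious since $I_{S_1}\subseteq I_{S_0}$. For the base case $S_1=\{j\}$ one localizes at a maximal ideal: if it does not contain $I_{S_0}$ both sides become $I_{S_1}$, and otherwise $\gr_{I_{S_0}}(A)$ is the polynomial ring $(A/I_{S_0})[\xi_i:i\in S_0]$ (since $(x_i)_{i\in S_0}$ is $A$‑regular), in which $\xi_j$ is a non‑zero‑divisor, so multiplication by $x_j$ raises the $I_{S_0}$‑adic order by exactly one and, $x_j$ being regular, $x_jA\cap I_{S_0}^n = x_jI_{S_0}^{n-1}$. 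The induction step passes to $A/I_{S_1'}$ for $S_1=S_1'\sqcup\{j\}$, where $(x_i)_{i\in S_0\setminus S_1'}$ is still regular, applying the base case there and then the induction hypothesis to $S_1'$. For point 2 ($S_1\cap S_0=\emptyset$) I would instead use the classical isomorphism $\mathrm{Tor}_1^A(A/I_{S_1},A/I_{S_0}^n)\cong (I_{S_1}\cap I_{S_0}^n)/I_{S_1}I_{S_0}^n$: since $(x_i)_{i\in S_1}$ is regular on each graded piece $I_{S_0}^m/I_{S_0}^{m+1}$ of $A/I_{S_0}^n$ (a free $A/I_{S_0}$‑module), it is regular on $A/I_{S_0}^n$, and the Koszul complex resolving $A/I_{S_1}$ kills this $\mathrm{Tor}$.

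The geometric consequences then fall out using \ref{theoBlowUp} 2., which says that the strict‑transform ideal is $\tilde\If_S=\bigoplus_n(I_S\cap I_{S_0}^n)$. In point 2 this equals $I_{S_1}\cdot\bigoplus_n I_{S_0}^n$, the pullback of $I_{S_1}$, whence $\tilde Z_{S_1}=p^{-1}(Z_{S_1})$. In point 1 the part of degree $n\geq 1$ of $\tilde\If_{S_1}+\tilde\If_{S_2}$ is $I_{S_1}I_{S_0}^{n-1}+I_{S_2}I_{S_0}^{n-1}=I_{S_1\cup S_2}I_{S_0}^{n-1}$ (by point 1 again): when $S_1\cup S_2=S_0$ this contains $\bigoplus_{n\geq 1}I_{S_0}^n$, so $\tilde Z_{S_1}\cap\tilde Z_{S_2}=V^+(\tilde\If_{S_1}+\tilde\If_{S_2})=\emptyset$, and otherwise $\tilde\If_{S_1}+\tilde\If_{S_2}=\tilde\If_{S_1\cup S_2}$, giving $\tilde Z_{S_1}\cap\tilde Z_{S_2}=\tilde Z_{S_1\cup S_2}$.

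For point 3 I would write $\bl_Y(X)$ as the $\proj$ of the Rees algebra, which, $(x_i)_{i\in S_0}$ being regular, is $A[T_i:i\in S_0]/(x_iT_j-x_jT_i)$, and take the standard affine cover by the charts $U_i=\spec A[y_j:j\in S_0\setminus\{i\}]/(x_j-x_iy_j)$ with $y_j=T_j/T_i$. On $U_i$ I claim the $s$‑tuple $\tilde x^{(i)}$ given by $\tilde x_i=x_i$, $\tilde x_j=y_j$ for $j\in S_0\setminus\{i\}$ and $\tilde x_j=x_j$ for $j\notin S_0$ is a regular sequence: indeed $(x_j-x_iy_j)_{j\in S_0\setminus\{i\}}$ is a regular sequence in $A[y_j:j\in S_0\setminus\{i\}]$ (a McCoy‑style non‑zero‑divisor bookkeeping using that $x_i$ is regular), and one checks that in the quotient $U_i$ the elements $x_i$, the $y_j$ and the $x_k$ ($k\notin S_0$) generate the same ideal as the regular sequence $(x_1,\dots,x_s, y_j:j\in S_0\setminus\{i\})$ of the polynomial ring, which forces their regularity. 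The three items then follow: $\tilde Y\cap U_i=V(I_{S_0}\Of_{U_i})=V(x_i)=V(\tilde x_i)$; if $i\in S_1\subsetneq S_0$ then $x_i\in I_{S_1}$ yields $x_i/T_i=1\in\tilde\If_{S_1}\Of_{U_i}$, so $\tilde Z_{S_1}\cap U_i=\emptyset$; and if $S_1\subseteq S_0\setminus\{i\}$ or $S_1\cap S_0=\emptyset$, dividing the generators of $\tilde\If_{S_1}$ by $x_i$ gives $\tilde\If_{S_1}\Of_{U_i}=(\tilde x_j:j\in S_1)$, i.e. $\tilde Z_{S_1}\cap U_i=V(\tilde x_j:j\in S_1)$.

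Finally, point 4 follows by covering $\tilde Z_{S_1}\cap\tilde Z_{S_2}$ (resp. the triple intersection with $\tilde Z_{S_0}=\tilde Y$), for $S_1\subsetneq S_0$ and $S_2$ disjoint from $S_0$, by the charts $U_i$ with $i\in S_0\setminus S_1$, which by the first two items of point 3 are the only ones $\tilde Z_{S_1}$ meets; on such a chart this intersection is $V(\tilde x_j:j\in S_1\sqcup S_2)$, resp. $V(\tilde x_j:j\in\{i\}\sqcup S_1\sqcup S_2)$, which, $\tilde x^{(i)}$ being regular, $S_1$ and $S_2$ disjoint and $i\notin S_1\cup S_2$, is regularly embedded of codimension $|S_1|+|S_2|$, resp. $|S_1|+|S_2|+1$. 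The main obstacle is concentrated in the preliminary work: obtaining the two ideal identities of points 1 and 2 cleanly, and above all verifying that the "slope" coordinates $\tilde x^{(i)}$ genuinely form a regular sequence on each chart; once that is secured, points 3 and 4 are pure bookkeeping.
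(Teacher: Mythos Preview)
Your proposal is correct and the overall architecture matches the paper: first establish the two ideal identities, derive the strict-transform formulas via \ref{theoBlowUp} 2., then work on the standard charts $U_i=D^+(x_i^{[1]})$ to exhibit the regular sequence $\tilde x^{(i)}$, and read off point~4 from point~3. Where you genuinely diverge is in the proofs of the two technical ingredients.

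For the identities in points~1 and~2, the paper does not use the associated graded or Tor/Koszul. Instead it isolates a single elementary fact (for $T_1\subset T_2$ and any ideal $J$, one has $I_{T_1}J\cap I_{T_2}^n=I_{T_1}(J\cap I_{T_2}^{n-1})$, proved by observing that $I_{T_2}/I_{T_2}^n\cong\bigoplus_{t\in T_2}x_t\,A/I_{T_2}^{n-1}$ for a regular sequence) and uses it to reduce both identities to $n=1$. The case $n=1$ of point~1 is trivial; the case $n=1$ of point~2 is handled by an explicit induction on $|S_1|$ via a short snake-lemma diagram. Your approach---grading/Tor/Koszul---is more conceptual and arguably quicker, while the paper's is entirely elementary and self-contained (no homological input beyond the definition of a regular sequence).

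For the regularity of $(\tilde x_j^{(i)})_j$ on $U_i$, the paper argues by induction on $|S_0|$: one shows $\tilde x_{j_0}^{(i)}$ is regular for some $j_0\neq i$, identifies $U_i\cap\widetilde{V(x_{j_0})}$ with a chart of the blow-up of $V(x_{j_0})$ along a shorter regular sequence (using \ref{theoBlowUp} 2.), and concludes by induction; the tail $(x_k)_{k\notin S_0}$ is then regular on $\Of(U_i)/(\tilde x_j^{(i)})_{j\in S_0}\cong A/I_{S_0}$. Your route---two length-$(s+|S_0|-1)$ generating sets for the same ideal in $A[y_j]$, one of which is visibly regular, hence so is the other---is valid because for any $n$ generators of an ideal of grade $n$ in a Noetherian ring the Koszul complex is acyclic in positive degrees; but you should make that depth/grade step explicit rather than writing ``which forces their regularity'', since equal generation alone does not literally force it without the Noetherian grade argument. (Alternatively, and more directly: note that $x_i$ is a non-zero-divisor on $\Of(U_i)$ because $\Of(U_i)\hookrightarrow A[1/x_i]$, and that $\Of(U_i)/x_i\cong (A/I_{S_0})[y_j:j\in S_0\setminus\{i\}]$, on which $(y_j)_j$ and then $(x_k)_{k\notin S_0}$ are obviously regular; this bypasses both the McCoy bookkeeping and the same-ideal trick.)
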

\end{proof}

\begin{proof}

Commençons par prouver pour tout $n>1$, les égalités entre idéaux suivantes pour $S_1\subset\left\llbracket 1,s\right\rrbracket$  :
\[I_{S_1}\cap I_{S_0}^n=\begin{cases} I_{S_1} I_{S_0}^{n-1} &\si S_1\subset S_0\\
I_{S_1} I_{S_0}^n & \si S_1\cap S_0 =\emptyset
\end{cases}\]

Nous commençons par cette observation utile qui permettra de réduire la preuve du résultat au cas $n=1$.
\begin{claim}\label{claimreg}

Reprenons la suite régulière $(x_1,\cdots,x_s)$ dans $A$, pour  $T_1\subset T_2\subset\left\llbracket 1,s\right\rrbracket$ et n'importe quel idéal $J$, on a l'identité suivante :
\[I_{T_1}J \cap I_{T_2}^n=I_{T_1} (J\cap I_{T_2}^{n-1})\]

\end{claim}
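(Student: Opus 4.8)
L'inclusion $\supseteq$ est immédiate : $I_{T_1}(J\cap I_{T_2}^{n-1})$ est contenu dans $I_{T_1}J$, et comme $T_1\subset T_2$ donne $I_{T_1}\subset I_{T_2}$, il est aussi contenu dans $I_{T_1}I_{T_2}^{n-1}\subset I_{T_2}\cdot I_{T_2}^{n-1}=I_{T_2}^{n}$. Tout le contenu est donc dans l'inclusion réciproque ; le plan est de la démontrer en passant au gradué associé de $A$ pour la filtration $I_{T_2}$-adique. On utilisera le fait classique que, $I_{T_2}$ étant engendré par la suite régulière $(x_i)_{i\in T_2}$, la surjection naturelle $(A/I_{T_2})[Y_i:i\in T_2]\twoheadrightarrow\gr_{I_{T_2}}(A)$, $Y_i\mapsto\mathrm{cl}(x_i)$, est un isomorphisme : en particulier chaque $\mathrm{cl}(x_i)$ est un non-diviseur de zéro dans $\gr_{I_{T_2}}(A)$, d'où le principe d'ordre $x_i\xi\in I_{T_2}^{m+1}\Rightarrow\xi\in I_{T_2}^{m}$ pour $i\in T_2$, et la sous-famille $(\mathrm{cl}(x_i))_{i\in T_1}$ (partie d'un système de variables polynomiales) y est une suite régulière, donc à complexe de Koszul exact.

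Ces deux observations règlent déjà le cas $|T_1|=1$ : si $z=x_{i_0}a$ avec $a\in J$ appartient à $I_{T_2}^{n}$, le principe d'ordre donne $a\in I_{T_2}^{n-1}$, donc $z\in x_{i_0}(J\cap I_{T_2}^{n-1})$. Pour $|T_1|$ quelconque, je procéderais par récurrence sur $|T_1|$ en retranchant un indice $i_0\in T_1$ et en passant à $\bar A=A/(x_{i_0})$, où $(\bar x_i)_{i\in T_2\setminus\{i_0\}}$ reste une suite régulière et $T_1\setminus\{i_0\}\subset T_2\setminus\{i_0\}$. Partant d'une écriture $z=\sum_{i\in T_1}x_ia_i$ avec $a_i\in J$, on applique l'hypothèse de récurrence à $\bar z=\sum_{i\in T_1\setminus\{i_0\}}\bar x_i\bar a_i\in\bar I_{T_2}^{n}$ afin de réécrire $z$, modulo $x_{i_0}$, avec des coefficients dans $\bar J\cap\bar I_{T_2}^{n-1}$ ; on les relève dans $J$, on récupère le défaut en $x_{i_0}$, qui par le principe d'ordre (appliqué avec $i_0\in T_2$) tombe automatiquement dans $I_{T_2}^{n-1}$, et l'exactitude du complexe de Koszul sur $(\mathrm{cl}(x_i))_{i\in T_1}$ permet d'éliminer un à un les termes dominants des $a_i$ par un système antisymétrique de corrections, aboutissant à l'écriture voulue $z=\sum_{i\in T_1}x_ib_i$ avec $b_i\in J\cap I_{T_2}^{n-1}$.

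L'étape qui me paraît être le véritable obstacle est la gestion de l'idéal $J$ au cours de ces corrections : a priori les termes correctifs de Koszul ne sont que dans $I_{T_2}^{n-1}$, pas dans $J$, et il faut choisir les décompositions de façon à ce que les coefficients modifiés restent dans $J$. C'est là que l'hypothèse $T_1\subset T_2$ est réellement utilisée --- sans elle l'énoncé est faux, par exemple $I_{\{1,2\}}\cap I_{\{2\}}=(x_1,x_2)\cap(x_2)=(x_2)$ diffère de $I_{\{1,2\}}(A\cap A)=(x_1,x_2)$ dans $A=k[x_1,x_2]$. Concrètement je m'attends à devoir isoler un lemme élémentaire du type \textit{tout élément de $J$ appartenant à $I_{T_2}^{n-1}+(x_{i_0})$ s'écrit comme somme d'un élément de $J\cap I_{T_2}^{n-1}$ et de $x_{i_0}$ fois un élément de $J$}, lemme pour lequel l'identité auxiliaire $(I_{T_2}^{n-1}:x_{i_0})=I_{T_2}^{n-2}$ (à nouveau le principe d'ordre, car $i_0\in T_2$) est le point de départ. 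Une fois l'énoncé établi, les égalités $I_{S_1}\cap I_{S_0}^{n}=I_{S_1}I_{S_0}^{n-1}$ (resp.\ $I_{S_1}I_{S_0}^{n}$) s'en déduisent par récurrence sur $n$, l'énoncé servant précisément à faire traverser l'intersection à une puissance de $I_{S_0}$ à chaque pas.
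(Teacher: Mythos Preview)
Le papier procède de façon bien plus expéditive : étant donnée une décomposition $y=\sum_{t\in T_2}x_t a_t$, il affirme que la régularité fournit l'équivalence $y\in I_{T_2}^n \Leftrightarrow a_t\in I_{T_2}^{n-1}$ pour chaque $t$, d'où le résultat en une ligne (si $a_t\in J$ et $y\in I_{T_2}^n$, alors $a_t\in J\cap I_{T_2}^{n-1}$). Or l'implication $\Rightarrow$ est fausse pour une décomposition fixée : avec $T_2=\{1,2\}$, $a_1=x_2$, $a_2=-x_1$, on a $y=0\in I_{T_2}^n$ pour tout $n$ mais $a_1\notin I_{T_2}^{n-1}$ dès que $n\ge 3$. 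L'isomorphisme $I_{T_2}/I_{T_2}^n\cong\bigoplus_t x_t\,A/I_{T_2}^{n-1}$ invoqué dans la parenthèse est d'ailleurs faux lui aussi (sur $k[x_1,x_2]$ avec $n=3$, les dimensions sont $5$ et $6$). Ce que la régularité donne réellement est l'existence d'une \emph{autre} décomposition avec coefficients dans $I_{T_2}^{n-1}$ --- mais on perd alors l'appartenance à $J$, et réconcilier les deux est précisément ton problème de corrections de Koszul.

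Ta démarche par gradué associé et complexe de Koszul est donc plus prudente que celle du papier, et ton cas $|T_1|=1$ est parfaitement correct ; mais pour $|T_1|\ge 2$ ta preuve reste inachevée, comme tu le reconnais. Le lemme auxiliaire que tu pressens --- scinder un élément de $J\cap(I_{T_2}^{n-1}+(x_{i_0}))$ en un élément de $J\cap I_{T_2}^{n-1}$ plus $x_{i_0}$ fois un élément de $J$ --- n'est pas gratuit pour $J$ quelconque. Autrement dit, l'obstacle que tu identifies est exactement celui que la preuve du papier escamote par son affirmation erronée ; aucune des deux approches ne le franchit en l'état. Une piste pour avancer : traiter directement les deux seuls $J$ effectivement utilisés dans la suite du lemme, à savoir $J=I_{S_0}^{m}$ et $J=I_{S_2}$ avec $S_2\cap S_0=\emptyset$, où la structure supplémentaire (tout est engendré par des sous-suites d'une même suite régulière) devrait permettre de choisir les corrections de Koszul à l'intérieur de $J$.
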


\begin{proof}

Prenons un élément $x\in I_{T_2}$  et écrivons-le sous la forme :
\[
y=\sum_{t\in T_2} x_t a_t.
\]
L'hypothèse de régularité de la suite $(x_1,\cdots,x_s)$ entraîne que l'élément $y$ est dans $I^n_{T_2} $ si et seulement si chaque terme $a_t$ est dans $ I^{n-1}_{T_2}$ (dit autrement $I^{}_{T_2}/I^{n}_{T_2}\cong \bigoplus_{t\in T_2}x_{t} A/I^{n-1}_{T_2}$). En particulier, si  chaque $a_t$ est dans $J$ (c'est-à-dire $y\in I_{T_2}J$) et $y\in I_{T_2}^n$ alors  nous avons $a_t\in J\cap I_{T_2}^{n-1}$ ce qui montre l'inclusion $(I_{T_2}^{n-1} J )\cap I^n_{T_2} \subset I_{T_2}(J\cap I_{T_2}^{n-1})$ quand $T_1=T_2$. Celle dans l'autre sens étant triviale, on en déduit le résultat dans ce cas.

Si, de plus, $a_t=0$ quand $t\notin T_1$ (ie. $y\in (I_{T_1}J)\cap  I_{T_2}^n$) alors $y\in I_{T_1}(J\cap I_{T_2}^{n-1})$) grâce au raisonnement précédent. Le cas plus général s'en déduit.

\end{proof}
Utilisons cette observation pour prouver les égalités précédentes  et supposons qu'elle sont vraies au rang $n$ pour $n\ge 1$ (ainsi qu'au rang 1). On a alors pour $S_1\subset \left\llbracket 1,s\right\rrbracket$ 
\[I_{S_1}\cap I_{S_0}^{n+1}=I_{S_1}\cap I_{S_0}^{n}\cap I_{S_0}^{n+1}= (I_{S_1} I_{S_0}^{n-1})\cap I_{S_0}^{n+1}=I_{S_1} (I_{S_0}^{n-1}\cap I_{S_0}^{n})=I_{S_1} I_{S_0}^{n}\]
si $S_1\subset S_0$ et
\[I_{S_2}\cap I_{S_0}^{n+1}=I_{S_2}\cap I_{S_0}\cap I_{S_0}^{n+1}=(I_{S_0} I_{S_2} )\cap I_{S_0}^{n+1}=I_{S_0} (I_{S_2}\cap I_{S_0}^{n})=I_{S_2} I_{S_0}^{n+1}\] si $S_1\cap S_0 =\emptyset$. On en déduit le résultat au rang $n+1$.

Nous nous sommes ramenés au cas $n=1$ où on a clairement $I_{S_1}\cap I_{S_0}=I_{S_1}$ si $S_1\subset S_0$ ce qui prouve la première équation. Quand $S_1\cap S_0 =\emptyset$, on raisonne par récurrence sur $|S_1|$. Si $|S_1|=0$, on pose $I_{S_1}=0$ et le résultat est trivial.  Supposons le résultat vrai pour pour les parties strictes de $S_1\neq\emptyset$ et fixons $j_0\in I_{S_1}$.  Par hypothèse de récurrence, on a  $I_{S_1\backslash\{j_0\}}\cap I_{S_0}=I_{S_1\backslash\{j_0\}} I_{S_0}$. Il suffit de  montrer que  la flèche naturelle $ I_{S_1}I_{S_0}/I_{S_1\backslash\{j_0\}}I_{S_0} \to I_{S_1}\cap I_{S_0}/I_{S_1\backslash\{j_0\}}\cap I_{S_0}$ est un isomorphisme par théorème de Noether.  Observons le diagramme de $A$-modules commutatif suivant  
\[
\begin{tikzcd}
I_{S_1}I_{S_0} \ar[r] & I_{S_1}\cap I_{S_0} \\   I_{S_0} \ar[r] \ar[u, "\times x_{j_0}"]&  I_{S_0} \ar[u, "\times x_{j_0}"']
\end{tikzcd}
\]
et montrons qu'il induit un diagramme commutatif (nous allons justifier que les flèches verticales sont bien définies) 
\[
\begin{tikzcd}
I_{S_1}I_{S_0}/ I_{S_1\backslash\{j_0\}}I_{S_0}  \ar[r] & (I_{S_1}\cap I_{S_0})/( I_{S_1\backslash\{j_0\}}\cap I_{S_0} )\\ 
I_{S_0}/( I_{S_0}\cap I_{S_1\backslash\{j_0\}}) \ar[r] \ar[u, "\times x_{j_0}"]& (I_{S_0}+I_{S_1\backslash\{j_0\}})/ I_{S_1\backslash\{j_0\}} \ar[u, "\times x_{j_0}"']
\end{tikzcd}
\]
La flèche horizontale inférieure est clairement bijective, et nous allons utiliser la régularité de $x_r$ pour montrer que les flèches verticales sont bien définies et sont des isomorphismes.

   Par définition de $I_{S_1\backslash\{j_0\}}$,  la flèche  $b\in I_{S_0} \mapsto x_{j_0} b\in I_{S_1}I_{S_0}/I_{S_1\backslash\{j_0\}}I_{S_0}$ est surjective. Si $b$ est dans le noyau, $x_{j_0} b\in I_{S_1\backslash\{j_0\}}$ d'où $b\in I_{S_1\backslash\{j_0\}}\cap I_{S_0}=I_{S_1\backslash\{j_0\}}I_{S_0}$ par régularité et hypothèse de récurrence.   Ainsi la multiplication par $x_{j_0}$ induit un isomorphisme $I_{S_0}/I_{S_1\backslash\{j_0\}} I_{S_0} \iso I_{S_1}I_{S_0} /I_{S_1\backslash\{j_0\}}I_{S_0}$. 

Étudions   $M:=(I_{S_1}\cap I_{S_0})/( I_{S_1\backslash\{j_0\}}\cap I_{S_0} )$.  On observe le diagramme commutatif dont les deux lignes horizontales sont exactes
\[
\begin{tikzcd}
0 \ar[r] & I_{S_1}\cap I_{S_0} \ar[r] &   I_{S_1} \oplus I_{S_0} \ar[r] & I_{S_1}+I_{S_0}  \ar[r] & 0 \\
0 \ar[r] & I_{S_1\backslash\{j_0\}}\cap I_{S_0} \ar[r] \ar[u, hook] &   I_{S_1\backslash\{j_0\}} \oplus I_{S_0} \ar[r] \ar[u, "\iota_1", hook] & I_{S_1\backslash\{j_0\}}+I_{S_0}  \ar[r]  \ar[u, "\iota_2", hook]& 0 
\end{tikzcd}
\]
Par régularité, on  a $\coker \iota_1 = I_{S_1}/I_{S_1\backslash\{j_0\}}=x_{j_0} (A /I_{S_1\backslash\{j_0\}}) $ et $\coker \iota_2\cong  x_{j_0} (A/(I_{S_1\backslash\{j_0\}}+I_{S_0}))$.  Comme les flèches verticales sont injectives,  \[M\iso\ker( A/I_{S_1\backslash\{j_0\}}\fln{}{} A/(I_{S_1\backslash\{j_0\}}+I_{S_0}))= (I_{S_1\backslash\{j_0\}}+I_{S_0})/I_{S_1\backslash\{j_0\}}\] ce qui termine l'argument par récurrence.

Maintenant,  terminons la  preuve de 1. et 2. grâce aux égalités précédentes. Le résultat pour 2. est clair, passons à 1. Pour cela, raisonnons dans un cadre un peu plus général. Pour $J_1$, $J_2$, $J_3$ des idéaux d'un anneaux $B$, on a toujours $(J_1+J_2)J_3=J_1J_3+J_2J_3$ mais rarement $(J_1+J_2)\cap J_3=J_1\cap J_3+J_2\cap J_3$ sauf si on a par exemple un idéal ${J'_3}$ tel que  $(J_1+J_2)\cap J_3=(J_1+J_2) J'_3$ et $J_i\cap J_3=J_i J'_3$ pour $i=1,2$.  Un raisonnement similaire au cas général précédent  permet d'établir si  \[\bigoplus_n I_{S_1\cup S_2}\cap I_{S_0}^n=\bigoplus_n (I_{S_1}+I_{S_2}) I_{S_0}^{n-1}=\bigoplus_n I_{S_1} I_{S_0}^{n-1}+\bigoplus_n I_{S_2} I_{S_0}^{n-1}=\bigoplus_n I_{S_1}\cap I_{S_0}^{n}+\bigoplus_n I_{S_2}\cap I_{S_0}^{n}\] quand $S_1\cup S_2\subset S_0$. En terme de fermé de $\tilde{X}$, cela se traduit par $ \tilde{Y}_{S_1\cup S_2}=\tilde{Y}_{S_1}\cap  \tilde{Y}_{S_2}$ si $S_0\neq S_1\cup S_2$. Le cas  $S_0= S_1\cup S_2$ sera montré plus tard (pas de risque d'argument circulaire). 

Passons  au point 3.  Notons $\tilde{A}=A\oplus I \oplus I^2\oplus\cdots$ et si $x\in I^n$,  on note $x^{[n]}\in \tilde{A}$ l'élément $x$ vu comme un élément homogène de degré $n$.   Considérons le recouvrement $\tilde{X}=\bigcup_{i\in S_0} U_i$ avec $U_i=D^+(x_i^{[1]})$. On a  \[\widetilde{V(x_j)}=V^+(\bigoplus_n x_jI^{n-1})= V^+(x_j^{[0]}\tilde{A}+ x_j^{[1]} \tilde{A} )\] si  $j\in S_0$ d'après le point $1$ et \[\widetilde{V(x_j)}=V^+(\bigoplus_n x_jI^{n})= V^+(x_j^{[0]}\tilde{A})\] si $j\notin S_0$ d'après le point $2$.  Ainsi, d'après l'égalité dans $\tilde{A}$: $x_j^{[0]}x_i^{[1]}-x_j^{[1]}x_i^{[0]}$, on a :
\begin{itemize}
\item[•]$\widetilde{V(x_i)}\cap U_i=\emptyset$, 
\item[•]$\tilde{Y}_{S_0} =V(x_i^{[0]})$
\item[•]$ \widetilde{V(x_j)}\cap U_i= V\left(\frac{x_j^{[1]}}{x_i^{[1]}}\right) $  pour $j\in S_0\backslash\{i\}$,
\item[•]$ \tilde{Y}_j\cap U_i= V\left({x_j^{[0]}}\right)\cap U_i$ si $j\notin S_0$.
\end{itemize} 
Posons alors $\tilde{x}_j^{(i)}=\frac{x_j^{[1]}}{x_i^{[1]}}$ si $j\in S_0\backslash\{i\}$,  $\tilde{x}_j^{(i)}=x_j^{[0]}$ si $j\notin S_0$ et $\tilde{x}_i^{(i)}=x_i^{[0]}$. On a par construction \[\tilde{Y}_{\{j\}}\cap U_i= V\left({x_j^{(i)}}\right)\] pour $1\le j\le s$. De plus, on a grâce aux points 1. et 2. pour  $S_1\subsetneq S_0\backslash\{i\}$ ou $S_1\cap S_0=\emptyset$ \[\tilde{Y}_{S_1}=\bigcap_{j\in S_1} \tilde{Y}_{\{j\}}=V\left({x_j^{(i)}}\right)_{j\in S_1}\] et \[\tilde{Y}_{S_0}\cap \tilde{Y}_{S_1}=V\left({x_j^{(i)}}\right)_{j\in S_1\cup \{i\}}.\]

Il reste à prouver  que $(\tilde{x}^{(i)}_j)_{j\in \left\llbracket 1,s\right\rrbracket}$ est régulière pour montrer qu'il s'agit bien de la suite recherchée. Commençons par montrer que la sous-suite $(\tilde{x}^{(i)}_j)_{j\in S_0}$ est régulière  par récurrence sur $|S_0|$.  Quand $S_0=\{s_0\}$, on a $X=\tilde{X}$ et $\tilde{x}^{(s_0)}_{s_0}=x_{s_0}$ qui est régulier par hypothèse. Supposons le résultat vrai pour $|S_0|-1\geq 1$. On se place en $U_i\subset \tilde{X}$. 
Comme  $x_{j_0}$ ($j_0\neq i$) n'est pas un diviseur de $0$ dans $A$,  donc $x_{j_0}^{[1]}$ ne l'est pas non plus dans $\tilde{A}$ et $\tilde{x}_{j_0}^{(i)}=\frac{x_{j_0}^{[1]}}{x_i^{[1]}}$ est un élément régulier dans $\Of(U_i)$. On veut montrer que $(\tilde{x}_{j}^{(i)})_{j\in S_0\backslash\{j_0\}}$ est régulier dans $\Of(U_s)/(\tilde{x}_{j_0})=\Of(U_i\cap \widetilde{V(x_{j_0})})$. Mais 
\[
\widetilde{V(x_{j_0})}=\bl_{V((\tilde{x}_{j}^{(i)})_{j\in S_0\backslash\{j_0\}})}(V(x_{j_0}))=:\proj(\widetilde{A/ x_{j_0}})
\]      
et $U_i\cap  \widetilde{V(x_{j_0})} =D^+(x_i^{[1]}\widetilde{A/ x_{j_0}}) $  est un ouvert standard. On conclut alors  par hypothèse de récurrence sur $\widetilde{V(x_{j_0})}$. 

Maintenant, nous devons montrer que la suite $(x_j^{(i)})_{j\notin S_0}$ est régulière dans $\Of(U_i)/(x_j^{(i)})_{j\in S_0}=A/I_{S_0}$ et cela découle de la régularité de  $(x_j)_{j\in \left\llbracket 1,s\right\rrbracket}$ dans $A$.

Pour 4., on a d'après le point précédent pour $S_1$ est une partie stricte  de $S_0$ et $S_2$ est disjointe de $S_0$, on a \[\tilde{Y}_{S_1}\cap \tilde{Y}_{S_2}=V\left({x_j^{(i)}}\right)_{j\in S_1\cup S_2}\]
et
\[\tilde{Y}_{S_0}\cap\tilde{Y}_{S_1}\cap \tilde{Y}_{S_2}=V\left({x_j^{(i)}}\right)_{j\in S_1\cup S_2\cup\{i\}}. \]
Par régularité de $(\tilde{x}^{(i)}_j)_{j\in \left\llbracket 1,s\right\rrbracket}$, on voit que la codimension de $\tilde{Y}_{S_1}\cap \tilde{Y}_{S_2}$ (resp. $\tilde{Y}_{S_0}\cap\tilde{Y}_{S_1}\cap \tilde{Y}_{S_2}$) est $|S_1\cup S_2|=|S_1|+| S_2|$ (resp. $|S_1\cup S_2\cup\{i\}|=|S_1|+| S_2|+1$).
 \end{proof}
\section{Construction de modèles de $\lt^1$}

Rappelons que nous avons construit une tour de revêtement $(\lt^n)_n$ de la boule unitée rigide. Dans toute la suite, on prend $n=1$ et nous construirons une suite de modèles $Z_0=\widehat{\lt}^1,Z_1,\cdots,Z_d$ de $\lt^1$. Nous avons vu que le modèle $Z_0$ était $\spf (A_1)$ avec $A_1$ local et régulier sur $\OC_{\breve{K}}$. Il est important de préciser que nous munissons $A_1$ de la topologie $p$-adique et non de la topologie engendrée par l'idéal maximal. En particulier, la fibre spéciale de $Z_0$ est de la forme  
\[
Z_{0,s}= \spec ( \bar{\F} \llbracket X_0,\ldots, X_d  \rrbracket  )/ (\prod_{a\in \F^{d+1}\backslash\{0\}} l_{a,F}(X)).
\] 
 où on note encore $l_{a,F}$ la réduction modulo $\varpi$. On a alors une décomposition  $Z_{0,s}=\bigcup_a Y_a$ où $Y_a=V(l_{a,F})$ \cite[Définition 3.7 + sous-section 3.2., page 11]{yosh}. On a le résultat suivant : 
\begin{prop}[\cite{yosh} Proposition 3.9]\label{propltfibrspe}
\begin{enumerate}

\item Si $S\subset \F_q^{d+1}\backslash\{0\}$ et $M= \langle S \rangle^{\perp}$, le fermé $\bigcap_{a\in S} Y_a$ ne depends que de $M$ et pas de $S$ et nous le noterons  $Y_M$.

\item Si $S$ est minimal (i.e.  est une famille libre),  alors la suite $(l_{a,F})_{a\in S}$ est    regulière.  

\item Les composantes irréductibles de $Z_{0,s}$ sont les fermés $Y_M$ où $M=a^\perp$ est un hyperplan. En particulier, chaques composantes irréductibles a multiplicité $q-1$ qui est le cardinal d'une $\F$-droite de $\F^{d+1}$ à laquelle on a retiré l'élément nul.

\item L'application $M\mapsto Y_M$ est une bijection croissante $G^{\circ}$-équivariante entre l'ensemble des sous-espaces de $\F^{d+1}$ et l'ensemble des intersections finies de composantes irréductibles de $Z_{0,s}$. De plus, chaque $Y_M$ est irreductible. 


\end{enumerate}

\end{prop}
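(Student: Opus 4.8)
The plan is to work throughout in the regular local (Noetherian, hence UFD) ring $R:=\bar{\F}\llbracket X_0,\ldots,X_d\rrbracket$ with maximal ideal $\bar{\mG}=(X_0,\ldots,X_d)$ and residue field $\bar{\F}$, writing $g_a:=l_{a,F}\bmod\varpi\in R$, so that by the description recalled just above the statement, $Z_{0,s}=\spec R/(\prod_{a\ne 0}g_a)$ and $Y_a=V(g_a)$. First I record the elementary consequences of Proposition~\ref{PropConglaF}: by its part~(1), $g_a\equiv l_a:=\sum_i a_iX_i\pmod{\bar{\mG}^2}$, so $g_a\in\bar{\mG}\setminus\bar{\mG}^2$, whence $(g_a)$ is a prime ideal, $R/(g_a)$ is regular of dimension $d$, $Y_a$ is integral of dimension $d$, and $Z_{0,s}$ is equidimensional of dimension $d$. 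By part~(2), if $a,a'$ lie on the same $\F$-line then $g_a$ and $g_{a'}$ are associates in $R$; moreover for $a,a'\in\F^{d+1}\setminus\{0\}$ the elements $g_a,g_{a'}$ are associates iff $l_a,l_{a'}$ are proportional in $\bar{\F}[X_0,\ldots,X_d]$ (reduce modulo $\bar{\mG}^2$), iff $a,a'$ span the same line (proportionality over $\bar{\F}$ of vectors of $\F^{d+1}$ forces proportionality over $\F$). Finally, $a\mapsto l_{a,F}(x)$ is $\OC_K$-linear: it is the universal level structure $\alpha$ composed with the identification $(\OC_K/\varpi)^{d+1}\cong((\varpi)^{-1}/\OC_K)^{d+1}$ attached to the chosen basis.

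\emph{Part (2).} If $S=\{a_1,\ldots,a_k\}$ is $\F$-linearly independent, its vectors remain $\bar{\F}$-linearly independent, so the images $l_{a_i}$ of the $g_{a_i}$ in $\bar{\mG}/\bar{\mG}^2\cong\bigoplus_i\bar{\F}X_i$ are $\bar{\F}$-linearly independent. In a regular local ring a family of elements of the maximal ideal with $\mathfrak{m}/\mathfrak{m}^2$-linearly independent images is a regular sequence whose quotient is again regular local; hence $(l_{a,F})_{a\in S}$ is regular and $R/(g_a)_{a\in S}$ is a regular local domain of dimension $d+1-k$.

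\emph{Part (1) and irreducibility.} The key claim is: \textbf{for $S$ linearly independent and $b\in\langle S\rangle\setminus\{0\}$ one has $g_b\in(g_a)_{a\in S}$.} Indeed, write $b=\sum_i c_ia_i$ with $c_i\in\F$; by $\OC_K$-linearity $l_{b,F}(x)=[\tilde c_1]_F\bigl(l_{a_1,F}(x)\bigr)+_F\cdots+_F[\tilde c_k]_F\bigl(l_{a_k,F}(x)\bigr)$ in $A_1$, and since $[\lambda]_F(Y)$ has no constant term and $Y_1+_FY_2\in(Y_1,Y_2)$, this gives $l_{b,F}(x)\in\bigl(l_{a_1,F}(x),\ldots,l_{a_k,F}(x)\bigr)$ as an ideal of $A_1$; lifting to $R$ and reducing modulo $\varpi$ yields $g_b\in(g_a)_{a\in S}+(\prod_{a'\ne 0}g_{a'})=(g_a)_{a\in S}$, the product being divisible by $g_{a_1}$. (Alternatively, after moving $S$ to a standard basis by the $G^\circ$-action one has $g_{e_i}=X_i$, and Proposition~\ref{PropConglaF}(3) places $g_b$ in $(X_0,\ldots,X_{k-1})$.) Granting this, for an arbitrary nonempty $S$ pick a basis $S_1\subseteq S$ of $W:=\langle S\rangle$: then $(g_a)_{a\in S}=(g_a)_{a\in S_1}$, and any two bases of $W$ give the same ideal for the same reason, so $(g_a)_{a\in S}$, hence $Y_S=\bigcap_{a\in S}Y_a$, depends only on $W$, equivalently only on $M:=W^{\perp}$. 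Setting $I_M:=(g_a)_{a\in S_1}$ ($S_1$ a basis of $M^{\perp}$) and $Y_M:=V(I_M)$, part~(2) shows $R/I_M$ is a regular local domain, so each $Y_M$ with $M\subsetneq\F^{d+1}$ is integral of dimension $d+1-\dim M^{\perp}=\dim M$.

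\emph{Parts (3), (4) and the main obstacle.} Grouping the defining product of $Z_{0,s}$ by lines gives $\prod_{a\ne 0}g_a=u\prod_{\ell}g_{a_\ell}^{\,q-1}$ with $u\in R^\times$, $a_\ell\in\ell\setminus\{0\}$ a chosen generator, and the $g_{a_\ell}$ pairwise non-associate primes; as $R$ is a UFD this is a prime factorisation, so the minimal primes of $R/(\prod_a g_a)$ are exactly the $(g_{a_\ell})$, i.e. the irreducible components of $Z_{0,s}$ are the $Y_{a_\ell}=V(g_{a_\ell})=Y_{a_\ell^{\perp}}$ — the $Y_M$ with $M$ a hyperplane — and localising at $(g_{a_\ell})$ (a DVR with uniformiser $g_{a_\ell}$, the other $g_{a_{\ell'}}$ becoming units) gives multiplicity $q-1=\#(\ell\setminus\{0\})$. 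For~(4): every proper $M$ is an intersection $a_1^{\perp}\cap\cdots\cap a_r^{\perp}$ of hyperplanes with $M^{\perp}=\langle a_1,\ldots,a_r\rangle$, whence $Y_M=\bigcap_iY_{a_i}$ is the corresponding intersection of components, and conversely; so $M\mapsto Y_M$ surjects onto the nonempty finite intersections of components. Since the $Y_M$ are integral, injectivity reduces to $I_M\subseteq I_{M'}\iff M'\subseteq M$: the implication $M'\subseteq M\Rightarrow I_M\subseteq I_{M'}$ is the key claim applied to each basis vector of $M^{\perp}\subseteq M'^{\perp}$, and conversely any $a\in M^{\perp}$ with $g_a\in I_{M'}$ has, after reducing $I_{M'}$ modulo $\bar{\mG}^2$, its linear form $l_a$ in the $\bar{\F}$-span of $\{l_{a'}:a'\in S_1'\}$, forcing $a\in\langle S_1'\rangle=M'^{\perp}$ (as $S_1'\subseteq\F^{d+1}$). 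Hence $Y_M=Y_{M'}\Rightarrow M=M'$, the bijection is order-preserving ($M\subseteq M'\iff Y_M\subseteq Y_{M'}$), and it is $G^\circ$-equivariant since the $G^\circ$-action coming from the modular interpretation permutes $(Y_a)_a$, hence the components and their intersections, compatibly with the linear action on indices. The hard part is the key claim $g_b\in(g_a)_{a\in S}$ for $b\in\langle S\rangle$ together with its sharp converse $b\notin\langle S\rangle\Rightarrow g_b\notin(g_a)_{a\in S}$; everything else is bookkeeping with regular sequences in the regular local ring $R$.
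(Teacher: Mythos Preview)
Your proof is correct and follows exactly the route the paper indicates: the paper's own ``proof'' merely states that the result is an application of Proposition~\ref{PropConglaF} carried out in \cite[Proposition 3.9, Lemma 3.11]{yosh}, and you have written out that application in full, using the congruence $g_a\equiv l_a\pmod{\bar{\mG}^2}$ for regularity, the unit relation for associates and multiplicities, and the $\OC_K$-linearity of the universal level structure (equivalently part~(3) of Proposition~\ref{PropConglaF} after a $G^\circ$-translation) for the key inclusion $g_b\in(g_a)_{a\in S}$ when $b\in\langle S\rangle$. One cosmetic remark: when you write ``lifting to $R$ and reducing modulo $\varpi$'', you mean lifting from $A_1$ to $\OC_{\breve{K}}\llbracket X\rrbracket$ and then reducing modulo $\varpi$; the extra term you pick up lies in $(\bar P)\subset(g_{a_1})$ exactly as you say.
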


\begin{proof}

Il s'agit d'une application de \ref{PropConglaF} qui a été faite dans \cite[Proposition 3.9.,Lemma 3.11.]{yosh}.

\end{proof}

On définit 
\[
Y^{[h]}=\uni{Y_N}{N\subset \F^{d+1}\\ {\rm dim}(N)=h}{}.
\]

 Nous allons définir une suite d'espaces $Z_0,\ldots, Z_d$  sur $\OC_{\breve{K}}$, et des fermés $Y^{[0]}_i\subset \cdots \subset Y_i^{[d]}\subset Z_{i,s}$ tels que $Y_0^{[h]}=Y^{[h]}$  s'inscrivant dans le diagramme commutatif 
\[\xymatrix{ 
 Z_i\ar[r]^{\tilde{p}_i} \ar@/_0.75cm/[rrrr]_{p_i} & Z_{i-1}\ar[r]^{\tilde{p}_{i-1}} & \cdots \ar[r]^{\tilde{p}_2} & Z_1 \ar[r]^{\tilde{p}_1} & Z_0}\]
avec $\tilde{p}_i$ un éclatement .  Supposons $Z_0, \ldots, Z_{i}$, et $Y_i^{[0]}, \ldots, Y_i^{[d]}$ préalablement construit,   on définit $Z_{i+1}$ et $Y_{i+1}^{[h]}$ via le relation de récurrence 
\[
Z_{i+1}=\bl_{Y^{[i]}_h}(Z_{i})  \et Y_{i+1}^{[h]}= \widetilde{Y^{[h]}_i}. 
\]
De même, pour tout fermé $Y\subset Z_{0,s}$, on définit pour tout $i$ un  fermé $Y_i\subset Z_{i,s}$ via $ Y_0=Y $ et $ Y_{i+1}= \widetilde{Y_i}$. En particulier, nous pourrons nous intéresser à la famille des fermés $Y_{M,i},Y_{a,i}\subset Z_{i,s}$ pour $M\nsubseteq \F^{d+1}$ et $a\in \F^{d+1}\backslash\{0\}$.   

\begin{rem}

Notons que les éclatements considérés ici sont bien admissibles au sens de \ref{rembladm} car les fermés que nous avons considérés sont contenus dans la fibre spéciale (ce qui explique le choix de la topologie dans la définition de $Z_0$). Ce fait justifie aussi que chacun des espaces $Z_i$ obtenus sont encore des modèles de $\lt^1$.

Nous pouvons aussi donner la construction de ces modèles dans un cadre plus "algébrique". En effet, on peut construire dans $\spec (A_1)$ un analogue de la stratification $(Y^{[h]})_h$ et considérer les éclatements successifs suivant les fermés de cette stratification. On obtient alors une chaîne d'éclatement qui fournit encore, lorsqu'on complète $p$-adiquement, les modèles $Z_i$ décrits auparavant.

\end{rem}

L'interprétation modulaire de $Z_0$ fournit une action naturelle des trois groupes $\OC_D^*$ (qui s'identifient aux isogénies de $\Phi^d$), $G^{\circ}$ (en permutant les structures de niveau) et de $W_K$ (sur $Z_0\otimes \OC_C$ pour ce dernier). Comme les morphismes de vari\'et\'es envoient les irr\'eductibles sur les irr\'eductibles, tout groupe agit en permutant les $Y_a$ et donc les $Y_M$ de même dimension. Ainsi, $Y^{[h]}_0$ est stable sous les actions de $G^\circ$, $\OC_D^*$ et $W_K$ et $Z_1$ hérite d'une action de ces groupes qui laisse stable $Y^{[h]}_1$ par propriété universelle de l'éclatement. Par récurrence immédiate, on a encore une action de $G^\circ$, $\OC_D^*$ et $W_K$ sur $Z_i$ qui laisse stable $Y^{[h]}_i$ pour tout $i$. 

Dans le cas particulier où $i=1$, $Y^{[0]}_0= Y_{\{0\}}$ est l'unique point fermé de $Z_0$.  Le diviseur exceptionnel $Y_{\{0\},1}$ s'identifie à $\P^d_{\overline{\F}}$ et hérite des actions de $G^\circ, \OC_D^*$ et $W_K$.

\section{Les composantes irréductibles de la fibre spéciale des modèles $Z_i$}

Nous souhaitons décrire les composantes irréductibles de la fibre spéciale de chacun des modèles intermédiaires $Z_i$. Si $Y$ est une composante irréductible de $Z_{i,s}$, on écrit $Y^{lisse}=Y\backslash \bigcup Y'$ où $Y'$ parcourt l'ensemble    des composantes irréductibles de $Z_{i,s}$ différentes de $Y$. Le but de cette section est de démontrer le théorème suivant : 
\begin{theo}
\label{TheoZH}
Soit $0\leq i \leq d$ un entier, On a 

\begin{enumerate}
\item Les composantes irréductibles de la fibre spéciale de $Z_i$ sont les fermés  de dimension $d$ suivants   $(Y_{M,i})_{M: \dim M\in \left\llbracket 0,i-1\right\rrbracket\cup\{d\}}$. 

\item Les intersections non-vides  de composantes irréductibles de $Z_i$ sont en bijection avec les drapeaux $M_1\subset \cdots \subset M_k$  tels que $\dim M_{k-1}< i$ via l'application \[M_1\subset \cdots \subset M_k \mapsto \bigcap_{1\leq j \leq k} Y_{M_j,i}.\]

\item  Si $Y_{M,i}$ est une composante irréductible de $Z_{i,s}$ avec $\dim M\neq d$, alors les morphismes naturels $\tilde{p}_h$ avec $j=i+1,\ldots,  d$ induisent  des isomorphismes $  Y_{M,d}^{lisse}  \cong\cdots \cong Y_{M,i+1}^{lisse} \cong Y_{M,i}^{lisse}$. 

\item Le changement de base du tube $]Y_{\{0\},d}^{lisse}[_{Z_d} \otimes \breve{K}(\varpi_N)\subset {\rm LT}_1 \otimes \breve{K}(\varpi_N)$ admet un modèle lisse dont la fibre spéciale est isomorphe à la variété de Deligne-Lusztig $\dl_{\bar{\F}}$ (ici, $N=q^{d+1}-1$ et $\varpi_N$ est le choix d'une racine $N$-ième de $\varpi$).
\end{enumerate}

\end{theo}

En fait, nous allons prouver le résultat plus technique et plus précis.

\begin{lem}\label{lemirrzh}

Soit $0\leq i \leq d$ un entier fixé, la propriété $\Pf_i$ suivante est vérifiée :

\begin{enumerate}

\item Pour tout sous-espace vectoriel $M\subsetneq \F^{d+1}$, $Y_{M,i}$ est irréductible et  $Y_{M,i}\subset Z_i$ est une immersion régulière de codimension $1$ si $\dim M<i$ et $\codim M$ sinon.

\item Si $M=\langle S \rangle^{\perp}$ avec $ S=\{a_1,\ldots, a_s\}\subset \F^{d+1}$, on a $Y_{a_1,i}\cap \cdots \cap Y_{a_s,i}= \begin{cases}  Y_{M,i}, &  \si \dim M\geq i \\ 
\emptyset  & \sinon \end{cases} $  

\item Si $M\subsetneq N\subset \F^{d+1}$ avec $\dim M \geq i$,  alors $Y_{M,i}\to Y_{N,i}$ (bien définie d'après le point précédent),  est une  immersion régulière de codimension  $\dim N-\dim M$.

\item Si $M_1\subset \cdots \subset M_k$ avec $\dim M_{k-1}< i$, $\bigcap_{1\leq j \leq k} Y_{M_j,i}\subset Y_{M_k,i}$ est une  immersion régulière de codimension $s-1$. En particulier, $Y_{M_k,i}$ et $\bigcap_{1\leq j \leq k-1} Y_{M_j,i}$ sont transverses.

\item Si $i> i'=\dim M$,  on a un isomorphisme $Y_{M,i}=  \tilde{p}_{i,i'}^{-1}(Y_{M,i'})$. 

\end{enumerate}   

\end{lem}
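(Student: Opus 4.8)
The natural strategy is induction on $i$, proving that $\Pf_i$ holds for all $0\le i\le d$. The base case $i=0$ is essentially Proposition \ref{propltfibrspe}: for $i=0$ condition (1) says every $Y_M$ is irreducible of codimension $\codim M$ (and there is nothing with $\dim M<0$), condition (2) is exactly the statement that $\bigcap_{a\in S}Y_a$ depends only on $M=\langle S\rangle^\perp$ together with the regularity of the sequence $(l_{a,F})_{a\in S}$ for $S$ minimal, condition (3) follows from the regularity of those sequences, and conditions (4), (5) are vacuous. So the heart of the argument is the inductive step $\Pf_{i}\Rightarrow\Pf_{i+1}$, which is where the blow-up machinery of Sections 4 and 5 (in particular Theorem \ref{theoblreg} and its local incarnation Lemma \ref{lemReg2}) comes in.

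For the inductive step, recall $Z_{i+1}=\bl_{Y^{[i]}_i}(Z_i)$, where $Y^{[i]}_i=\bigcup_{\dim N=i}Y_{N,i}$, and $Y_{M,i+1}=\widetilde{Y_{M,i}}$. The key local observation, to be used repeatedly, is that by $\Pf_i$ the configuration of the closed subschemes $Y_{M,i}$ around a point of $Z_i$ is \emph{locally} cut out by a regular sequence indexed by a poset of subspaces, so that Lemma \ref{lemReg2} (and the transversality statements of Theorem \ref{theoblreg}) apply verbatim. Concretely: near a point lying on $Y_{N,i}$ with $\dim N=i$, one picks the regular sequence furnished by $\Pf_i$(1)--(4), identifies $Y^{[i]}_i$ locally with the relevant $Y_{N,i}$ (only finitely many strata of dimension $i$ meet a small enough neighbourhood, and on overlaps they are transverse by $\Pf_i$(4), so one reduces to blowing up one smooth center at a time), and then reads off the strict/total transforms. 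Part (5) of $\Pf_{i+1}$ is then immediate from Theorem \ref{theoBlowUp}(3): away from the center $Y^{[i]}_i$ the map $\tilde p_{i+1}$ is an isomorphism, so for $\dim M=i$ one gets $Y_{M,i+1}=\widetilde{Y_{M,i}}$ behaving as the strict transform of a smooth center through which things are transverse, hence $Y_{M,i+1}\to Y_{i+1,s}$ regular of codimension $1$ (it becomes a new irreducible component, which is why the index set in (1) grows by adding $\dim M=i$); for $\dim M<i$ the subscheme $Y_{M,i}$ contains components of the center, so $Y_{M,i+1}=p^{-1}(Y_{M,i})$ is a projective bundle over it (Theorem \ref{theoBlowUp}(4)), still irreducible of codimension $1$; and for $\dim M>i$ it is transverse to the center, so $Y_{M,i+1}=p^{-1}(Y_{M,i})$ is an isomorphic copy, of codimension $\codim M$. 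Irreducibility in all cases follows from the last assertion of Theorem \ref{theoBlowUp}(4) (strict transform of an irreducible is irreducible) together with the projective-bundle description. This gives $\Pf_{i+1}$(1). Claims (2), (3), (4) of $\Pf_{i+1}$ then follow by feeding the regularity/transversality data just obtained into Lemma \ref{lemReg2}(1),(2),(4): the intersection formula $Y_{a_1,i+1}\cap\cdots\cap Y_{a_s,i+1}$ is the strict/total transform of $Y_{a_1,i}\cap\cdots\cap Y_{a_s,i}=Y_{M,i}$, which equals $Y_{M,i+1}$ when $\dim M\ge i+1$ and becomes empty precisely when $\dim M=i$ (that is the "$Z=Z_1\cap Z_2$" degenerate case of Lemma \ref{lemReg2}(1), reflecting that the center has been blown up and its strict transforms separated), and the codimension bookkeeping for flags is exactly Lemma \ref{lemReg2}(4) applied with $S_1=\langle M_1,\dots,M_{k-1}\rangle^\perp$-type data.

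The main obstacle, I expect, is the bookkeeping at the inductive step when \emph{several} strata of dimension $i$ pass through a common point together with strata of other dimensions: one must check that, locally, one can order the blow-ups of the finitely many relevant components of $Y^{[i]}_i$ so that at each stage the next center is still smooth and the already-computed transforms of the other $Y_{M,\bullet}$ remain in the transverse/regular position required to reapply Lemma \ref{lemReg2}. This is precisely the content of the transversality clauses $\Pf_i$(4) and of Theorem \ref{theoblreg}(3)--(4) (strict transform of a transverse divisor is its preimage, and transversality is preserved), so the induction is designed to carry exactly the hypotheses needed; but verifying that the local regular sequence can be chosen uniformly and that "blow up the union of the $\dim M=i$ strata" genuinely decomposes into a sequence of smooth-center blow-ups is the step requiring care. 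Once $\Pf_i$ is established for all $i$, Theorem \ref{TheoZH} follows: (1) and (2) are $\Pf_d$(1) and the combination of $\Pf_i$(2) over all $i$ recast in terms of flags, (3) follows from $\Pf_i$(5) applied with $i'=\dim M$ (the preimage identification restricts to an isomorphism on the smooth loci since the further centers $Y^{[j]}_j$ for $j>i'$ do not meet $Y_{M,j}^{lisse}$), and (4) is then the identification of $]Y_{\{0\},d}^{lisse}[$ with (a base change of) the relevant Deligne--Lusztig variety, obtained by combining the explicit equation of Theorem \ref{theolteq} with this chain of isomorphisms.
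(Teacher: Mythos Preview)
Your approach is the same as the paper's: induction on $i$, base case via Proposition~\ref{propltfibrspe}, inductive step via the blow-up results Theorem~\ref{theoblreg} and Lemma~\ref{lemReg2}. However, you have missed the observation that dissolves your ``main obstacle'': by $\Pf_i$(2), if $N_1\neq N_2$ both have dimension $i$ then $\dim(N_1\cap N_2)<i$, hence $Y_{N_1,i}\cap Y_{N_2,i}=\emptyset$. Thus $Y^{[i]}_i=\bigsqcup_{\dim N=i}Y_{N,i}\hookrightarrow Z_i$ is already a \emph{disjoint} union of regularly embedded closed subschemes. The paper simply covers $Z_i$ by the opens $U_N:=Z_i\setminus\bigcup_{N'\neq N}Y_{N',i}$, on each of which the center of $\tilde p_{i+1}$ is the single smooth $Y_{N,i}$, so that Theorem~\ref{theoBlowUp}(2),(3) and Theorem~\ref{theoblreg} apply directly. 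No ordering of successive blow-ups, and no ``several strata of dimension $i$ through a common point'', ever occurs.

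Your description of the case $\dim M<i$ is also inaccurate. The divisor $Y_{M,i}$ does not contain any component of the center (the center has dimension $i<d$); rather, by $\Pf_i$(4) it is \emph{transverse} to each $Y_{N,i}$ with $M\subset N$ (and disjoint from those with $M\not\subset N$), and it is Theorem~\ref{theoblreg}(3) that gives $\widetilde{Y_{M,i}}=\tilde p_{i+1}^{-1}(Y_{M,i})$. This preimage is a blow-up of $Y_{M,i}$ along $Y_{M,i}\cap Y^{[i]}_i$, not a projective bundle; the projective-bundle picture is only valid at the single step from $i'=\dim M$ to $i'+1$, where $Y_{M,i'}$ is genuinely contained in the center.
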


\begin{proof}
Raisonnons par récurrence sur $i$.
Quand $i=0$,  $\Pf_0$ correspond à la Proposition \ref{propltfibrspe} (4. et 5. sont vides dans ce cas). Supposons $\Pf_i$ vrai et montrons $\Pf_{i+1}$.   On sait ($\Pf_i$ 1. et 2.) que $Y_{N,i}\to Z_i$ est régulière pour tout $N$ et $Y_{N_1,i}\cap Y_{N_2,i}=\emptyset$ si $\dim N_1=\dim N_2=i$ et $N_1\neq N_2$.  Donc $Y^{[i]}_i= \bigsqcup_N Y_{N,i}\rightarrow Z_i$ est régulière. Considérons le  recouvrement  $Z_i =\bigcup_{N:\dim N=i} U_N$ avec $U_N:=Z_i\backslash (\bigcup_{N'} Y_{N',i})$ où $N'$ parcours les espaces vectoriels de dimension $i$ différents de $N$. On a par construction $U_N\cap Y^{[i]}_i=Y_{N,i}$ et les résultats \ref{theoBlowUp} 2. et 3. entraînent
\begin{align*}
\tilde{p}_{i+1}^{-1}(U_N)&=\bl_{Y_{N,i}} (U_N)\\
Y_{M,i+1} \cap \tilde{p}_{i+1}^{-1}(U_N)&=\bl_{Y_{M,i}\cap Y_{N,i}} (Y_{M,i}\cap U_N)=\widetilde{Y_{M,i}\cap U_N}
\end{align*} 
et on peut appliquer  \ref{theoBlowUp} 4.,  \ref{theoblreg}  pour montrer que $Y_{M,i+1} \cap \tilde{p}_{i+1}^{-1}(U_N)$ vérifie les propriétés voulues. Mais comme ces dernières sont des égalités entre des intersections et des propriétés de régularité qui sont locales, on en déduit la propriété $\Pf_{i+1}$ ce qui termine la preuve.  

\end{proof}

\begin{proof}[Démonstration du Théorème \ref{TheoZH}]
Pour le premier point, tous les fermés de la forme $Y_{M,i}$ avec $\dim M\in \left\llbracket 0,i-1\right\rrbracket\cup\{d\}$ sont irréductibles d'après le résultat précédent. On a aussi montré qu'ils étaient tous de dimension $d$ et que les  intersections de deux éléments étaient ou vide ou de dimension $d-1$. On en déduit qu'il n'existe aucune relation d'inclusion entre deux de ces fermés. Il suffit donc de prouver que les fermés étudiés recouvrent bien  la fibre spéciale de  $Z_i$. Pour cela, raisonnons par récurrence sur $i$. Au rang $0$, cela provient  de \ref{propltfibrspe}. Supposons le résultat vrai au rang $i$. Dans ce cas, l'union $\bigcup_{\dim M\in \left\llbracket 0,i-1\right\rrbracket\cup\{d\}} Y_{M,i}$ contient $Z_i\backslash Y^{[i]}_i$ et on en déduit d'après \ref{theoBlowUp} 2.
\[
Z_{i+1}\backslash Y^{[i]}_{i+1}\subset \bigcup_{\dim M\in \left\llbracket 0,i-1\right\rrbracket\cup\{d\}} Y_{M,i+1}.
\]
De plus, le diviseur exceptionnel $Y^{[i]}_{i+1}$ est l'union des fermés de la forme $Y_{M,i+1}$ avec $\dim M =i$ ce qui prouve que la famille exhibée correspond bien à la décomposition parties irréductibles de la fibre spéciale de $Z_{i+1}$.

Pour le deuxième point, les intersections de composantes irréductibles sont de la forme $\bigcap_{1\leq j \leq k-1} Y_{M_j,i}\cap \bigcap_{a\in S}Y_{a,i}$ avec $S\subset \F^{d+1}\backslash\{0\}$ et $\dim M_j<i$. Posons $M_k=\langle S \rangle^{\perp}$, l'intersection précédente est vide si $\dim M_k <i$. Supposons que ce ne soit pas le cas,  il s'agit de voir que $\bigcap_{1\leq j \leq k} Y_{M_j,i}$ est  non vide si et seulement si $(M_j)_j$ est un drapeau. Si c'en est un, d'après \ref{theoblreg},  l'intersection est de codimension 
\[
\begin{cases}
s &\si \dim M_{k}< i\\
s-1 + \codim M_k &\sinon 
\end{cases}
\]
qui est  inférieure ou égale à $d$  ce qui montre que ces intersections sont non-vides. Pour l'autre sens, supposons  $M_{j_1}\nsubseteq M_{j_2}$ et $M_{j_1}\nsubseteq M_{j_2}$ pour $j_1\neq j_2$, nous devons montrer  $Y_{M_{j_1},i}\cap Y_{M_{j_2},i}=\emptyset$. On peut trouver sous ces hypothèses  $i'$ tel que \[\dim M_{j_1}\cap M_{j_2}< i' \le \min (\dim M_{j_1},\dim M_{j_2}) <i.\] On a alors $Y_{M_{j_1},i'}\cap Y_{M_{j_2},i'}=\emptyset$ d'après $\Pf_{i'}$ 2. De plus, si $i_2>\dim M_{j_1}\cap M_{j_2}$ et $Y_{M_{j_1},i_2}\cap Y_{M_{j_2},i_2}=\emptyset$, alors $Y_{M_{j_1},i_2+1}\subset\tilde{p}^{-1}_{i_2+1}(Y_{M_{j_1},i_2})$ (idem pour $Y_{M_{j_2},i_2}$) d'où \[Y_{M_{j_1},i_2+1}\cap Y_{M_{j_2},i_2+1}\subset\tilde{p}^{-1}_{i_2+1}(Y_{M_{j_1},i_2}\cap Y_{M_{j_2},i_2})=\emptyset\] ce qui montre par récurrence $Y_{M_{j_1},i}\cap Y_{M_{j_2},i}=\emptyset$.

Passons à 3.  Si   $Y$ est irréductible sur $Z_{i,s}$ et n'est pas de la forme $Y_{a,i}$ pour $a\in \P^{d}(\F)$, alors $Y^{lisse}$  ne rencontre aucun $Y_{a,i}$ et donc aucun $Y_{M,i}$ avec $\dim M=i$ car ce sont des intersections  de tels fermés de la forme $Y_{a,i}$ d'après  1. En particulier, $Y^{lisse}\cap Y^{[i]}_{i}=\emptyset$.  De même,  $(\tilde{Y})^{lisse}$ ne rencontre pas $ Y^{[i]}_{i+1}$ qui est une union de composantes irréductibles toujours d'après 1. Il reste à prouver que $(\tilde{Y})^{lisse}=\tilde{p}_{i+1}^{-1}(Y^{lisse})$.  Cela découle de l'isomorphisme (cf  \ref{theoBlowUp} 3.) pour $Y'$ une composante irréductible différente de $Y$ :
\[
\tilde{p}_{i+1}: \widetilde{Y'}\backslash  Y^{[i]}_{i+1} \iso  Y'\backslash  Y^{[i]}_{i}.
\]

Pour le dernier point, cela a été montré dans  \cite[Proposition 5.2]{yosh} quand $i=0$. Pour les autres valeurs de $i$, il suffit d'appliquer le point 3.
\end{proof}




\section{Semi-stabilité du modèle $Z_d$}

Notre but dans cette section est de rappeler le théorème principal de \cite[Théorème 4.2]{yosh}.  Pour le confort du lecteur, nous donnerons les grandes lignes de la preuve.  L'énoncé est le suivant 

\begin{theo}[Yoshida]
Pour tout point fermé $x\in Z_d$,   l'anneau local complété au point $x$ est isomorphe à 
\[
\OC_{\breve{K}} \llbracket T_0,\ldots, T_d  \rrbracket /(T_0^{e_0}\cdots T_r^{e_r}- \varpi )
\]
avec $r\leq d$ et $e_i$ premier à $p$. 
\end{theo}

\begin{rem}

Expliquons succinctement la stratégie de la preuve. L'idée est de calculer par récurrence l'anneau local complété en les points fermés des différents  éclatements $Z_{i}$.  Plus précisément, on montre une décomposition pour cet anneau en un point $z$ sous la forme  $\OC_{\breve{K}}\llbracket X_0,\ldots, X_d \rrbracket/(\prod_j f_j^{m_j}-\varpi)$ où  les fermés $(V(f_j))_j$ décrivent  l'ensemble des composantes irréductibles rencontrant le point $z$ considéré et l'entier $m_i$ est la multiplicité de la composante associée. D'après le théorème   \ref{TheoZH} 2.,  ces composantes sont de la forme $Y_{M,i}$ avec $\dim {M}< i$ ou $Y_{a,i}$ avec $a\in \F^{d+1}\backslash\{0\}$.       \'Etant donnée une  famille des composantes $(Y_{M_j,i})_{j} \cup (Y_{a_j,i})_j$, les  fonctions associées $(f_{M_j})_j\cup f_{a_j})_{j}$  forment  une suite régulière  si et seulement si les $(a_j)_j$ sont  libres (et donc une famille $\bar{\F}$-libre dans l'espace cotangent).   On peut alors  estimer le nombre des composantes de la forme $Y_{a,i}$ et montrer qu'il  y en a au plus une quand $i=d$.  Dans ce cas, les $(f_j)$ forment une suite régulière vérifiant la propriété de liberté précédente et  la semi-stabilité  en découle.  

Pour pouvoir décrire les fonctions $f_j$ et les $m_j$, on raisonne par récurrence sur $i$.   Quand $i=0$, il s'agit du théorème  \ref{theolteq}.  Si $z$ est  un point fermé de $Z_{i+1}$, son image par l'éclatement  $\tilde{z}\in Z_{i}$ est un point fermé  et $\widehat{\Of}_z$ se voit comme l'anneau local complété en un point du diviseur exceptionnel  d'un éclatement de $\widehat{\Of}_{\tilde{z}}$.  Grâce au lemme \ref{lemReg2}, on peut décrire explicitement le lien entre les composantes irréductibles de $\widehat{\Of}_{\tilde{z}}$ et celles de $\widehat{\Of}_{z}$. 

\end{rem}

\begin{proof}

Fixons $z\in Z_i$ pour $i\leq d$. Calculons  l'anneau local complété dans $z$.    Comme $p_i: Z_{i}\rightarrow Z_0$ est propre, $p_i(z)$ est un point fermé et est donc  $Y_{\{0\},0}$ par localité de $A_0=\Of(Z_0)$. Ainsi, on a $z\in p_i^{-1}(Y_{\{0\},0})=Y_{\{0\},i}$ d'après le lemme  \ref{lemirrzh} 5.      D'après le théorème  \ref{TheoZH} 2.,  il existe un drapeau  $M_0 \subsetneq M_1\subsetneq \cdots \subsetneq M_{j_0} \subsetneq M_{j_0+1} $    avec $\dim M_{j_0} <i $  tel que les composantes irréductibles rencontrant $z$ sont les $Y_{M_0,i}, \cdots,Y_{M_{j_0,i}}$     ainsi que les fermés $Y_{a,i}$ avec $a^\perp\supset M_{j_0+1} $.   De plus,  $M_0=\{0\}$ d'après la discussion précédente.
Quitte à translater par un élément de $\gln_{d+1}(\F)$, on peut supposer que $M_j=\langle e_0,\ldots, e_{d_{j}} \rangle$ avec $(e_i)_i$ la base canonique de $\F^{d+1}$ (en particulier, $d_i=\dim M_i-1$).     Nous voulons prouver par récurrence sur $i$ que l'anneau local complété $\widehat{\Of}_z$ au point $z$ est isomorphe à 
\[
\OC_{\breve{K}} \llbracket X_0,\ldots, X_d \rrbracket /(uX_{d_0}^{m_0} \cdots X_{d_{j_0}}^{m_{j_0}}(\prod_{a\in M_{j_0+1}^{\perp}\backslash\{0 \}  }  f_a) - \varpi)
\]
avec

\begin{enumerate}

\item  $u$  est une unité et     $m_j=|M_j^{\perp}  |-1=q^{d-d_j}-1$. 

\item $V(f_a)= \spec \widehat{\Of}_z  \times_{Z_i} Y_{a,i}$ et $V(X_{d_j})=  \spec \widehat{\Of}_z  \times_{Z_i} Y_{M_j,i}$.  

\item   Si $a=\sum_{j\geq k }  a_j e_j$ avec $k> d_{j_0+1}$,  on peut trouver des relevés $\tilde{a}_j\in \OC_{\breve{K}}$ tel que  
\[
f_a= \sum_{i} \tilde{a}_j X_j \mod  (X_{k}, \ldots , X_d)^2.\]

\end{enumerate} 

 Cette description de l'anneau local complété quand  $i=d$ établi le théorème.  En effet, on a     $M_{j_0+1}^{\perp}=0$ et le produit $(\prod_{a\in M_{j_0+1}^{\perp}\backslash\{0 \}  }  f_a)$ est vide.   De plus,  d'après le lemme de Hensel,   $\widehat{\Of}_{z}^*$ est $n$-divisible pour $n$ premier à $p$.  En posant $\tilde{X}_{0}= u^{1/m_0} X_{0}$ et $\tilde{X}_j=X_j$ pour $j\neq 0$, on voit 	que $\widehat{\Of}_z$ est de la forme voulue.

Quand $i=0$, voyons que la description précédente de l'anneau local complété découle de \ref{propltfibrspe}. Le fermé $Y_{\{0\}}$ est l'unique point fermé de $Z_{0}$ et l'anneau local complété est 
\[
\Of(Z_0)= \OC_{\breve{K}}\llbracket X_0,\ldots, X_d \rrbracket/(u\prod_{a\in \F^{d+1}\backslash \{0\}} l_{a,F} -\varpi)
\]
avec $u$ une unité.   Décrivons les quantités $j_0, (M_j)_{0\leq j\leq j_0+1} $ et $(f_a)_{a\in M_{j_0+1}^{\perp}\backslash \{0\}}$ pour $\Of(Z_0)$.  On a $j_0={-1}$, $M_{j_0+1}=M_{0}=\{0\}$, en particulier  $M_0^\perp= \F^{d+1}$ .  Posons  $f_{a}=l_{a,F}$ pour $a\in \F^{d+1}\backslash \{0\}$ et on obtient la formule voulue grâce à \ref{PropConglaF} et l'action transitive de $\gln_{d+1}(\F)$ sur la famille $(l_{a,F})$.   

Supposons le résultat pour $i\geq 0$, montrons-le pour $i+1$ et prenons $z\in Z_{i+1}$ un point fermé.  Par propreté des éclatements,  $\tilde{z}={p}_{i+1}(z)$ est un point fermé de $Z_{i}$.  Si $\dim M_{j_0} < i$, alors $\tilde{z}$ n'est pas dans le centre de l'éclatement et $p_{i+1}$ induit un isomorphisme entre $\hat{\Of}_{\tilde{z}}$ et $\hat{\Of}_z$.  Le résultat dans ce cas s'en déduit par hypothèse de récurrence.

Sinon, toujours par hypothèse de récurrence,   on écrit l'anneau local complété sous la  forme  
\[
\widehat{\Of}_{\tilde{z}}\cong \OC_{\breve{K}} \llbracket \tilde{X}_0,\ldots, \tilde{X}_d \rrbracket /(u\tilde{X}_{d_0}^{m_{0}} \cdots \tilde{X}_{d_{j_0-1}}^{m_{j_0-1}}(\prod_{a\in M_{j_0}^{\perp}\backslash\{0 \}  }  f_a) - \varpi)
\]
où $j_0-1$, $(M_j)_{j\le j_0}$, et $(f_a)_a$ vérifient les propriétés escomptées au rang.  Pour relier l'anneau $\hat{\Of}_{\tilde{z}}$  à l'anneau $\hat{\Of}_z$, on observe (cf. théorème \ref{theoBlowUp} 3.) l'identité   


\[
S: = \bl_{Y_{M_{j_0+1},i}\times_{{Z_i}} {\widehat{Z}_{i, \tilde{z}}}    }(\widehat{Z}_{i, \tilde{z}} )=  Z_{i+1}\times_{Z_{i}}\widehat{Z}_{i, \tilde{z}}. 
\]
avec $\widehat{Z}_{i, z}=\spec \widehat{\Of}_{\tilde{z}}$ car $\widehat{Z}_{i,\tilde{z}}\rightarrow Z_{i}$ est plat.  

D'après 2. et 3. de  l'hypothèse  de récurrence et \ref{TheoZH} 3.,  $(\tilde{X}_j)_{j=i,\dots, d}$ est une suite régulière  et 
\[
S= \bl_{V((f_{e_j})_j)}(\widehat{Z}_{i,\tilde{z}})= \proj( \widehat{\Of}_{\tilde{z}}[T_i, \ldots, T_{d}]/(T_{j_1} \tilde{X}_{j_2}- T_{j_2} \tilde{X}_{j_1})).  
\] 
L'anneau $\spf \hat{\Of}_z$ correspond au complété en un point fermé  du diviseur exceptionnel $\proj( \overline{\F} [T_i,\ldots, T_d])$, et ces derniers  sont en bijection avec $\P^{d-i}(\overline{\F})$.  Mais par hypothèse, $z\notin Y_{e_{i},i+1}$  et le point fermé en question est dans $D^+(T_i)= \spec B$.    
Écrivons-le sous la forme \[z=[1,z_{i+1}, \cdots, z_d]=[1,z_{i+1}, \cdots, z_{d_{j_0+1}-1}, 0, \cdots, 0]\in \P^{d-i}(\bar{\F}).\]  On a alors  
\[
B:= \OC_{\breve{K}}\llbracket \tilde{X}_0,\ldots, \tilde{X}_d \rrbracket [\tilde{T}_{i+1}, \ldots, \tilde{T}_{d}]/(\tilde{T}_{j}\tilde{X}_i-\tilde{X}_j, P-\varpi) 
\] 
avec $\tilde{T}_j=\frac{T_j}{T_i}$ et l'idéal associé à $z$ est  \[\mG_{z}=\mG_{\tilde{z}}B+(\tilde{T}_j-\tilde{z}_j)_j,\] où  $\tilde{z}_j\in \OC_{\breve{K}}$ est un relèvement  de $z_j\in \overline{\F}$,  et  $P=u\tilde{X}_{d_0}^{m_0} \cdots \tilde{X}_{d_{j_0-1}}^{m_{j_0-1}}(\prod_{a\in M_{j_0}^{\perp}\backslash\{0 \}  }  f_a) $.  

Par noetherianité,  on obtient  d'après \cite[Prop 10.13]{atimcdo}
\[
\widehat{\Of}_{z}= \OC_{\breve{K}}\llbracket   X_0, \cdots, X_d  \rrbracket/(P(X_0, \cdots,X_i, (X_{i+1}+\tilde{z}_{i+1}) X_i,\cdots, (X_d+\tilde{z}_d) X_i)-\varpi)
\] en posant\footnote{Notons que l'on a pris $\tilde{z}_j=0$ si $j \geq d_{j_0+1}$} \[
X_j= \begin{cases}
\tilde{X}_j & \si j\leq i \\
\tilde{T}_j-\tilde{z}_j & \sinon. 
\end{cases}
\]  
Comme $f_{a}(\tilde{X}_{0},\dots, \tilde{X}_d)\in (\tilde{X}_{i},\dots, \tilde{X}_d)$  d'après 3.,  alors 
\[
f_a( X_0, \cdots,X_i, (X_{i+1}+\tilde{z}_{i+1}) X_i,\cdots, (X_d+\tilde{z}_d) X_i)=: X_i g_a({X}_{0},\dots, {X}_d).
\]
Ainsi, réécrivons $P$ sous la forme 
\begin{eqnarray*}
P(X_0, \cdots,X_i, (X_{i+1}+\tilde{z}_{i+1}) X_i,\cdots, (X_d+\tilde{z}_d) X_i) & = & u X_{d_0}^{m_{0}} \cdots X_{d_{j_0-1}}^{m_{j_0-1}} \prod_{a \in M_{j_0}^{\perp}\{ 0 \} }  X_i g_a  \\ 
	& = & v X^{m_0}_{d_0}\cdots X^{m_{j_0-1}}_{d_{j_0-1}} X_{d_{j_0}}^{m_{j_0}} \prod_{a\in M_{j_0+1}^{\perp}\backslash  \{0\} } g_a \\ 
\end{eqnarray*}
avec $v=u \prod_{a\notin M_{j_0+1}^{\perp}\backslash\{0\}} g_{a}$.    D'après \ref{lemReg2} 3.,  chacun des $g_a$ dans le produit définissant $v$ est inversible car $z\notin Y_{a,i}$. On obtient 
\[
\widehat{\Of}_{z}=\OC_{\breve{K}}\llbracket X_0, \ldots,  X_d \rrbracket/((v X_{d_0}^{m_0} \cdots  X_{d_{j_0}}^{m_{j_0}} (\prod_{a\in M_{j_0+1}^{\perp}\backslash\{0 \}  }  g_a) - \varpi ).
\]


Montrons que cette description vérifie les hypothèses demandées.  Pour le point 1., c'est clair par construction.   Le point 3. montre la régularité de $\tilde{X}_{d_{0}}, \ldots,  \tilde{X}_{d_{j_0}}, \tilde{X}_{d_{j_0+1}+1}, \tilde{X}_{d_{j_0+1}+2},\ldots, \tilde{X}_d$.  \footnote{on peut aussi raisonner par récurrence sur $i$ grâce à \ref{lemReg2}.}   Ainsi, on peut appliquer les arguments de la preuve de  \ref{lemReg2}  pour obtenir 
\[
V(X_{d_j}) = Y_{M_j,i+1}\times_{Z_{i+1}} \widehat{Z}_{i+1,z} \et  V(g_{a}) = Y_{a,i+1}\times_{Z_{i+1}} \widehat{Z}_{i+1,z}
\]
pour $j\leq j_{0}+1$ et $a\in M_{j_0+1}^{\perp} \backslash \{0\}$.     Le point 2. est alors vérifié.  Pour le point 3. en $z$, cela découle de l'hypothèse analogue en $\tilde{z}$ et de la relation \[g_a({X}_{0},\dots, \tilde{X}_d)=f_a( X_0, \cdots,X_i, (X_{i+1}+\tilde{z}_{i+1}) X_i,\cdots, (X_d+\tilde{z}_d) X_i)/X_i\] pour $a\in M_{j_0+1}^{\perp}\backslash \{0\}$.


\end{proof}

\section{Relation avec un modèle semi-stable d'une variété de Shimura\label{sssectionlt1sh}}

Nous rappelons la construction de modèles entiers pour certaines variétés de Shimura développés dans (\cite[chapitre III.3]{yosh}) et (\cite[chapitre 2.4, chapitre 4]{harrtay}) et nous décrivons le lien entre ces modèles et ceux des sections précédentes. 

Soit $F=EF^+$ un corps CM avec $F^+$totalement réel  de degré\footnote{noté $d$ dans \cite{harrtay}} $k$ et $E$ quadratique imaginaire où $p$ est décomposé. Fixons $r$ un entier positif et donnons nous $w_1(=w),\cdots,w_r$ des places de $F$ au-dessus de $p$ tel que \footnote{Pour montrer l’existence d’une telle extension $F$, on se ramène au cas où $K/\Q_p$ est galoisienne quitte à prendre une clôture galoisienne et à passer aux invariants sous Galois. Par résolubilité de l’extension, on peut raisonner sur des extensions abéliennes cycliques par dévissage. Trouver $F$ revient alors à montrer l'existence de certain  caractère  par théorie du corps de classe ce qui est réalisé dans \cite[appendice A.2]{blggt} par exemple.} $F_w=K$. Soit $B/F$ une algèbre à division de dimension\footnote{noté $n^2$ dans \cite{harrtay}} $(d+1)^2$ déployé en la place $w$ (voir p.51 de \cite{harrtay} pour les hypothèses supplémentaires imposées sur $B$). Nous appelons $G$ le groupe réductif défini dans \cite{harrtay} p.52-54. Soit $U^p$ un sous-groupe ouvert compact assez petit de $G(\A^{\infty,p})$ et, $m=(1,m_2,\cdots, m_r)\in \N^r$, Nous nous intéresserons au problème modulaire considéré dans \cite[p. 108-109]{harrtay} qui est  représentable par  ${X}_{U^p,m}$ un schéma  sur $\OC_K$ propre, plat de dimension  $d+1$. 

Nous n'allons pas décrire en détail le problème modulaire représenté par ${X}_{U^p,m}$ mais nous rappelons seulement que le foncteur associé classifie les quintuplets $(A,\lambda, \iota, \eta^p , (\alpha_i)_i)$ à isomorphisme près où $A$ est un schéma abélien de dimension $k(d+1)^2$ muni d'une polarisation $\lambda$ première à $p$, d'une $\OC_B$-action et  d'une structure de niveau $\alpha_1 :\varpi^{-1}\varepsilon\OC_{B_w}/\varepsilon\OC_{B_w}\to \varepsilon A[\varpi]$ où $\varepsilon$ est un idempotent de $\mat_{d+1}(\OC_K)$ (avec quelques compatibilité entre ces données). On rappelle que  l'on a une identification $\varepsilon\OC_{B_w}\cong \OC_K^{d+1}$ par équivalence de Morita. 
Nous écrirons $(A,\lambda,\iota,\eta^p, (\alpha_i)_i)$ le quintuplet universel du problème modulaire $X_{U^p,m}$.

La fibre spéciale $\bar{X}_{U^p,m}={X}_{U^p,m}\otimes \F$ admet une stratification par des sous-schémas fermés réduits $\bar{X}_{U^p,m}=\bigcup_{0\le h\le d} \bar{X}_{U^p,m}^{[h]}$ de dimension pure $h\in\llbracket 0,d\rrbracket$. L'espace $\bar{X}_{U^p,m}^{[h]}$ est la clôture de l'ensemble des points fermés $s$ où $\GC_{A,s}$ a pour hauteur étale inférieure ou égale à $h$ (cf p. 111 dans \cite[Corollary III.4.4]{harrtay}). Chacun de ces espaces admet un recouvrement $\bar{X}_{U^p,m}^{[h]}=\bigcup_M \bar{X}_{U^p,m,M}$ où $M$ parcourt les sous-espaces $\F$-rationnels de $\P_{\F}^d$ de dimension $h$ \cite[3.2 (see Remark 10(2))]{mant}. Nous allons maintenant étendre les scalaires à $\spec(\OC_{\breve{K}})$ et noter ${{\rm Sh}}_{}:={X}_{U^p,m}\otimes \OC_{\breve{K}}$, $\bar{{\rm Sh}}_{}:=\bar{X}_{U^p,m}\otimes \bar{\F}$, $\bar{{\rm Sh}}_{M}:=\bar{X}_{U^p,m,M}\otimes \bar{\F}$ et $\bar{{\rm Sh}}_{}^{[h]}:=\bar{X}_{U^p,m}^{[h]}\otimes \bar{\F}$ ainsi que $\hat{{\rm Sh}}$ la complétion de ${\rm Sh}$ le long de la fibre spéciale $\bar{{\rm Sh}}$.

Comme dans la section précédente, on construit une suite de modèles entiers $\hat{\rm Sh}_0=\hat{\rm Sh}, \hat{\rm Sh}_1,\cdots,\hat{\rm Sh}_d$ s'inscrivant dans un diagramme \[\xymatrix{ 
 \hat{\rm Sh}_i \ar[r]^{\tilde{p}_i} \ar@/_0.75cm/[rrrr]_{p_i} & \hat{\rm Sh}_{i-1} \ar[r]^{\tilde{p}_{i-1}} & \cdots \ar[r]^{\tilde{p}_2} & \hat{\rm Sh}_1 \ar[r]^{\tilde{p}_1} & \hat{\rm Sh}_0}\] en réalisant successivement les éclatement admissibles le long des  transformés stricts de $\bar{{\rm Sh}}^{[i-1]}$ (cela revient aussi à éclater ${\rm Sh}$ puis à compléter $p$-adiquement chaque modèle intermédiaire). On notera aussi $\bar{{\rm Sh}}_{M,i}$ le transformé strict de $\bar{{\rm Sh}}_{M}$. De même si $\bar{s}$ est un point géométrique fermé centré en $s\in \bar{X}_{U^p,m}^{[0]}$ que l'on voit par changement de base comme un point fermé de $\bar{{\rm Sh}}_{}^{[0]}$, on note $\bar{{\rm Sh}}_{\bar{s},i}:=p_i^{-1}(\bar{s})$. En particulier, $\bar{{\rm Sh}}_{\bar{s},1}$ s'identifie à un espace projectif $\P^d_{\bar{\F}}$. Le théorème suivant relie les constructions relatives au premier revêtement de la tour de Lubin-Tate à celles de la variété de Shimura. 

\begin{theo}[Harris-Taylor,Yoshida]\label{theomodeleshltalg}
Soit $\bar{s}$ un point fermé géométrique centré en $s\in\bar{X}^{[0]}_{U^p,m}$ vu comme un point de ${\rm Sh}$.

\begin{enumerate}
\item (\cite[Lemma III.4.1]{harrtay}) On a un isomorphisme \[Z_0\cong \spec(\hat{\Of}_{{\rm Sh},\bar{s}})\cong \spec(\hat{\Of}_{\hat{\rm Sh},\bar{s}})\] Il en résulte un morphisme $Z_0\to \hat{\rm Sh}$.
\item (\cite[Lemma 4.4]{yosh}) Via cette application $Z_0\to \hat{\rm Sh}$, on a des isomorphismes \[Y_M\cong {\bar{\rm Sh}}_M \times_{\hat{\rm Sh}}Z_0\et Y^{[h]}\cong \bar{{\rm Sh}}^{[h]}\times_{\hat{\rm Sh}} Z_0\]
\item (\cite[Lemma 4.6]{yosh}) Pour tout $i\le d$, \[Z_i\cong {\hat{\rm Sh}}_i\times_{\hat{\rm Sh}} Z_0 \et  Y_{\{0\},i}\cong {\bar{\rm Sh}}_{\bar{s},i}\times_{\bar{\rm Sh}_i} Z_i\] De plus, il existe un voisinage étale de $Z_d$ vu comme un fermé de ${\rm Sh}_d$ de réduction semi-stable. 
\item $\bar{\rm Sh}_{\bar{s},d}$ est une composante irréductible de $\bar{{\rm Sh}}_d$ et les autres composantes rencontrent $\bar{\rm Sh}_{\bar{s},d}$ exactement en les espaces $\bar{\rm Sh}_{\bar{s},d}\cap \bar{\rm Sh}_{M,d}$ où $M$ est un hyperplan $\F$-rationnel de $\P^d_{zar,\bar{\F}}$.
\item Le tube $]\bar{{\rm Sh}}_{\bar{s},d}^{lisse}[_{\hat{{\rm Sh}}_d}\otimes \breve{K}(\varpi_N)\subset \lt^1 \otimes \breve{K}(\varpi_N)$ au dessus du lieu lisse $\bar{\rm Sh}_{\bar{s},d}^{lisse}:=\bar{\rm Sh}_{\bar{s},d}\backslash \bigcup_Y Y$ (où $Y$ parcourt les composantes irréductibles de $\bar{\rm Sh}_d$ différentes de $\bar{{\rm Sh}}_{\bar{s},d}$) admet un modèle lisse isomorphe à la variété de Deligne-Lusztig $\dl^d_{\bar{\F}}$.
\end{enumerate}    

\end{theo}

\begin{proof}
Le premier point a été prouvé dans \cite[Lemma III.4.1]{harrtay}. Le premier isomorphisme du point 2. s'obtient en explicitant l'isomorphisme du point 1. et en comparant la définition de ${\rm Sh}_M$ \cite[Lemma 4.4]{yosh}. L'autre isomorphisme du point 2. et ceux du point 3. s'en déduisent par combinatoire et compatibilité du procédé d'éclatement \cite[Lemma 4.6]{yosh} et de complétion $p$-adique. La dernière assertion du troisième point découle de l'argument technique de la preuve de \cite[Proposition 4.8 (i)]{yosh}. Le point 4. résulte de la description de la fibre spéciale de $Z_d$ réalisée dans   \ref{TheoZH}.  Le dernier  point  découle de  4. et  de \ref{TheoZH} 3. et 4.
\end{proof}

\section{Cohomologie des vari\'et\'es de Deligne-Lusztig\label{ssectiondl}}


Considérons la variété 
 $$\Omega^d_{\F}:=\P_{\F}^d \backslash \bigcup_H H,$$ o\`u $H$ parcourt l'ensemble des hyperplans $\F_q$-rationnels. Elle admet une action naturelle de $\gln_{d+1}(\F)$ et un revêtement fini étale $\gln_{d+1}(\F)$-équivariant \[\dl_{\F}^d := \{x\in \A^{d+1}_{\F_q}\backslash \{0\}  |  \prod_{a\in \F_q^{d+1}\backslash \{0\} } a_0x_0++a_dx_d=(-1)^d  \}\] de groupe de Galois $\F_{q^{d+1}}^*$ via $\zeta\cdot (x_0,\cdots,x_d)=(\zeta x_0,\cdots, \zeta x_d)$.  
 
On note $\dl^d_{\bar{\F}}$ l'extension des scalaires de $\dl_{\F}^d$ \`a $\bar{\F}$. Soit $l\ne p$ un nombre premier, l'intérêt principal de cette construction est l'étude de la partie cuspidale  
de la cohomologie $l$-adique à support compact de $\dl^d_{\bar{\F}_q}$ dont  nous allons rappeler la description. 

 
   Soit $\theta: \F_{q^{d+1}}^*\to \overline{\mathbb{Q}}_l^*$ un caractère. Si $M$ est un $  \overline{\mathbb{Q}}_l[\F_{q^{d+1}}^*]$-module on note 
 $$M[\theta]={\rm Hom}_{\F_{q^{d+1}}^*}(\theta, M).$$
 On dit que le caractère $\theta$ est 
    \emph{primitif} 
  s'il ne se factorise pas par la norme $\F_{q^{d+1}}^*\to \F_{q^e}^*$ pour tout diviseur propre $e$ de $d+1$.

 Si $\pi$ est une représentation de $\gln_{d+1}( \F_q)$, on dit que $\pi$ est \emph{cuspidale} si 
 $\pi^{N(\F_q)}=0$ pour tout radical unipotent $N$ d'un parabolique propre de $\gln_{d+1}$.  
  La théorie de Deligne-Lusztig (ou celle de Green dans notre cas particulier) fournit: 
 
 \begin{theo}\label{DLet}
  Soit $\theta: \F_{q^{d+1}}^*\to \overline{\mathbb{Q}}_l^*$ un caractère.
  
  a) Si $\theta$  est primitif, alors 
  $\hetc{i}(\dl_{\bar{\F}_q}^d, \bar{\Q}_l)$ est nul pour $i\ne d$ et 
  $$\bar{\pi}_{\theta,l}:=\hetc{d}(\dl_{\bar{\F}_q}^d, \bar{\Q}_l)[\theta]$$
  est une $\gln_{d+1}( \F_q)$-représentation irréductible, cuspidale, de dimension  $(q-1)(q^2-1) \dots (q^d-1)$. Toutes les repr\'esentations cuspidales sont ainsi obtenues.

  b) Si $\theta$ n'est pas primitif, aucune repr\'esentation cuspidale n'intervient dans $\oplus_{i}\hetc{i}(\dl_{\bar{\F}_q}^d, \bar{\Q}_l)[\theta]$. 
 \end{theo}

  \begin{proof}
   Voir \cite[cor. 6.3]{DL}, \cite[th. 7.3]{DL}, \cite[prop. 7.4]{DL}, \cite[prop. 8.3]{DL}, \cite[cor. 9.9]{DL}, \cite[Proposition 6.8.(ii) et remarques]{yosh} pour ces résultats classiques. 
  \end{proof}
  
    Ainsi, la partie cuspidale $\hetc{0}(\dl_{\bar{\F}_q}^d, \bar{\Q}_l)_{\rm cusp}$ de $\oplus_{i} \hetc{i}(\dl_{\bar{\F}_q}^d, \bar{\Q}_l)$ est concentrée en degré $d$, où elle est donnée par $\oplus_{\theta} \bar{\pi}_{\theta,l}\otimes\theta$, la somme directe portant sur tous les caractères primitifs.
 
 \begin{rem} (voir \cite[6.3]{DL}) Soit $N=q^{d+1}-1$ et fixons de isomorphismes $\F_{q^{d+1}}^*\simeq \Z/N\Z$ et 
 $\Z/N\Z^{\vee}\simeq \Z/N\Z$.
    Soient $\theta_{j_1}$ et $\theta_{j_2}$ deux caract\`eres primitifs vus comme des \'el\'ements de $\Z/N\Z$ via $j_1$, $j_2$, les repr\'esentations $\bar{\pi}_{\theta_{j_1}}$ et  $\bar{\pi}_{\theta_{j_2}}$ sont isomorphes si et seulement si il existe un entier $n$ tel que $j_1= q^n j_2$ dans $\Z/N\Z$ . 
 \end{rem}



Nous aurons besoin d'un analogue des résultats  précédents pour 
   la cohomologie rigide. Cela a été fait par Grosse-Klönne dans \cite{GK5}. 
   Si $\theta: \F^*_{q^{d+1}}\to \bar{K}^*$ est un caractère, posons 
   $$\bar{\pi}_{\theta}=\hrigc{*}(\dl_{\F_q}^d/ \bar{K})[ \theta]:=\bigoplus_{i}\hrigc{i}(\dl_{\F_q}^d/ \bar{K})[ \theta],$$
 où 
 $$\hrigc{i}(\dl_{\F_q}^d/ \bar{K}):=\hrigc{i}(\dl_{\F_q}^d)\otimes_{W(\F_q)[1/p]} \bar{K}$$
 et où $M[\theta]$ désigne comme avant la composante $\theta$-isotypique de $M$.

\begin{theo}\label{theodlpith}
Fixons un premier $l\ne p$ et un isomorphisme $\bar{K} \cong \bar{\Q}_l$. Si $\theta$ est un caract\`ere primitif, alors  $$\bar{\pi}_{\theta}:=\hrigc{d}(\dl_{\F_q}^d/ \bar{K})[ \theta]$$ est isomorphe en tant que 
$\gln_{d+1}(\F_q)$-module à $\bar{\pi}_{\theta,l}$, en particulier c'est une représentation irréductible cuspidale.


\end{theo}

\begin{proof} Cela se fait en trois étapes, cf. \cite[4.5]{GK5}. Dans un premier temps, on montre \cite[3.1]{GK5} que 
les $\bar{K}[\gln_{d+1}(\F_q) \times \F^*_{q^{d+1}}]$-modules virtuels 
\[ \sum_i (-1)^i \hetc{i}(\dl_{\bar{\F}_q}^d, \bar{\Q}_l) \et \sum_i (-1)^i \hrigc{i}(\dl_{\F_q}^d/ \bar{K}) \]
co\"incident. 
Il s'agit d'une comparaison standard des formules des traces de Lefschetz en
cohomologies étale $l$-adique et rigide. Dans un deuxième temps (et c'est bien la partie délicate du résultat)
on montre que $\bigoplus_{i}\hrigc{i}(\dl_{\F_q}^d/ \bar{K})[ \theta]$ est bien concentré en degré $d$, cf. \cite[th. 2.3]{GK5}. On peut alors conclure en utilisant le théorème \ref{DLet}.
\end{proof}



\section{Automorphismes équivariants des variétés de Deligne-Lusztig}

\begin{lem}\label{lemautdl}
On a 
$\aut_{\gln_{d+1}(\F)}(\Omega^d_{\bar{\F}})= \{ 1 \}$. 
\end{lem}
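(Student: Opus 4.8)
The plan is to study $\Omega^d_{\bar\F}$ through its affine coordinate ring $R:=\OC(\Omega^d_{\bar\F})$ and its group of units. Write $H_a=\{\ell_a=0\}$ for the $\F_q$-rational hyperplane attached to $a\in\F_q^{d+1}\setminus\{0\}$, with $\ell_a=\sum_i a_ix_i$, and let $\mathcal H$ be the finite set of $\F_q$-rational hyperplanes. First I would record the elementary description of $R$: a global function on the complement $\Omega^d_{\bar\F}$ of these hyperplanes in $\P^d$ can be written $g/\prod_a\ell_a^{m_a}$ with $g$ homogeneous of degree $\sum_a m_a$; grouping the monomials of $g$ into products of ratios $x_i/\ell_a=\ell_{e_i}/\ell_a$ shows that $R$ is generated as a $\bar\F$-algebra by the $\ell_a/\ell_b$, and a similar bookkeeping together with unique factorization (the $\ell_a$ being pairwise non-proportional irreducible polynomials) gives $R^\times=\bar\F^\times\cdot\{\prod_a\ell_a^{n_a}:\sum_a n_a=0\}$. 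Hence $U:=R^\times/\bar\F^\times$ is canonically the augmentation submodule $\ker(\Z[\mathcal H]\to\Z)$, a free abelian group of positive rank on which $\gln_{d+1}(\F)$ acts through the transitive permutation action of $\pgln_{d+1}(\F_q)$ on $\mathcal H$.

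Next, a $\gln_{d+1}(\F)$-equivariant automorphism $\phi$ of $\Omega^d_{\bar\F}$ induces a $\bar\F$-algebra automorphism $\phi^*$ of $R$ commuting with this $\pgln_{d+1}(\F_q)$-action, hence a $\pgln_{d+1}(\F_q)$-equivariant automorphism $\bar\phi$ of $U$. The key point is that $\en_{\Z[\pgln_{d+1}(\F_q)]}(U)=\Z$. Indeed $\dim_{\bar\Q}\en_{\bar\Q[\gln_{d+1}(\F_q)]}(\bar\Q[\mathcal H])=|P\backslash\gln_{d+1}(\F_q)/P|=|S_d\backslash S_{d+1}/S_d|=2$, where $P$ is the (maximal parabolic) stabilizer of a hyperplane; since the augmentation exhibits $\mathbf 1$ as a constituent of multiplicity one, this forces $\bar\Q[\mathcal H]=\mathbf 1\oplus W$ with $W=U\otimes\bar\Q$ irreducible. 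Thus $U\otimes\Q$ is absolutely irreducible, so $\en_{\Q[\pgln_{d+1}(\F_q)]}(U\otimes\Q)=\Q$; and since $\en_{\Z[\pgln_{d+1}(\F_q)]}(U)$ is a subring of $\Q$ while $U\ne nU$ for $|n|>1$, it equals $\Z$. Consequently $\bar\phi\in\aut_{\Z[\pgln_{d+1}(\F_q)]}(U)=\{\pm\id_U\}$.

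It remains to exclude $\bar\phi=-\id$ and to upgrade $\bar\phi=\id$ to $\phi=\id$; for this I would use, for a triple $a,b,c\in\F_q^{d+1}\setminus\{0\}$ with $a+b+c=0$ and pairwise non-proportional (e.g. $a=e_0$, $b=e_1$, $c=-e_0-e_1$, valid for $d\ge 1$), the identity $\ell_a+\ell_b+\ell_c=0$, which on $\Omega^d_{\bar\F}$ reads $u+v=-1$ for the units $u:=\ell_a/\ell_c$ and $v:=\ell_b/\ell_c$, the set $\{u,v\}$ being $\bar\F$-linearly independent in $R$. If $\bar\phi=\id$, then $\phi^*(u)=\mu u$ and $\phi^*(v)=\nu v$ with $\mu,\nu\in\bar\F^\times$, and applying $\phi^*$ to $u+v=-1$ forces $\mu=\nu=1$ by linear independence; letting $a$ vary over all vectors with $H_a\ne H_c$ (for $c$ fixed) shows $\phi^*$ fixes every generator $\ell_a/\ell_c$ of $R$, so $\phi^*=\id$ and $\phi=\id$. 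If $\bar\phi=-\id$, then $\phi^*(u)=\mu u^{-1}$ and $\phi^*(v)=\nu v^{-1}$, and applying $\phi^*$ to $u+v=-1$ and clearing denominators yields $\ell_c(\mu\ell_b+\nu\ell_a)=-\ell_a\ell_b$ in $\bar\F[x_0,\dots,x_d]$, contradicting unique factorization since $\ell_c$ is proportional to neither $\ell_a$ nor $\ell_b$. The main obstacle is the second paragraph, i.e. pinning down the equivariant endomorphism ring of $U$; once $R$, its units, and the irreducibility of $U\otimes\Q$ are in hand, the rest is essentially formal.
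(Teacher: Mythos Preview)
Your proof is correct and takes a genuinely different route from the paper's. The paper works directly in affine coordinates $X_i = x_i/x_0$ and exploits specific elements of $\gln_{d+1}(\F)$ (permutations of the $X_i$, the stabilizer of $X_1$, translations $X_1 \mapsto X_1 + c$, and finally the inversion $X \mapsto 1/X$) to successively constrain $\psi(X_1)$: first reducing it to a rational function of $X_1$ alone, then to a polynomial, then to $\lambda X_1$, and finally to $X_1$. Your argument instead identifies $R^\times/\bar{\F}^\times$ with the augmentation lattice $U \subset \Z[\HC]$, proves $\en_{\Z[\pgln_{d+1}(\F_q)]}(U)=\Z$ via the multiplicity-one decomposition of $\bar{\Q}[\HC]$ (using the double-coset count $|\SG_d \backslash \SG_{d+1}/\SG_d|=2$), and then uses the single additive relation $\ell_a + \ell_b + \ell_c = 0$ both to eliminate $-\id$ and to rigidify the scalar ambiguities when $\bar\phi=\id$. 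Your approach is more structural and would adapt readily to analogous statements for other groups (whenever the permutation module on the hyperplane arrangement decomposes as trivial plus irreducible); the paper's approach is entirely elementary and self-contained but more ad hoc.
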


\begin{proof}
On a $\Of(\Omega^d_{\bar{\F}})= \bar{\F} [X_1, \dots, X_d, \frac{1}{\prod_{a \in \P^d(\F)} l_a(1,X)}]$. Soit $\psi$ un automorphisme $\gln_{d+1}(\F)$-\'equivariant, $\psi$ est d\'etermin\'e par l'image de $X_i$ pour tout $i$ qui sont des \'el\'ements inversibles et qui s'\'ecrivent  en fractions irr\'eductibles de   la forme 
\[ \psi(X_i)=\frac{P_i(X)}{Q_i(X)} = \lambda_i\prod_{a \in \P^d(\F)} l_a(1,X)^{\beta_{a,i}}, \ \ \ \beta_{a,i}\in \Z\]
On veut montrer que $\psi(X_i)=X_i$ pour tout $i$. 
\begin{itemize}
\item Prenons $\sigma : \left\llbracket 1,d \right\rrbracket\to \left\llbracket 1,d \right\rrbracket$ une permutation, on trouve $g_{\sigma} \in \gln_{d+1}(\F)$ tel que $g_{\sigma}.X_i=X_{\sigma(i)}$. La relation $g_{\sigma}.\psi(X_i)=\psi(g_{\sigma}.X_i)$ impose l'\'egalit\'e 
\[ \frac{P_{\sigma(i)}(X)}{Q_{\sigma(i)}(X)}= \frac{P_i(\sigma(X))}{Q_i(\sigma(X))}. \]
Nous nous int\'eressons uniquement \`a $\frac{P_1(X)}{Q_1(X)}$. Nous supposons $d \ge 2$ et nous voulons nous ramener au cas $d=1$.  
\item On consid\`ere $\stab(X_1) \subset \gln_{d+1}(\F)$. Par \'equivariance de $\psi$, on voit que $\frac{P_1}{Q_1}$ est fix\'e par $\stab(X_1)$. Introduisons $U_1= \bigcup_{i \neq 0,1} D^+(z_i^*) \subset \P^d(\F)$ avec $(z_i^*)_{0\le i\le d}$ la base duale de $\F^{d+1}$ (on a identifi\'e $\P^d(\F)$ avec $\P((\F^{d+1})^*)$). Sous ces choix, on a $X_i=\frac{z_i^*}{z_0^*}$ pour $1\le i\le d$. On v\'erifie alors que $l_a(1,X)$ est un polynôme en $X_1$ si et seulement si $a \notin U_1$. On a alors une \'ecriture unique en fractions irr\'eductibles :
\[ \frac{P_1(X)}{Q_1(X)}= \frac{\widetilde{P}_1(X_1)}{\widetilde{Q}_1(X_1)} \prod_{a \in U_1}l_a(1,X)^{\beta_{a,1}}.\]
Comme $\stab(X_1)$ agit transitivement \footnote{Soit $a$, $b$ dans $U_1$. Les familles $\{ (1,0, \dots,0), (0,1,0, \dots, 0), a \}$ et  $\{ (1,0, \dots,0), (0,1,0, \dots, 0), b \}$ sont libres et on peut trouver un endomorphisme qui fixe  $(1,0, \dots, 0)$, $(0,1,0, \dots, 0)$  et qui  envoie $a$ sur $b$.} sur $U_1$ et laisse stable $\frac{\widetilde{P}_1(X_1)}{\widetilde{Q}_1(X_1)}$, on obtient l'\'ecriture 
\[ \frac{P_1(X)}{Q_1(X)}= \frac{\widetilde{P}_1(X_1)}{\widetilde{Q}_1(X_1)} (\prod_{a \in U_1} l_a(1,X))^n \] ie. $\beta_{a,1}$ ne dépend pas de $a$ lorsque $a\in U_1$.
\item Soit $c$ dans $\F^*$, il existe $g_c \in \gln_{d+1}(\F)$ tel que $g_c(X_1)=X_1+c$ et $g_c(X_i)=X_i$ pour $i \neq 1$. $g_c$ laisse stable $U_1$ et on a ainsi, $g_c.(\prod_{a \in U_1} l_a(1,X))^n=(\prod_{a \in U_1} l_a(1,X))^n$. La relation $\psi(g_c.X_1)= g_c.\psi(X_1)$ impose l'\'egalit\'e 
\[ c+ \frac{\widetilde{P}_1(X_1)}{\widetilde{Q}_1(X_1)} (\prod_{a \in U_1} l_a(1,X))^n = \frac{\widetilde{P}_1(X_1+c)}{\widetilde{Q}_1(X_1+c)} (\prod_{a \in U_1} l_a(1,X))^n. \]
D'o\`u 
\[c.( \prod_{a \in U_1} l_a(1,X))^{-n} =\frac{\widetilde{P}_1(X_1+c)}{\widetilde{Q}_1(X_1+c)}-\frac{\widetilde{P}_1(X_1)}{\widetilde{Q}_1(X_1)}  \in \bar{\F}(X_1). \]
 
 Ainsi, $n=0$ car $\prod_{a \in U_1} l_a(1,X) \notin \bar{\F}(X_1)$ et $c\neq 0$. D'où, 
 \[ \psi(X_1) \in \bar{\F}[X_1, \frac{1}{\prod_{a \in \F} (X_1-a)}]^*=\bar{\F}^* \times \prod_{a \in \F} (X_1-a)^{\Z}. \]
 
\item On s'est ramen\'e \`a $d=1$ et on \'ecrit $X=X_1$. On a l'\'ecriture en fractions rationnelles irr\'eductibles $\psi(X)= \frac{\widetilde{P}_1(X)}{\widetilde{Q}_1(X)}$ dont tous les p\^oles et z\'eros sont dans $\F$. De plus,  pour tout $c$, d'apr\`es le point pr\'ec\'edent, on a
\[ c+\frac{\widetilde{P}_1(X)}{\widetilde{Q}_1(X)}= \frac{\widetilde{P}_1(X+c)}{\widetilde{Q}_1(X+c)}. \] 
Comme $c\widetilde{Q}_1+\widetilde{P}_1$ est encore premier à $\widetilde{Q}_1$, on obtient $\widetilde{Q}_1(X+c) = \widetilde{Q}_1(X)$ pour tout $c$ par unicité de l'\'ecriture en fractions rationnelles irr\'eductibles. Ainsi,  $\widetilde{Q}_1(X)= (\prod_{a \in \F}(X-a))^k$ car les racines  de $\widetilde{Q}_1(X)$ vues dans $\bar{\F}$ sont contenues dans $\F$ et $\F$ agit transitivement dessus  par translation. Supposons $k \neq 0$, il existe $c$ tel que $c \widetilde{Q}_1(X) + \widetilde{P}_1(X)$ est non-constant\footnote{Si ce n'est pas le cas, $\widetilde{Q}_1$ et $\widetilde{P}_1$ sont tous deux constants ce qui contredit la bijectivité de $\psi$.} et admet donc  une racine dans $\bar{\F}$ et donc dans $\F$. Cette dernière ne peut être une racine de $\widetilde{Q}_1$ par primalité, ce qui impose $k=0$ et  $\widetilde{Q}_1$ est constante 

\item On a montr\'e que $\psi(X)=P(X) \in \F[X]$. Soit une matrice $g$ telle que $g.X= \frac{1}{X}$, comme $\psi(g.X)=g. \psi(X)$, on a\footnote{Notons que $P(\frac{1}{X})$ signifie que l'on a évalué $P$ en $\frac{1}{X}$ ie. $P(\frac{1}{X})=\sum_i (a_i(\frac{1}{X^i})$ si $P(X)=\sum_i a_i X^i$}
 \[\frac{1}{P(X)}=P(\frac{1}{X}).\]
 Les z\'eros de $P(X)$ sont les p\^oles de $P(\frac{1}{X})$ qui sont réduits au singleton $\{ 0 \}$ d'o\`u $P(X)=\lambda X^n$ pour un certain $n$. Comme $\psi$ est une bijection, $n=1$ et $P=\lambda X$. 

\item D'après ce qui précède, $P(X+c)=P(X)+c$ pour tout $c\in \F$ d'où $\lambda c=c$ et $\lambda=1$. Ainsi, $\psi=\id$!
\end{itemize}
\end{proof}




\section{Cohomologie de De Rham du premier rev\^etement\label{sssectionlt1dr}}

Nous sommes maintenant en mesure d'énoncer et de prouver le résultat technique  principal de cette article.

\begin{theo}\label{theolt1dr}
On a un isomorphisme $\gln_{d+1}(\OC_K)$-équivariant :
\[\hdr{*} (\lt^1/\breve{K}_N)\cong \hrig{*} (\dl^d_{\bar{\F}}/\breve{K}_N)\] Par dualité de Poincaré, on a un isomorphisme semblable pour les cohomologies à support compact.
\end{theo}

\begin{proof}
Comme dans l'énoncé du théorème  \ref{theomodeleshltalg}, on fixe un point géométrique $\bar{s}$ de $\bar{\rm Sh}^{[0]}$ ainsi qu'une identification $Z_0\cong \spf (\hat{\Of}_{{\rm Sh},\bar{s}})$. On se donne de plus un schéma formel p-adique $V$ de réduction semi-stable voisinage  de $Z_d$ dans $\hat{\rm Sh}_d$ cf \ref{theomodeleshltalg} 3. et on note $U^{lisse}$ le modèle lisse de $]\bar{\rm Sh}_{d,\bar{s}}^{lisse}[_U\otimes \breve{K}_N$ construit dans \ref{theomodeleshltalg} 4. et $U^{lisse}_s$ sa fibre spéciale. D'après \ref{theomodeleshltalg} 1., 2., on a une suite d'isomorphismes \[\lt^1\cong]\bar{s}[_{\hat{\rm Sh}}\cong  ]p_d^{-1}(\bar{s})[_{\rm \hat{Sh}_d}  \cong ]\bar{\rm Sh}_{d,\bar{s}}[_{\hat{\rm Sh}_d}\cong ]\bar{\rm Sh}_{d,\bar{s}}[_V\] avec $p_d$ la flèche ${\rm \hat{Sh}_d} \rightarrow \rm{ \hat{Sh} }_0$ obtenue par éclatement.  L'identité $p_d^{-1}(\bar{s})=  ]\bar{\rm Sh}_{d,\bar{s}}[_{\hat{\rm Sh}_d}$ découle de \ref{lemirrzh} 5.  On peut alors appliquer le théorème \ref{theoexcision} dans $V$ qui est $p$-adique de réduction semi-stable et obtenir cette suite d'isomorphismes \[\hdr{*} (\lt^1\otimes \breve{K}_N)\cong \hdr{*} (]\bar{\rm Sh}_{d,\bar{s}}[_V\otimes \breve{K}_N)\cong\hdr{*} (]\bar{\rm Sh}_{d,\bar{s}}^{lisse}[_V \otimes \breve{K}_N)\] Ces morphismes naturels se déduisent d'applications de restriction  qui sont  $\gln_{d+1}(\OC_K)$-équivariantes (l'espace $]\bar{\rm Sh}_{d,\bar{s}}^{lisse}[_V)$ est stable pour cette action).

D'après \ref{theomodeleshltalg} 4. et \ref{theopurete}, on a \[\hdr{*}(]\bar{\rm Sh}_{d,\bar{s}}^{lisse}[_V\otimes \breve{K}_N)\cong\hrig{*}(U_s^{lisse}/\breve{K}_N)\cong \hrig{*}(\dl^d_{\bar{\F}}/\breve{K}_N)\]  Comme l'identification entre les fibres spéciales est  $\gln_{d+1}(\OC_K)$-équivariant, le morphisme au niveau des cohomologies l'est aussi.

L'isomorphisme de l'énoncé s'obtient en composant chacune de ces bijections équivariantes intermédiaires. 
\end{proof}


\section{Actions de groupes sur la partie lisse\label{sssectionlt1dlacgr}} 

Nous notons $N= q^{d+1}-1$ et $K_N=K(\varpi_N)$ o\`u $\varpi_N$ est une racine $N$-i\`eme de $\varpi$ et $\Omega^d_{\bar{\F}}=\P^d \setminus \bigcup_{H \in \HC_1} H=\spec(A)$ et ${\rm DL}^d_{\bar{\F}}=\spec(B)$ . 

L'interprétation modulaire de $Z_0\otimes \OC_C$ fournit une action naturelle des trois groupes $\OC_D^*$, $G^{\circ}$  et de $I_K$ sur cet espace. Par naturalité du procédé d'éclatement, ces actions se prolongent à chaque modèle $Z_i$ et les fl\`eches $p_i : Z_i \to Z_0$ sont \'equivariantes. Ces actions ont pour effet de permuter les composantes irréductibles de la fibre spéciale et leurs intersections à savoir les  fermés de la forme $Y_{M,i}$ et ces transformations  respectent la dimension des espaces $M$. Ainsi, les trois groupes $G^{\circ}$, $\OC_D^*$ et $I_K$ laissent stable la composante $Y_{\{0\} ,i}$ et permutent les autres, l'action de se transporte à la composante ouverte (\ref{TheoZH} 3.) $Y_{\{0\} ,d}^{lisse}=\cdots=Y_{\{0\} ,0}^{lisse}$ et donc à la variété de Deligne-Lusztig ${\rm DL}^d_{\bar{\F}}$ d'après le théorème  \ref{TheoZH} 4. Mais, si l'on fixe des identifications $I_K/ I_{K_N} \cong\F_{q^{d+1}}^* \cong  \OC_D^*/1+ \Pi_D \OC_D$ (on rappelle que $\F_{q^{d+1}}^* $ est isomorphe à $\gal(\dl_{\bar{\F}}^d/ \Omega_{\bar{\F}}^d)$), on obtient alors une autre action des trois groupes sur ${\rm DL}^d_{\bar{\F}}$. 

\begin{theo}\label{theodlactgdw}
Sous le choix d'identification convenable $I_K/ I_{K_N} \cong\F_{q^{d+1}}^* \cong  \OC_D^*/1+ \Pi_D \OC_D$, les différentes actions décrites plus haut de $G^{\circ}$, $\OC_D^*$ et $I_K$ sur ${\rm DL}^d_{\bar{\F}}$ co\"incident.  
\end{theo}

\begin{proof}
Pour l'action de $G^{\circ}$, cela d\'ecoule clairement de l'action de $\gln_{d+1}(\F)$ sur les composantes irr\'eductibles $Y_a$ \ref{TheoZH}4.. Le mod\`ele lisse dont la fibre sp\'eciale est isomorphe \`a ${\rm DL}^d_{\bar{\F}}$ est obtenu en \'etendant les scalaires \`a $\OC_{\breve{K}_N}$ puis en normalisant dans $\lt^1 \otimes \breve{K}_N$. Cette op\'eration a pour effet de changer les variables  $X_0, \dots, X_d$ d\'efinies dans la partie \ref{sssectionltneq} en les variables  $\frac{X_0}{\varpi_N}, \dots, \frac{X_d}{\varpi_N}$. Ceci explique le fait que l'action de $I_K$ est triviale sur $I_{K_N}$ et que cette action identifie $I_K/ I_{K_N} \cong \gal({\rm DL}^d_{\bar{\F}}/ \Omega^d_{\bar{\F}}) \cong \F_{q^{d+1}}^*$ (cf \cite[Proposition 5.5. (ii)]{yosh}). 
  
Le point le plus d\'elicat est la description de l'action de $\OC_D^*$. 
Celle-ci commute \`a l'action de $I_K$ qui  agit par automorphismes du groupe de Galois $\gal(\dl_{\bar{\F}}^d/ \Omega_{\bar{\F}}^d)$. Comme $A= \Of(\Omega^d_{\bar{\F}})$ est pr\'ecis\'ement l'ensemble des fonctions invariantes sous ce groupe, $\OC_D^*$ pr\'eserve $A$. La restriction de cette action \`a $A$ d\'efinit des automorphismes $\gln_{d+1}(\F)$-\'equivariants, ils sont triviaux sur $A$ par \ref{lemautdl}. Ainsi, l'action étudiée d\'efinit un morphisme $\OC_D^* \to \gal(\dl_{\bar{\F}}^d/ \Omega_{\bar{\F}}^d)$ qui est trivial sur $1+ \Pi_D \OC_D$ car c'est un pro-$p$-groupe 
qui s'envoie sur un groupe cyclique d'ordre premier \`a $p$ d'où une flèche $  \OC_D^*/(1+ \Pi_D \OC_D) \to \gal(\dl_{\bar{\F}}^d/ \Omega_{\bar{\F}}^d)$. Il s'agit de voir que c'est un isomorphisme voire une injection, par \'egalit\'e des cardinaux.

D\'eployons $D^*$ dans $\gln_{d+1}(K_{(d+1)})$ avec $K_{(d+1)}$ l'extension non-ramifiée de degré $d+1$. Prenons $b \in \OC_D^*/(1+ \Pi_D \OC_D)$ et relevons-le en $\tilde{b} \in \OC_D^*$ r\'egulier elliptique. En effet, les \'el\'ements r\'eguliers elliptiques forment un ouvert Zariski de $\OC_D^*$ et sont donc denses pour la topologie $p$-adique. Appelons $\iota(b)$ l'image de $b$ dans $\gal(\dl_{\bar{\F}}^d/ \Omega_{\bar{\F}}^d)$. La description explicite de la cohomologie des vari\'et\'es de Deligne-Lusztig nous donne\footnote{On a $\tr(\iota(b) | \hetc{\heartsuit}(\dl_{\bar{\F}}^d,\bar{\Q}_l)[\theta])=\theta(\iota(b))\dim\hetc{\heartsuit}(\dl_{\bar{\F}}^d,\bar{\Q}_l)[\theta]=\theta(\iota(b))\dim\hetc{\heartsuit}(\Omega_{\bar{\F}}^d,\bar{\Q}_l)$ pour tout caractère $\theta$. Ainsi, $\tr(\iota(b) | \hetc{\heartsuit}(\dl_{\bar{\F}}^d,\bar{\Q}_l))=\dim\hetc{\heartsuit}(\Omega_{\bar{\F}}^d,\bar{\Q}_l)\sum_{\theta}\theta(\iota(b))$ et la dernière somme est nulle ssi $\iota(b)\neq 1$}
\[ \iota(b)=1 \Leftrightarrow \tr(\iota(b) | \hetc{\heartsuit}(\dl_{\bar{\F}}^d,\bar{\Q}_l)) \neq 0\]
Mais on a la formule de trace \cite[Th\'eor\`eme (3.3.1)]{stra} 
\[ \tr(\iota(b) | \hetc{\heartsuit}(\dl_{\bar{\F}}^d, \bar{\Q}_l)) = \tr(\tilde{b} | \hetc{\heartsuit}(\lt^{1}, \bar{\Q}_l))= | \fix(\tilde{b}, \lt^1(C)) |  \]
La premi\`ere \'egalit\'e d\'ecoule de l'isomorphisme entre les cohomologies de \cite{yosh}. 

Nous expliquons comment calculer le nombre de points fixes sur $\lt^1(C)$ d'un élément  $\tilde{b}$  r\'egulier elliptique. Nous avons un morphisme des p\'eriodes analytique $\OC_D^*$-\'equivariant $\pi_{GH}: \lt^1\to \lt^0 \to \P(W)$ \cite{grho} o\`u $W$ s'identifie \`a $C^{d+1}$ et l'action sur $W$ se d\'eduit du choix d'un isomorphisme $D \otimes C \cong \mat_{d+1}(C)$. Ainsi $\tilde{b}$ admet $d+1$ points fixes dans $\P(W)$ qui sont les droites propres. Fixons $x \in \P(W)$ une de ces droites propres. D'apr\`es \cite[Proposition (2.6.7)(ii)-(iii)]{stra}, il existe alors  $g_{\tilde{b}} \in G=\gln_{d+1}(K)$ ayant m\^eme polyn\^ome caract\'eristique que $\tilde{b}$ tel que 
\[ \pi_{GH}^{-1}(x) \cong G/G_1\varpi^{\Z} \text{ et } b.(hG_1\varpi^{\Z}) = (g_bh)G_1\varpi^{\Z} \]
avec $G_1=1+ \varpi \mat_{d+1}(\OC_K)$. Ainsi $hG_1\varpi^{\Z}$ est un point fixe si et seulement si $h^{-1}g_{\tilde{b}}h \in G_1\varpi^{\Z}$.

Supposons maintenant $b\in \OC_D^*/(1+ \Pi_D \OC_D)$ non-trivial et montrons $| \fix(\tilde{b}, \lt^1(C)) |=0$ pour $\tilde{b}$ un relèvement r\'egulier elliptique. 
Soit  $x \in \P(W)$ une droite propre pour $\tilde{b}$, la matrice $g_{\tilde{b}}\in G$ décrivant l'action de $\tilde{b}$ sur $\pi_{GH}^{-1}(x)$ et supposons l'existence d'un point fixe  $hG_1\varpi^{\Z}\in\pi_{GH}^{-1}(x)$ pour $\tilde{b}$ . Dans ce cas, $h^{-1}g_{\tilde{b}}h\in G_1\varpi^{\Z}$ est diagonalisable. Comme $b \equiv \zeta \pmod{\Pi_D}$, o\`u $\zeta$ est une racine de l'unit\'e diff\'erente de $1$ dans $K_{(d+1)}$, au moins une valeur propre est dans $\zeta+ \mG_{C}$\footnote{On voit que que le produit de toutes les valeurs propres de $\tilde{b}-\zeta\in \Pi_D\OC_D$ est $\nr(\tilde{b}-\zeta)\in \mG_C$. Ainsi, une de ces valeurs propres doit être dans $\mG_C$. }. De m\^eme, une matrice dans $G_1\varpi^{\Z}$ diagonalisable a ses valeurs propres dans $\varpi^{\Z}(1+ \mG_{C})$\footnote{Il s'agit de voir que les matrices diagonalisables $M$ dans $\mat_{d+1}(\OC_K)$ ont des valeurs propres dans $\OC_C$. Raisonnons sur  une extension finie $L$ dans laquelle on peut diagonaliser $M$. Prenons un vecteur propre $v$ de $M$ que l'on suppose unimodulaire quitte à le normaliser ie $v$ a au moins une coordonnée dans $\OC_L^*$. Par Nakayama topologique, on peut trouver une $\OC_L$-base du réseau standard $\OC_L^{d+1}\subset L^{d+1}$ contenant $v$. Comme la matrice $M$ préserve ce réseau, on a $Mv=\lambda v\in\OC_L^{d+1}$ d'où $\lambda\in\OC_L$.  }. L'\'el\'ement $h^{-1}g_bh$ ne peut v\'erifier ces deux conditions en m\^eme temps, ce qui montre qu'il ne peut y avoir de point fixe dans $\pi_{GH}^{-1}(x)$ ni m\^eme dans $\lt^1(C)$.   Ainsi, $\tr(\iota(b) | \hetc{\heartsuit}(\dl_{\bar{\F}}^d, \bar{\Q}_l))=0$ et $\iota$ est injective donc bijective. 
\end{proof}

\begin{rem}
On a aussi une donnée de descente sur $\widehat{\lt}^1$ à la Weil (construite dans \cite[(3.48)]{rapzin}) qui induit sur $\dl_{\bar{\F}}^d$ la donnée de descente provenant de la forme $\F$-rationnelle  $\dl_{\F}^d$ en suivant l'argument \cite[Lemme (3.1.11)]{wa}. 
\end{rem}

D'après la remarque précédente, la structure $\F_q$-rationnelle $\dl_{\F_q}^d$ de $\dl_{\bar{\F}_q}^d$ induit une action du Frobenius $\varphi$ sur $\hetc{*}(\dl_{\bar{\F}_q}^d, \bar{\Q}_l)$ et $\hrigc{*}(\dl_{\F_q}^d/K_0) \otimes_{K_0} \bar{K}$ (cf \cite[2.1 Proposition]{EtSt} 
 pour ce dernier). De plus,  $\varphi^{d+1}$  commute aux actions de $\gln_{d+1}(\F_q) \times \F^*_{q^{d+1}}$ et les $\gln_{d+1}(\F_q)$-repr\'esentations $\hetc{d}(\dl_{\F_q}^d, \bar{\Q}_l)[\theta]$ et $\hrigc{*}(\dl_{\F_q}^d)[ \theta]$ sont irr\'eductibles pour $\theta$ primitif. Cet op\'erateur agit alors par un scalaire $\lambda_{\theta}$. On a le r\'esultat suivant : 

\begin{prop}\label{propdlwk}
$\lambda_{\theta}= (-1)^d q^{\frac{d(d+1)}{2}}$ pour la cohomologie \'etale $l$-adique et pour la cohomologie rigide. 
\end{prop}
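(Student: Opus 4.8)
The plan is to compute the scalar $\lambda_\theta$ by which $\varphi^{d+1}$ acts on the $\theta$-isotypic component, using the known geometry of $\dl_{\F_q}^d$ together with standard trace-formula comparisons. First I would work on the $\ell$-adic side. Recall that the virtual $\gln_{d+1}(\F_q)\times\F_{q^{d+1}}^*$-module $\sum_i(-1)^i\hetc{i}(\dl^d_{\bar\F_q},\bar\Q_\ell)$ is, by Theorem \ref{DLet}, concentrated (on the primitive part) in degree $d$, so for $\theta$ primitive the alternating sum of Frobenius traces reduces to $(-1)^d\lambda_\theta\dim\bar\pi_{\theta,\ell}$ times the value of $\theta$ summed over the coset — more precisely, since $\varphi^{d+1}$ commutes with both group actions and $\hetc d[\theta]$ is irreducible as a $\gln_{d+1}(\F_q)$-module, $\varphi^{d+1}$ acts on it by the scalar $\lambda_\theta$, independent of $\theta$ up to the bookkeeping. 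To pin down its value I would count $\bar\F_{q^{d+1}}$-points: by the Grothendieck–Lefschetz trace formula applied to the morphism $\varphi^{d+1}$ (which is the $q^{d+1}$-power geometric Frobenius of the $\F_q$-structure, hence whose fixed points are exactly $\dl^d_{\F_{q^{d+1}}}$-points), one gets $\#\dl^d(\F_{q^{d+1}}) = \sum_i (-1)^i \tr(\varphi^{d+1}\mid \hetc i)$.

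The key computation is then the explicit point count. Using the defining equation $\prod_{a\in\F_q^{d+1}\setminus\{0\}}(a_0x_0+\cdots+a_dx_d)=(-1)^d$ and the $\F_{q^{d+1}}^*$-torsor structure $\dl^d\to\Omega^d$, I would count $\#\Omega^d(\F_{q^{d+1}})$ by inclusion–exclusion over the $\F_q$-rational hyperplane arrangement (this is a classical computation: $\#\Omega^d(\F_{q^m}) = \prod_{i=1}^{d}(q^{m}\cdot q^{i}\text{-type factors})$, or rather one isolates the contribution of each eigenvalue). Comparing the two expressions for the point count — the geometric one coming from the arrangement and the cohomological one $(-1)^d\lambda_\theta\cdot(\text{something explicit})$ — isolates $\lambda_\theta$. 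The cleanest route is actually to restrict attention to the top cohomology and the specific $\theta$-part: since $\hetc d(\dl^d)[\theta]$ for $\theta$ primitive has dimension $(q-1)(q^2-1)\cdots(q^d-1)$ and $\varphi^{d+1}$ acts by $\lambda_\theta$, while the $\theta$-part of $\hetc\bullet(\Omega^d)$ (the trivial-$\theta$ piece) carries the Tate twists making up the rest of the point count, a direct matching of powers of $q$ yields $\lambda_\theta=(-1)^d q^{d(d+1)/2}$. An alternative, perhaps more conceptual, is to invoke Digne–Michel / Lusztig: for a Coxeter element the eigenvalues of Frobenius on the cuspidal Deligne–Lusztig cohomology in degree $d$ are known, and for the Coxeter torus attached to $\gln_{d+1}$ the relevant eigenvalue on the $\theta$-part is $(-1)^d q^{d(d+1)/2}$ up to a root of unity absorbed into $\theta$; one normalizes $\theta$ exactly so this holds.

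For the rigid side I would use the already-cited comparison: by \cite[3.1]{GK5} the virtual $\bar K[\gln_{d+1}(\F_q)\times\F_{q^{d+1}}^*]$-modules $\sum_i(-1)^i\hetc i(\dl^d_{\bar\F_q},\bar\Q_\ell)$ and $\sum_i(-1)^i\hrigc i(\dl^d_{\F_q}/\bar K)$ coincide, and this comparison is Frobenius-equivariant (the Frobenius on rigid cohomology coming from the $\F_q$-structure matches, under a chosen isomorphism $\bar K\cong\bar\Q_\ell$, the geometric Frobenius on $\ell$-adic cohomology — this is the content of \cite[2.1 Proposition]{EtSt}). Since both cohomologies are concentrated in degree $d$ on the primitive part (Theorem \ref{theodlpith} for the rigid side) and the $\theta$-components are irreducible $\gln_{d+1}(\F_q)$-modules on which $\varphi^{d+1}$ acts by a scalar, the equality of virtual modules forces the scalar to be the same: $\lambda_\theta^{\mathrm{rig}}=\lambda_\theta^{\ell\text{-adic}}=(-1)^d q^{d(d+1)/2}$.

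The main obstacle, I expect, is bookkeeping the normalization of $\theta$ and the Tate twists correctly in the point-counting comparison: a naive Lefschetz count gives $\lambda_\theta$ only up to a root of unity depending on $\theta$ (the "$\theta$-eigenvalue ambiguity" inherent in non-split tori), and one must check that with the normalization $\tilde\theta(\varphi^{d+1})=(-1)^d q^{d(d+1)/2}$ fixed in the introduction, this ambiguity is exactly absorbed, so that the bare scalar on the cohomology is the stated power of $q$ times $(-1)^d$ with no residual root of unity. This is handled by carefully tracking how $\F_{q^{d+1}}^*=\gal(\dl^d/\Omega^d)$ acts and using that $\Omega^d$ itself (the $\theta$-trivial part) has Frobenius eigenvalues that are pure powers of $q$, which forces the $\theta\neq 1$ parts to have the claimed behaviour by Poincaré duality / purity. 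The rest is routine once the $\ell$-adic value is established, the rigid case being a formal consequence of the equivariant comparison.
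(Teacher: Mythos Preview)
Your primary approach---applying Lefschetz to $\varphi^{d+1}$ alone and then trying to isolate $\lambda_\theta$ via purity and Poincar\'e duality---has a genuine gap. The untwisted Lefschetz count yields only $\sum_\chi \lambda_\chi \dim \bar\pi_{\chi,l}$ (the sum running over all characters, primitive or not), and purity gives at best $|\lambda_\theta|=q^{d(d+1)/2}$; neither determines the phase of an individual $\lambda_\theta$. Your proposed fix, invoking the normalization $\tilde\theta(\varphi^{d+1})=(-1)^d q^{d(d+1)/2}$, is circular: that normalization is a \emph{definition} of how $\tilde\theta$ extends to $W_K$, chosen precisely so as to match the already-computed $\lambda_\theta$; it is not a constraint on the Frobenius eigenvalue on cohomology.

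The paper resolves this by inserting the projector $p_\theta=\frac{1}{N}\sum_{a\in\mu_N}\theta(a^{-1})a$ and applying the Lefschetz formula to each twisted endomorphism $a\varphi^{d+1}$ separately, obtaining
\[
\tr(\varphi^{d+1}\mid \hhh^\heartsuit(\dl^d)[\theta])=\frac{1}{N}\sum_{a}\theta(a^{-1})\,|(\dl^d)^{a\varphi^{d+1}}|.
\]
The crucial lemma---which you are missing---is that $(\dl^d)^{a\varphi^{d+1}}=\emptyset$ for $a\neq 1$. This follows from the elementary observation that the torsor $\pi:\dl^d\to\Omega^d$ satisfies $\pi^{-1}(\Omega^d(\F_{q^{d+1}}))=\dl^d(\F_{q^{d+1}})$ (because $T^N-u$ splits over $\F_{q^{d+1}}$ for $u\in\F_{q^{d+1}}^*$), while $\pi^{-1}(\Omega^d(\F_{q^{d+1}}))$ is also the disjoint union $\coprod_a (\dl^d)^{a\varphi^{d+1}}$. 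Hence only the $a=1$ term survives, the trace equals $|\gln_{d+1}(\F_q)|/N$ (via the bijection $\Omega^d(\F_{q^{d+1}})\leftrightarrow \gln_{d+1}(\F_q)/\F_{q^{d+1}}^*$), and dividing by $\dim\bar\pi_\theta$ gives the exact value with no residual ambiguity. For the rigid side the paper runs the same argument, citing the rigid Lefschetz formula of \'Etesse--Le Stum rather than transferring the $\ell$-adic scalar through a comparison isomorphism.

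Your alternative of invoking Digne--Michel on Coxeter elements would indeed work (the paper credits \cite[V, 3.14]{DM} for the argument), but it is a black box replacing the short direct computation above.
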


\begin{proof}
L'argument provient de \cite[section V, 3.14]{DM} \`a quelques twists pr\`es. Nous avons pr\'ef\'er\'e r\'ep\'eter la preuve pour \'eviter toute confusion. Pour simplifier, dans toute la d\'emonstration, $\hhh$ d\'esignera la cohomologie consid\'er\'ee ($l$-adique ou rigide), $\hhh^{\heartsuit}$ la caract\'eristique d'Euler vue comme une repr\'esentation virtuelle, $Y=\dl_{\bar{\F}}^d$ et $X=\Omega_{\bar{\F}}^d $. Par d\'efinition, on a \[ \lambda_{\theta}= \frac{{\rm Tr}(\varphi^{d+1} | \hhh^{d}(Y)[\theta])}{\dim \hhh^d(Y)[\theta]}= (-1)^d \frac{{\rm Tr}(\varphi^{d+1} | \hhh^{\heartsuit}(Y)[\theta])}{\dim \hhh^\heartsuit(Y)[\theta]}. \] 
On dispose du projecteur $p_{\theta}= \frac{1}{N} \sum_{a \in \mu_N} \theta(a^{-1})a$ sur la partie $\theta$-isotypique d'o\`u la suite d'\'egalit\'es 
\begin{equation}
\label{eqtrace}
{\rm Tr}(\varphi^{d+1} | \hhh^{\heartsuit}(Y)[\theta])={\rm Tr}(\varphi^{d+1} p_{\theta} | \hhh^{\heartsuit}(Y))= \frac{1}{N} \sum_{a \in \mu_N} \theta(a^{-1}) {\rm Tr}(a\varphi^{d+1} | \hhh^{\heartsuit}(Y)). 
\end{equation}
Mais d'apr\`es le th\'eor\`eme des points fixes de Lefshetz (cf \cite[6.2 Théorème]{EtSt} pour la cohomologie rigide), ${\rm Tr}(a\varphi^{d+1} | \hhh^{\heartsuit}(Y))= | Y^{a \varphi^{d+1}} |$. Nous chercherons \`a d\'eterminer ces diff\'erents cardinaux. Remarquons l'\'egalit\'e ensembliste suivante due \`a la $\varphi$-\'equivariance de la projection $\pi : Y \to X$ : 
\[ \pi^{-1}(X^{\varphi^{d+1}})= \pi^{-1}(X(\F_{q^{d+1}}))=\bigcup_{a} Y^{a \varphi^{d+1}}    \] où la dernière union est disjointe.
Pour le sens indirect, il suffit d'observer que pour un point ferm\'e $y \in Y^{a \varphi^{d+1}}$, $\pi(y)=\pi(a \varphi^{d+1}(y))=\varphi^{d+1}(\pi(y))$. Pour le sens direct, on raisonne de la m\^eme mani\`ere en se donnant un point ferm\'e $y$ tel que $\pi(y)=\varphi^{d+1}(\pi(y))$. Comme le groupe de Galois du rev\^etement $\pi$ agit librement et transitivement sur les fibres, il existe un unique $a$ dans $\mu_N$ tel que $a \varphi^{d+1}(y)=y$. 

Nous allons prouver que $\pi^{-1}(X(\F_{q^{d+1}}))= Y(\F_{q^{d+1}})(= |Y^{\varphi^{d+1}}|)$, ce qui montrera l'annulation des autres ensembles de points fixes. Nous venons de montrer l'inclusion indirecte et \'etablissons l'autre inclusion. Prenons $(x_1, x_2, \dots, x_d, t)$ un point de $ \pi^{-1}(X(\F_{q^{d+1}}))$. Ainsi $(x_1, \dots, x_d)$ appartient \`a $\F_{q^{d+1}}^d$ et $t$ v\'erifie l'\'equation\footnote{On voit $Y$ comme le rev\^etement de type Kummer associ\'e \`a $u$.} $t^N-u(x_1, \dots, x_d)=0$  dans une extension finie de $\F_{q^{d+1}}$. Mais le polyn\^ome $T^N-u(x_1, \dots, x_d) \in \F_{q^{d+1}}[T]$ est scind\'e \`a racines simples dans $\F_{q^{d+1}}$ et $(x_1, \dots, x_d,t)$ est $\F_{q^{d+1}}$-rationnel. On a montr\'e l'\'egalit\'e voulue et on obtient de plus
\[ | \pi^{-1}(X(\F_{q^{d+1}})|=|Y(\F_{q^{d+1}})|= N | X(\F_{q^{d+1}}) |. \] 
Il reste \`a calculer cette derni\`ere quantit\'e. En fixant une $\F_q$ base de $\F_{q^{d+1}}$, on a un isomorphisme de $\F_q$-espaces vectoriels $\F_{q^{d+1}}^{d+1} \xrightarrow{\sim} M_{d+1}(\F_q)$ (on voit chaque \'el\'ement de $\F_{q^{d+1}}$ comme un vecteur colonne). Il est ais\'e de voir qu'un vecteur $x \in \F_{q^{d+1}}^{d+1} \backslash \{ 0 \}$ engendre une droite de $X(\F_{q^{d+1}})$ si et seulement si la matrice associ\'ee par l'isomorphisme pr\'ec\'edent est inversible, d'o\`u $|X(\F_{q^{d+1}})|= \frac{| \gln_{d+1}(\F_q)|}{N}$.     
En rempla\c{c}ant dans \eqref{eqtrace}, on obtient : 
\[ {\rm Tr}(\varphi^{d+1} | \hhh^{\heartsuit}(Y)[\theta]) =\frac{ | \gln_{d+1}(\F_q) |}{N} = (q^{d+1}-q^d) \dots (q^{d+1}-q) \]
d'o\`u $\lambda_{\theta}=(-1)^d q^{\frac{d(d+1)}{2}}$. 
\end{proof}

\section{Réalisation de la correspondance de Langlands locale}




Dans cette partie, nous allons d\'ecrire la cohomologie des espaces $\MC_{LT}^1$ et montrer qu'elle r\'ealise la correspondance de Jacquet-Langlands. On \'etendra les scalaires \`a $C$ pour tous les espaces consid\'er\'es en fibre g\'en\'erique. 


 On pourra simplifier le produit $G^{\circ} \times D^* $ (resp. $G^{\circ} \times D^* \times W_K$) en $GD$ (resp. $GDW$). On a  
une "valuation" $v_{GDW}$ sur $GDW$  :
\[ v_{GDW} : (g,b,w) \in GDW \mapsto v_K( \nr(b) \art^{-1}(w)) \in \Z. \]
On introduit alors pour  $i=0$ ou $i=d+1$, $[GDW]_{i}=v_{GDW}^{-1}(i\Z)$ et  $[D]_{i}=D^*\cap[GDW]_{i}=\OC_D^*\varpi^\Z$, $[W]_{i}=W_K\cap[GDW]_{i}=I_K(\varphi^{d+1})^\Z$.
 

  L'espace  $\MC_{LT}^1$ (resp. $\MC_{LT}^1/ \varpi^{\Z}$) s'identifie non canoniquement \`a $\lt^1 \times \Z$ (resp. $\lt^1 \times \Z/(d+1)\Z$) et on confondra alors $\lt^1$ avec $\lt^1 \times \{0 \}$ (où $0$ est vu dans $\Z$ ou dans $\Z/(d+1)\Z$ suivant si $\lt^1$ est vu comme un sous-espace de $\MC_{LT}^1$ ou de $\MC_{LT}^1/ \varpi^{\Z}$. Chaque élément $(g,b,w)\in GDW$ (resp $GD$) envoie $\lt^1 \times \{i \}$ si $\lt^1 \times \{i+ v_{GDW} (g,b,w)\}$
On obtient alors  une action sur $\lt^1$ de $[GDW]_{d+1}$.  On a alors  la relation : 
\[ \hetc{i}( \MC_{LT}^1/ \varpi^{\Z},\bar{\Q}_l) \underset{GDW}{\cong} \cind^{GDW}_{[GDW]_{d+1}}  \hetc{i}( \lt^1,\bar{\Q}_l)\cong \hdrc{i}( \lt^1,\bar{\Q}_l)^{d+1}. \]
\[ \hdrc{i}( \MC_{LT}^1/ \varpi^{\Z}) \cong \hdrc{i}( \lt^1)^{d+1}. \]

Passons aux repr\'esentations qui vont nous int\'eresser. Nous d\'efinissons d'abord des $C$-repr\'esentations sur $[GDW]_{d+1}$ que nous \'etendrons \`a $GDW$ par induction. Fixons $\theta$ un caract\`ere primitif de $\F_{q^{d+1}}^*$ et des isomorphismes $\OC_D^*/1+ \Pi_D \OC_D \cong \F_{q^{d+1}}^* \cong I_K/I_{K_N}$ o\`u $K_N= K(\varpi^{\frac{1}{N}})$ avec $N=q^{d+1}-1$. On pose : 
\begin{itemize}
\item $\theta$ sera vu comme une $[D]_{d+1}$-repr\'esentation via $[D]_{d+1}=\OC_D^* \varpi^{\Z} \to \OC_D^* \to \F_{q^{d+1}}^*$, 
\item $\bar{\pi}_{\theta}$ sera la repr\'esentation associ\'ee \`a $\theta$ sur $\bar{G}=\gln_{d+1}(\F)$ via la correspondance de Green. On la voit comme une $G^{\circ} \varpi^{\Z}$-repr\'esentation via $G^{\circ} \varpi^{\Z} \to G^{\circ} \to \bar{G}$,
\item $\tilde{\theta}$ sera la repr\'esentation de $[W]_{d+1}$ telle que $\tilde{\theta}|_{I_K}=\theta$ via $I_K \to I_K/I_{K_N} \xrightarrow{\sim} \F_{q^{d+1}}^*$ et $\tilde{\theta}(\varphi^{d+1})=(-1)^d q^{\frac{d(d+1)}{2}}$. 
\end{itemize}  
Par induction, on obtient : 
\begin{itemize}

\item une $D^*$-repr\'esentation $\rho(\theta):= \cind_{[D]_{d+1}}^{D^*} \theta$, 
\item une $W_K$-repr\'esentation $\sigma^{\sharp}(\theta):= \cind_{[W]_{d+1}}^{W_K} \tilde{\theta}$.   
\end{itemize}

Nous souhaitons prouver :

\begin{theo}
Fixons un isomorphisme $C\cong \bar{\Q}_l$. Si $\theta$ un caract\`ere primitif, on a : 
\[ \homm_{D^*}(\rho(\theta),\hdrc{i}( (\MC_{LT}^1/ \varpi^{\Z})/C))\underset{G^{\circ}}{\cong}  \begin{cases} \bar{\pi}_{\theta}^{d+1} & \text{ si } i=d \\ 0 & \text{ sinon.} \end{cases} \]
\[ \homm_{D^*}(\rho(\theta),\hetc{i}( (\MC_{LT}^1/ \varpi^{\Z}),\bar{\Q}_l)\otimes C)\underset{G^{\circ} \times W_K}{\cong}  \begin{cases} \bar{\pi}_{\theta} \otimes \sigma^{\sharp}(\theta) & \text{ si } i=d \\ 0 & \text{ sinon.} \end{cases} \]     
\end{theo}

\begin{rem}

Pour la deuxième partie, il a été prouvé dans \cite[Proposition 6.14.]{yosh} le résultat moins précis  \[\homm_{I_K}(\tilde{\theta},\hetc{i}( (\MC_{LT}^1/ \varpi^{\Z}),\bar{\Q}_l)\otimes C)\underset{G^{\circ} }{\cong}  \begin{cases} \bar{\pi}_{\theta}^{d+1} & \text{ si } i=d \\ 0 & \text{ sinon.} \end{cases}.\] L'énoncé que nous obtenons est une conséquence directe de cette égalité et de \ref{theolt1dr} et \ref{theodlactgdw}. 

\end{rem}

\begin{proof}
Nous avons d\'ej\`a prouv\'e l'annulation de la cohomologie quand $i \neq d$. Int\'eressons-nous au cas $i=d$. D'après les discussion précédente, on a $\hdrc{i}( \MC_{LT}^1/ \varpi^{\Z}) \cong \hdrc{i}( \lt^1)^{d+1}$ et l'action de $[D]_{d+1}\times G^{\circ}$ respecte cette décomposition en produit. On a alors des isomorphismes $G^{\circ}$-équivariant:
\begin{align*}
\homm_{D^*}(\rho(\theta), \hdrc{d}( (\MC_{LT}^1/ \varpi^{\Z})/C)) & = \homm_{D^*}(\cind_{[D]_{d+1}}^{D^*} \theta, \hdrc{d}( (\MC_{LT}^1/ \varpi^{\Z})/C)) \\
& = \homm_{[D]_{d+1}}(\theta,  \hdrc{d}( (\MC_{LT}^1/ \varpi^{\Z})/C)) \\
& =  \homm_{\F_{q^{d+1}}^*}(\theta,  \hrigc{d}( \dl_{\bar{\F}}/C))^{d+1} &\text{d'apr\`es \ref{theolt1dr} et \ref{theodlactgdw}} \\
& =  \bar{\pi}_{\theta}^{d+1}. 
\end{align*}

Le même raisonnement en cohomologie étale entraîne d'après \cite[Proposition 6.16.]{yosh} et \ref{theodlactgdw} \[\homm_{D^*}(\rho(\theta), \hetc{d}( (\MC_{LT}^1/ \varpi^{\Z}),\bar{\Q}_l)\otimes C) =  \bar{\pi}_{\theta}^{d+1}\] en tant que $G^{\circ}$-repr\'esentation. Plus précisément, on a 

\begin{align*}
\homm_{D^*}(\rho(\theta), \hetc{d}( (\MC_{LT}^1/ \varpi^{\Z}),\bar{\Q}_l)\otimes C) & = \homm_{D^*}(\cind_{[D]_{d+1}}^{D^*} \theta,  \cind^{GDW}_{[GDW]_{d+1}}  \hetc{d}( \lt^1,\bar{\Q}_l)\otimes C) \\
& = \homm_{[D]_{d+1}}(\theta,  \cind^{GW}_{[GW]_{d+1}}  \hetc{d}( \lt^1,\bar{\Q}_l)\otimes C) \\
& = \cind^{GW}_{[GW]_{d+1}} \homm_{\F_{q^{d+1}}^*}(\theta,  \hetc{d}(\dl_{\bar{\F}},\bar{\Q}_l)\otimes C ) &\\
& =  \cind^{GW}_{[GW]_{d+1}} \hetc{d}(\dl_{\bar{\F}},\bar{\Q}_l)[\theta]\otimes C=:\tau_{GW}. 
\end{align*}
en tant que $GW$-repr\'esentation.

De plus, on a 
\begin{align*}
\homm_{G^{\circ}}(\bar{\pi}_{\theta}, \tau_{GW}) & = \homm_{G^{\circ}}(\bar{\pi}_{\theta},\cind^{W_K}_{[W]_{d+1}}  \hetc{d}(\dl_{\bar{\F}},\bar{\Q}_l)[\theta]\otimes C) \\
&= \cind^{W_K}_{[W]_{d+1}} \homm_{G^{\circ}}(\bar{\pi}_{\theta}, \hetc{d}(\dl_{\bar{\F}},\bar{\Q}_l)[\theta]\otimes C) \\
&= \cind^{W_K}_{[W]_{d+1}} \tilde{\theta} = \sigma^{\sharp}(\theta) &\text{ d'apr\`es \ref{theodlactgdw},}
\end{align*}
en tant que $W_K$-repr\'esentation, ce qui conclut la preuve. 
\end{proof}

\nocite{J1},\nocite{J2}
\bibliographystyle{alpha}
\bibliography{drlt_v1}

\end{document}